\newtheorem{thm}{Theorem} [section]
\newtheorem{lem}[thm]{Lemma}
\newtheorem{prop}[thm]{Proposition}
\newtheorem{conj}[thm]{Conjecture}
\theoremstyle{definition}
\newtheorem{example}[thm]{Example}
\newtheorem{rem}[thm]{Remark}
\numberwithin{equation}{section}
\newcommand{\A}{\mathcal A}
\newcommand{\D}{\mathcal{D}}
\newcommand{\End}{{\mathrm{End}}}
\newcommand{\HH}{\mathcal H}
\newcommand{\Hom}{{\mathrm{Hom}}}
\newcommand{\Z}{{\mathbb Z}}
\newcommand{\nc}{\newcommand}
\nc{\browntext}[1]{\textcolor{brown}{#1}}
\nc{\greentext}[1]{\textcolor{green}{#1}}
\nc{\redtext}[1]{\textcolor{red}{#1}}
\nc{\bluetext}[1]{\textcolor{blue}{#1}}
\nc{\brown}[1]{\browntext{ #1}}
\nc{\green}[1]{\greentext{ #1}}
\nc{\red}[1]{\redtext{ #1}}
\nc{\blue}[1]{\bluetext{ #1}}
\title[Cells in modified $\imath$quantum groups of type AIII and related Schur algebras]
{Cells in modified $\imath$quantum groups of type AIII and related Schur algebras}
\author[Weideng Cui]{Weideng Cui}
\address{School of Mathematics, Shandong University, Jinan, Shandong 250100, China}
\email{cwdeng@amss.ac.cn}
\keywords{Cells, modified $\imath$quantum groups, $\jmath$-Schur algebras}
\subjclass[2010]{Primary 17B10}
\begin{document}

\begin{abstract}
We provide a combinatorial characterization of two-sided cells in modified $\imath$quantum groups of type AIII. Our approach is to lift a corresponding description of two-sided cells in $\jmath$-Schur algebras associated to Iwahori--Hecke algebras of type $B$. We further give a combinatorial description of two-sided cells in other two kinds of Schur-type algebras, namely $\imath$-Schur algebras and $\tilde{\imath}$-Schur algebras.
\end{abstract}

\maketitle
\setcounter{tocdepth}{1}
\tableofcontents

\section{Introduction}
\subsection{}
In the study of primitive ideals in the enveloping algebra of a complex semisimple Lie algebra, Joseph \cite{Jo77} has defined the concept of left, right and two-sided cells in the Weyl group $W$. Joseph showed that each left cell can naturally carry a representation of $W$, and the definition of left cells and the corresponding Weyl group representations involved some unknown quantities: the multiplicities in the Jordan--H\"{o}lder series of the Verma modules with highest weight. Kazhdan and Lusztig \cite{KL79} have proposed a conjectural algorithm for these multiplicities, which was soon proved by Beilinson--Bernstein \cite{BB81} and Brylinski--Kashiwara \cite{BK81}, respectively; moreover, they defined left, right and two-sided cells for an arbitrary Coxeter group through the so-called Kazhdan--Lusztig basis. Kazhdan and Lusztig's definition of cells has the advantage that it gives rise not only to representations of the Coxeter group, but also of the corresponding Iwahori--Hecke algebra. Later on, Barbasch and Vogan \cite{BV82, BV83} determined explicitly the equivalence relation $E \sim_{LR} E'$ on irreducible representations of $W$ defined by the requirement that $E, E'$ appear in the same two-sided cell.

In \cite[\S29.4.2]{Lu10}, Lusztig gave the definition of left, right and two-sided cells for an associative algebra with a given basis; in \cite[Part IV]{Lu10} and also \cite{Lu95}, Lusztig has studied the structure of cells in the modified quantum group with respect to the canonical basis. In \cite{Du96}, Du has given a combinatorial description of two-sided cells in the $q$-Schur algebra of type $A$ with respect to its Kazhdan--Lusztig type basis, in terms of which he also gave a classification of two-sided cells in the modified quantum group of type $A$. In \cite{McG03}, McGerty has obtained an explicit description of two-sided cells in the modified quantum group of affine type $A$ via first investigating the structure of cells in affine $q$-Schur algebras of type $A$.

\subsection{}
Given an involution $\theta$ on a complex simple Lie algebra $\mathfrak{g}$, we can obtain a symmetric pair $(\mathfrak{g}, \mathfrak{g}^{\theta})$, or a pair of enveloping algebras $(\mathbf{U}(\mathfrak{g}), \mathbf{U}(\mathfrak{g}^{\theta}))$, where $\mathfrak{g}^{\theta}$ denotes the fixed point subalgebra of $\mathfrak{g}$ under $\theta$. The classification of symmetric pairs of finite type is closely related to that of real simple Lie algebras, which can be described in terms of the Satake diagrams. In the 1990's, Noumi, Sugitani and Dijkhuizen \cite{NS95, N96, NDS97}, based on solutions to the reflection equations, constructed quantum group analogs of $\mathbf{U}(\mathfrak{g}^{\theta})$ as coideal subalgebras of $\mathbf{U}_{q}(\mathfrak{g})$ for all $\mathfrak{g}$ of classical type, where $\mathbf{U}_{q}(\mathfrak{g})$ is the quantized enveloping algebra of $\mathfrak{g}$. Independently, Letzter \cite{Le99, Le02} systematically developed the theory of quantum symmetric pairs $(\mathbf{U}_{q}(\mathfrak{g}), \mathbf{U}^{\imath})$ of finite type as a quantization of $(\mathbf{U}(\mathfrak{g}), \mathbf{U}(\mathfrak{g}^{\theta}))$. The algebra $\mathbf{U}^{\imath}$ will be referred to as an $\imath$quantum group. Kolb \cite{Ko14} has further studied and generalized Letzter's theory to the Kac--Moody type.

In recent years, Bao and Wang \cite{BW18} have initiated a new theory of the canonical basis (called $\imath$-canonical basis) arising from quantum symmetric pairs of type AIII without black nodes; as an application, they established for the first time a Kazhdan--Lusztig theory for the BGG category $\mathcal{O}$ of the ortho-symplectic Lie superalgebras $\mathfrak{o}\mathfrak{s}\mathfrak{p}(2m+1|2n)$ (see also \cite{B17} for the case of $\mathfrak{o}\mathfrak{s}\mathfrak{p}(2m|2n)$). Simultaneously and independently from \cite{BW18}, Ehrig and Stroppel \cite{ES18} have discovered the connections between coideal subalgebras of type AIII and the parabolic category $\mathcal{O}$ of type $D$; in particular, they gave a categorification of the coideal subalgebras and (quantum) skew Howe duality. Subsequently, Bao and Wang have developed a general theory of $\imath$-canonical bases for modified $\imath$quantum groups arising from quantum symmetric pairs of arbitrary finite type in \cite{BW18b} and of Kac--Moody type in \cite{BW21}, respectively.

In \cite{BKLW18} the authors defined a new Schur-type algebra $S^{\jmath}(n, d)$, which is attached to the Iwahori--Hecke algebra $\HH$ of type $B_d$ with equal parameters and called the $\jmath$-Schur algebra, together with a canonical basis. Via a stabilization procedure as $d$ varies, these algebras give rise to a limit algebra which was shown to be isomorphic to the idempotented $\imath$quantum group $\dot{\mathbf{U}}^{\jmath}(\mathfrak{g}\mathfrak{l}_{n})$. From the process, they established a natural surjective algebra homomorphism from $\dot{\mathbf{U}}^{\jmath}(\mathfrak{g}\mathfrak{l}_{n})$ to $S^{\jmath}(n, d)$. Based on these, Li and Wang \cite{LiW18} further defined and studied the modified $\imath$quantum group $\dot{\mathbf{U}}^{\jmath}(\mathfrak{s}\mathfrak{l}_{n})$ as well as its $\jmath$-canonical basis (\cite[Theorem 5.5]{LiW18}), which has many remarkable properties. For example, the $\jmath$-canonical basis has positive structure constants, and it is almost orthonormal and admits positivity with respect to a bilinear form on $\dot{\mathbf{U}}^{\jmath}(\mathfrak{s}\mathfrak{l}_{n})$. Besides, there also exists a natural surjective homomorphism from $\dot{\mathbf{U}}^{\jmath}(\mathfrak{s}\mathfrak{l}_{n})$ to $S^{\jmath}(n, d)$, which sends canonical basis elements to canonical basis elements or zero as proven in \cite{BSWW18}. Later on, Lai and Luo \cite{LL21} have studied the $\jmath$-Schur algebras associated to Iwahori--Hecke algebras of type $B/ C$ with unequal parameters. Moreover, the authors \cite{FLLLW20, FLLLW22} have given two constructions of the modified $\imath$quantum groups associated to some coideal subalgebras of quantum groups of affine type $A$ via a geometric and algebraic approach, respectively.

In recent years, the $\imath$quantum groups have been studied through various approaches (see \cite{W21} for a survey).

\subsection{}
In this paper, we would like to give a characterization of two-sided cells in the modified $\imath$quantum group $\dot{\mathbf{U}}^{\jmath}(\mathfrak{s}\mathfrak{l}_{n})$ with respect to the $\jmath$-canonical basis defined by Li and Wang. In order to do this, we first give a combinatorial description of two-sided cells in $S^{\jmath}(n, d)$ with respect to its canonical basis, and then lift this description to $\dot{\mathbf{U}}^{\jmath}(\mathfrak{s}\mathfrak{l}_{n})$ via the algebra homomorphism mentioned above.


This paper has provided the first step towards understanding the structure of cells in modified $\imath$quantum groups. Since the $\imath$-canonical basis exists for modified $\imath$quantum groups of general type by Bao and Wang's work, we are planning to investigate the corresponding structure of cells in a future publication.

The paper is organized as follows. In Section 2, we recall the definition of cells for an arbitrary associative algebra with a given basis following Lusztig.

In Section 3, we first present a classification of left, right and two-sided cells in $S^{\jmath}(n, d)$ in terms of the ones in $\HH$. Then we provide a combinatorial description of two-sided cells in $S^{\jmath}(n, d)$, using that of two-sided cells in $\HH$ due to Barbasch and Vogan and also a result of Du. Finally, we state a conjecture on the number of left cells in a two-sided cell of $S^{\jmath}(n, d)$.

In Section 4, we lift the combinatorial description of two-sided cells in $S^{\jmath}(n, d)$ to give a characterization of two-sided cells in the modified $\imath$quantum group $\dot{\mathbf{U}}^{\jmath}(\mathfrak{s}\mathfrak{l}_{n})$.

In Section 5, we focus on another Schur-type algebra associated to $\HH$, namely the $\imath$-Schur algebra $S^{\imath}(n, d)$. Parallel to Section 3 we formulate a combinatorial description of two-sided cells in $S^{\imath}(n, d)$.

In Section 6, we consider the $\tilde{\imath}$-Schur algebra $\tilde{S}^{\imath}(n, d)$ attached to $\mathcal{H}_{C_d}^{1}$, where $\mathcal{H}_{C_d}^{1}$ is the specialization at $p = 1$ of the Iwahori--Hecke algebra $\mathcal{H}_{C_d}^{p}$ of type $C_d$ with unequal parameters $p$ and $q$. We give an approach to determining whether or not two canonical basis elements of $\tilde{S}^{\imath}(n, d)$ lie in the same two-sided cell.




\vspace{2mm}
\noindent {\bf Acknowledgement.}
The work was initiated during my visit to University of Virginia in 2018-2019 under the guidance of Professor Weiqiang Wang. I would like to express my great gratitude to him for introducing me to the interesting problem, for insightful discussions and for helpful comments on my drafts. For example, the conjecture in Remark \ref{remark-tilde-Schur} and the statements in Remark \ref{affine-type-based module maps} are all proposed by him. I thank Professors Yiqiang Li and C. Stroppel for many helpful comments after posting the preprint on arXiv. I also thank an anonymous referee for many helpful suggestions. I thank University of Virginia for hospitality and support. I also acknowledge the support from China Scholarship Council. The author is partially supported by Young Scholars Program of Shandong University, Shandong Provincial Natural Science Foundation (Grant No. ZR2021MA022) and the NSF of China (Grant No. 11601273).


\section{Definition of cells}
Let $q$ be an indeterminate and set $\A=\Z[q,q^{-1}]$. Assume that $R$ is an arbitrary ring. Let $\mathfrak{A}$ be an associative algebra over $R$ and $\mathbb{B}$ a basis of $\mathfrak{A}$
as an $R$-module. We do not assume that $\mathfrak{A}$ has $1$. The structure constants $h_{a, b}^{c}\in R$ of $\mathfrak{A}$ (where $a, b, c\in \mathbb{B}$) are defined by $ab=\sum\limits_{c\in \mathbb{B}}h_{a, b}^{c}c$. Then $\mathbb{B}$ is divided into {\em cells}, by Lusztig \cite[\S29.4.2]{Lu10}, via the equivalence relations on $\mathbb{B}$ defined as follows.

For $b, b'\in \mathbb{B}$, we write $b'\leftarrow_{L} b$ (resp. $b'\leftarrow_{R} b$) if there exists an element $c\in \mathbb{B}$ such that the coefficient of $b'$ is nonzero when expanding $cb$ (resp. $bc$); we say that $b'\preceq_{L} b$ (resp. $b'\preceq_{R} b$) if there exists a sequence $b_1=b, b_2,\ldots, b_n=b'$ in $\mathbb{B}$ such that $b_{i+1}\leftarrow_{L} b_i$ (resp. $b_{i+1}\leftarrow_{R} b_i$) for any $i=1,\ldots, n-1$. We say that $b'\preceq_{LR} b$ if there is a sequence $b_1=b, b_2,\ldots, b_n=b'$ in $\mathbb{B}$ such that for any $i\in \{1,\ldots, n-1\}$ we have either $b_{i+1}\leftarrow_{L} b_i$ or $b_{i+1}\leftarrow_{R} b_i$. Clearly $\preceq_{L}$, $\preceq_{R}$, $\preceq_{LR}$ are preorders on $\mathbb{B}$. For $\star\in \{L, R, LR\}$ and $b, b'\in \mathbb{B}$, we say that $b\sim_{\star}b'$ if $b\preceq_{\star} b'$ and $b'\preceq_{\star} b$. Then $\sim_{L}$, $\sim_{R}$, $\sim_{LR}$ give rise to equivalence relations on $\mathbb{B}$; the corresponding equivalence classes are called {\em left, right} and {\em two-sided cells} of $\mathfrak{A},$ respectively.

The following lemma provides another characterization of these preorders.

\begin{lem}\label{another-charact-cells}
Assume that $R=\A$ and all $h_{a, b}^{c}\in \mathbb{N}[q, q^{-1}]$. Then $b'\preceq_{L} b''$ (resp. $b'\preceq_{R} b''$; $b'\preceq_{LR} b''$) if and only if there exists $\beta$ (resp. $\beta'$; $\beta, \beta'$) in $\mathbb{B}$ such that $e_{b'}\neq 0$ (resp. $f_{b'}\neq 0$; $g_{b'}\neq 0$), where $e_{b'}$ (resp. $f_{b'}$; $g_{b'}$) is defined by
\begin{align*}
\beta b''=\sum_{c\in \mathbb{B}}e_{c}c, \quad e_{c}\in \A,
\end{align*}
\begin{align*}
(resp.~ b''\beta'=\sum_{c\in \mathbb{B}}f_{c}c, \quad f_{c}\in \A;\quad \beta b''\beta'=\sum_{c\in \mathbb{B}}g_{c}c, \quad g_{c}\in \A).
\end{align*}
\end{lem}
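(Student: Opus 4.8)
The plan is to prove the statement for $\preceq_L$ in full, and then to obtain the $\preceq_R$ case by the evident left-right symmetry and the $\preceq_{LR}$ case by a small variant. One direction costs nothing: if some $\beta\in\mathbb{B}$ has $e_{b'}\neq 0$ in $\beta b''=\sum_c e_c c$, then $b'\leftarrow_L b''$ by definition, whence $b'\preceq_L b''$ via the length-one chain. No hypothesis on the structure constants is used here, and the absence of a unit in $\mathfrak{A}$ is harmless, since everything in sight is an honest product of basis elements.

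For the forward direction I would unwind the definition: $b'\preceq_L b''$ provides a chain $b''=b_1,b_2,\ldots,b_n=b'$ in $\mathbb{B}$ with $b_{i+1}\leftarrow_L b_i$ for each $i$, and for each step I choose $c_i\in\mathbb{B}$ so that the coefficient of $b_{i+1}$ in $c_i b_i$ is nonzero. By the positivity hypothesis this coefficient is a nonzero element of $\N[q,q^{-1}]$. I would then consider the iterated left product $c_{n-1}c_{n-2}\cdots c_1 b''$ and track the coefficient of $b'$ by induction on $i$, the inductive claim being that the coefficient of $b_{i+1}$ in $c_i(c_{i-1}\cdots c_1 b'')$ is a nonzero element of $\N[q,q^{-1}]$. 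The crucial point, and the only place the positivity assumption does real work, is that no cancellation can occur: that coefficient receives the contribution (coefficient of $b_{i+1}$ in $c_i b_i$) $\cdot$ (coefficient of $b_i$ in $c_{i-1}\cdots c_1 b''$), a product of two nonzero elements of $\N[q,q^{-1}]$, while every other contribution is likewise non-negative and can only enlarge it. Iterating down the chain yields that the coefficient of $b'=b_n$ in $c_{n-1}\cdots c_1 b''$ is a nonzero element of $\N[q,q^{-1}]$.

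It then remains to trade the product $c_{n-1}\cdots c_1$ for a single basis element. Expanding in $\mathbb{B}$ gives $c_{n-1}\cdots c_1=\sum_{\beta\in\mathbb{B}}a_\beta\beta$ with every $a_\beta\in\N[q,q^{-1}]$, again by repeated use of positivity. Hence the coefficient of $b'$ in $(c_{n-1}\cdots c_1)b''=\sum_\beta a_\beta(\beta b'')$ equals $\sum_\beta a_\beta e^{(\beta)}_{b'}$, where $e^{(\beta)}_{b'}$ is the coefficient of $b'$ in $\beta b''$. This is a sum of non-negative terms that we have just shown to be nonzero, so some summand is nonzero and the corresponding $\beta$ satisfies $e^{(\beta)}_{b'}\neq 0$, as required. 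For $\preceq_{LR}$ the chain interleaves $\leftarrow_L$ and $\leftarrow_R$ steps; here I would additionally use that left- and right-multiplication operators commute by associativity, $c(xc')=(cx)c'$, so the composite may be rewritten as $(\prod c_i)\,b''\,(\prod c'_j)$, after which expanding both products in $\mathbb{B}$ and repeating the no-cancellation argument produces the required pair $\beta,\beta'$ with $g_{b'}\neq 0$. I do not anticipate a genuine obstacle beyond the inductive bookkeeping in the no-cancellation step; the whole lemma rests on the single observation that $\N[q,q^{-1}]$-positivity forbids cancellation, and it is precisely this that allows the transitive preorder $\preceq_L$ to be realized by one multiplication.
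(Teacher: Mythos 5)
Your handling of $\preceq_L$ is precisely the paper's proof: take the chain $b''=b_1,\dots,b_n=b'$ with witnesses $c_i$, use $\N[q,q^{-1}]$-positivity to see that the coefficient of $b'$ in $c_{n-1}\cdots c_1b''$ equals $h_{c_1,b''}^{b_2}\cdots h_{c_{n-1},b_{n-1}}^{b'}$ plus a nonnegative remainder, hence is nonzero, and then expand $c_{n-1}\cdots c_1=\sum_\beta a_\beta\beta$ with all $a_\beta\in\N[q,q^{-1}]$ to extract a single $\beta$ with $e^{(\beta)}_{b'}\neq 0$. You are in fact more explicit than the paper, which writes out only the $\preceq_L$ case, compresses the final extraction of $\beta$ into a single ``therefore,'' and leaves $\preceq_R$ and $\preceq_{LR}$ entirely to the reader.

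There is, however, a genuine hole in your $\preceq_{LR}$ step, at the point where you rewrite the composite as $(\prod c_i)\,b''\,(\prod c'_j)$. An $LR$-chain is allowed to consist solely of $\leftarrow_L$ steps (every $\preceq_L$ relation is in particular a $\preceq_{LR}$ relation), and then $\prod c'_j$ is the \emph{empty} product, which is not an element of $\mathfrak{A}$, since $\mathfrak{A}$ is not assumed to have $1$; your argument then produces a $\beta$ but no $\beta'$, whereas the statement requires both. This cannot be repaired in the stated generality: take $\mathbb{B}=\{x,y,z\}$ with $zx=y$ and all other products of basis elements equal to zero. This algebra is associative, its structure constants lie in $\N$, and $y\preceq_L x$, hence $y\preceq_{LR}x$; yet $\beta x\beta'=0$ for every pair $\beta,\beta'\in\mathbb{B}$, so no pair with $g_y\neq0$ exists. (The same example shows that the reflexive instance $b'=b''$ of the $\preceq_L$ claim fails too: no $\beta$ has $x$ appearing in $\beta x$.) To be fair, this gap is inherited from the lemma and from the paper, whose proof silently ignores it; the statement is really about unital or locally unital based algebras. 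In every algebra to which the paper applies it the issue is harmless: $\HH$ and the Schur-type algebras are unital, and in ${}_{\A}\!\dot{\mathbf{U}}^{\jmath}$ the idempotents $1_\lambda$ are themselves canonical basis elements, so a one-sided chain can be padded: if $e_{b'}\neq0$ in $\beta b''$ and $\beta'\in\mathbb{B}$ satisfies $b'\beta'=b'$, then positivity (no cancellation) gives $g_{b'}\neq 0$ for the pair $(\beta,\beta')$. If you add this padding remark, or assume such idempotents exist in $\mathbb{B}$, your proof is complete and coincides with the paper's.
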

\begin{proof}
The sufficiency is obvious. Assume that $b'\preceq_{L} b''$, by definition there exists a sequence $b_1=b'', b_2,\ldots, b_n=b'$ in $\mathbb{B}$ and a sequence $\beta_1,\ldots, \beta_{n-1}$ in $\mathbb{B}$ such that $h_{\beta_i, b_i}^{b_{i+1}}\neq 0$ for $i=1,\ldots, n-1$. Since all $h_{a, b}^{c}\in \mathbb{N}[q, q^{-1}]$, we see that when expanding $\beta_{n-1}\cdots \beta_{1}b''$, the coefficient of $b'$ is $h_{\beta_1, b''}^{b_{2}}\cdots h_{\beta_{n-1}, b_{n-1}}^{b'}+f(q)$ for some $f(q)\in \mathbb{N}[q, q^{-1}]$, which must be nonzero. Therefore, there exists some $\beta\in \mathbb{B}$ such that the coefficient $e_{b'}$ of $b'$ is nonzero when expanding $\beta b''$.
\end{proof}

\section{Description of two-sided cells in $S^{\jmath}(n, d)$}
\label{c5}
\subsection{Preliminaries}
\label{sec:module}

Fix $d\in \Z_{\geq 1}$. Let $W_{B_d}$ be the Weyl group of type $B_d$ with generators $S=\{s_{0},s_{1}, \ldots, s_{d-1}\}$ and the identity element $e$. Let $\HH=\HH_{B_d}$ be the associated {\em Iwahori--Hecke algebra} over $\A$ with a basis $\{T_{w}\:|\:w\in W_{B_d}\}$ satisfying the following relations:
\begin{align}
\label{Btype-Hecke-alg-rela}
\begin{split}
T_{w}T_{w'}&=T_{ww'}\quad \mathrm{if}~  w, w'\in W_{B_d}~ \mathrm{with}~ \ell(ww')=\ell(w)+\ell(w'),\\
T_{s_i}^{2}&=1+(q-q^{-1})T_{s_i}\quad \mathrm{for}~ 0\leq i\leq d-1,
\end{split}
\end{align}
where $\ell$ is the {\em length function} on $W_{B_d}$ (cf. \cite[\S1.1]{Lu03}).


By \cite[Lemma 4.2]{Lu03} there exists a unique $\mathbb{Z}$-algebra automorphism $\bar{\cdot}:\HH\rightarrow \HH$ given by
\[
\overline{q}=q^{-1} \quad\mbox{and}\quad \overline{T_{w}}=T_{w^{-1}}^{-1}~ (\forall~w\in W_{B_d}).\]

Fix $n=2r+1\in \mathbb{Z}_{\geq 3}$. Let $\mathbb{N}=\{0,1,2,\ldots\}$. Following \cite[(2.1.8)]{LL21} we set
$$\Lambda^{\jmath}(n, d) :=\{\lambda=(\lambda_{i})_{-r\leq i\leq r}\in \mathbb{N}^{n}\:|\:\lambda_{-i}=\lambda_{i}\mathrm{~for~}1\leq i\leq r, ~\sum\limits_{i=-r}^{r}\lambda_{i}=2d+1\}.$$
For each $\lambda\in \Lambda^{\jmath}(n, d)$, let $W_{\lambda}$ be the {\em parabolic subgroup} of $W_{B_d}$ generated by the following simple reflections
$$S\setminus\{s_{\lfloor\frac{\lambda_{0}}{2}\rfloor}, s_{\lfloor\frac{\lambda_{0}}{2}\rfloor+\lambda_1},\ldots,s_{\lfloor\frac{\lambda_{0}}{2}\rfloor+\lambda_1+\cdots+\lambda_{r-1}}\},$$ and set $x_{\lambda} :=\sum_{w\in W_{\lambda}}q^{\ell(w)}T_{w}$.

Following \cite[\S2.3]{BKLW18} and also \cite[(3.1.6)]{LL21} we define the {\em $\jmath$-Schur algebra} of type $B_d$ over $\A$ by
$$S^{\jmath}(n, d)=\End_{\HH}(\bigoplus_{\lambda\in \Lambda^{\jmath}(n, d)}x_{\lambda}\HH).$$

For $\lambda, \mu\in \Lambda^{\jmath}(n, d)$, let $\D_{\lambda\mu}$ (resp. $\D_{\lambda\mu}^{+}$) be the set of minimal (resp. maximal) length double coset representatives in $W_\lambda\backslash W_{B_d}/W_\mu$. Set
\begin{equation*}   \label{eq:Xi}
\Xi :=\{(\lambda,g,\mu)~|~\lambda,\mu\in\Lambda^{\jmath}(n, d), g\in\D_{\lambda\mu}\}
\end{equation*}
and
\begin{equation*}   \label{eq:Xi}
\widetilde{\Xi} :=\{(\lambda,g,\mu)~|~\lambda,\mu\in\Lambda^{\jmath}(n, d), g\in\D_{\lambda\mu}^{+}\}.
\end{equation*}
It is well-known that $\Xi$ is in bijection with $\widetilde{\Xi}$ (cf. \cite[Proposition 9.15]{Lu03}) and we shall identify them.

Let $D=2d+1$. Following \cite[(2.2.1-2)]{LL21} we set
$$\Theta_{n,d} :=\{(a_{ij})_{-r\leq i, j\leq r}\in \mathrm{Mat}_{n\times n}(\mathbb{N})\:|\:\sum_{i, j}a_{ij}=D\}$$
and
$$\Pi_{n, d} :=\{(a_{ij})\in \Theta_{n,d}\:|\:a_{ij}=a_{-i,-j}\mathrm{~for~all~}i,j\}.$$

Following \cite[(2.1.9)]{LL21}, for each $\lambda=(\lambda_{i})_{-r\leq i\leq r}\in \Lambda^{\jmath}(n, d)$ and an integer $-r\leq i\leq r$ we define an integer interval $R_{i}^{\lambda}\subset \mathbb{Z}$ by
\begin{align*}
R_{i}^{\lambda}=\left\{
\begin{array}{ll}
\Big[\lfloor\frac{\lambda_{0}}{2}\rfloor+\sum\limits_{1\leq j< i}\lambda_{j}+1, \lfloor\frac{\lambda_{0}}{2}\rfloor+\sum\limits_{1\leq j\leq i}\lambda_{j}\Big]&\hspace{0.35cm}\text{if $0< i\leq r$},\\[0.3em]
\big[-\lfloor\frac{\lambda_{0}}{2}\rfloor, \lfloor\frac{\lambda_{0}}{2}\rfloor\big]&\hspace{0.35cm}\text{if $i=0$},\\[0.3em]
-R_{-i}^{\lambda}&\hspace{0.35cm}\text{if $-r\leq i< 0$}.
\end{array}
\right.
\end{align*}
Note that by \cite[(2.1.10)]{LL21} the parabolic subgroup $W_{\lambda}$ can be characterized by
\begin{align*}
W_{\lambda}=\bigcap_{0\leq i\leq r}\mathrm{Stab}(R_{i}^{\lambda})\quad \text{ for each $\lambda\in \Lambda^{\jmath}(n, d)$},
\end{align*}
where $\mathrm{Stab}(R_{i}^{\lambda})$ denotes the stabilizer of $R_{i}^{\lambda}$ in $W_{B_d}$.

By \cite[Lemma 2.2.1]{LL21} (see also \cite[\S1.3.8-10]{JK81} for the case of the symmetric group), we have the following results.
\begin{lem}\label{a3}
\begin{itemize}
\item[(1)]
For any $\lambda, \mu\in \Lambda^{\jmath}(n, d)$ and $x, y\in W_{B_d}$, we have $x\in W_\lambda y W_\mu$ if and only if $\sharp \big(R_{i}^{\lambda}\cap y(R_{j}^{\mu})\big)=\sharp \big(R_{i}^{\lambda}\cap x(R_{j}^{\mu})\big)$ for all integers $-r\leq i, j\leq r$.
\item[(2)]
Moreover, assuming that $\lambda=(\lambda_{i})_{-r\leq i\leq r}$ and $\mu=(\mu_{i})_{-r\leq i\leq r}$, the map
\begin{align*}
f :W_\lambda y W_\mu\mapsto (a_{ij})_{-r\leq i, j\leq r}, \quad\text{where $a_{ij}=\sharp \big(R_{i}^{\lambda}\cap y(R_{j}^{\mu})\big)$}
\end{align*}
establishes a bijection between the set of double cosets of $W_\lambda$ and $W_\mu$ in $W_{B_d}$ and the set of matrices $(a_{ij})$ in $\Pi_{n, d}$ which satisfy
\begin{align*}
\lambda_{k}=\sum\limits_{-r\leq j\leq r}a_{kj}  \quad\text{and}\quad \mu_{k}=\sum\limits_{-r\leq i\leq r}a_{ik}\quad \text{ for $-r\leq k\leq r$}.
\end{align*}
\end{itemize}
\end{lem}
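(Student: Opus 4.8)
The plan is to read both parts of the lemma as the type-$B$ (signed-permutation) analogue of the classical parametrization of double cosets $W_\lambda\backslash \mathrm{Sym}(N)/W_\mu$ by nonnegative integer matrices with prescribed row and column sums, which is exactly the symmetric-group content imported from \cite[\S1.3.8-10]{JK81}. First I would fix the permutation model: realize $W_{B_d}$ as the group of bijections $w$ of the index set $I=\{-d,\dots,-1,0,1,\dots,d\}$ satisfying $w(-k)=-w(k)$ and $w(0)=0$. Under this model each $R_i^{\lambda}$ is a subset of $I$, the family $\{R_i^{\lambda}\}_{-r\le i\le r}$ is a partition of $I$ into blocks with $\sharp R_i^{\lambda}=\lambda_i$ that is symmetric in the sense $R_{-i}^{\lambda}=-R_i^{\lambda}$, and the stabilizer description recalled just before the statement identifies $W_\lambda$ with the subgroup of signed permutations preserving every block $R_i^{\lambda}$ setwise. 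With this dictionary the lemma becomes a purely combinatorial statement about how such sign-symmetric partitions interact under the signed-permutation group.

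The forward (``only if'') implication of (1), together with the well-definedness of $f$ on double cosets and the assertion that $f$ lands in $\Pi_{n,d}$ with the stated margins, is the routine half. If $x=uyv$ with $u\in W_\lambda$ and $v\in W_\mu$, then $v(R_j^{\mu})=R_j^{\mu}$ and $u(R_i^{\lambda})=R_i^{\lambda}$, so applying the bijection $u^{-1}$ gives $u^{-1}\big(R_i^{\lambda}\cap x(R_j^{\mu})\big)=R_i^{\lambda}\cap y(R_j^{\mu})$ and hence the two cardinalities coincide. Writing $a_{ij}=\sharp\big(R_i^{\lambda}\cap y(R_j^{\mu})\big)$, the same bookkeeping yields the margins and the symmetry: summing over $j$ (resp. over $i$) and using that $\{R_j^{\mu}\}$ (resp. $\{R_i^{\lambda}\}$) partitions $I$ while $y(I)=I$ gives $\sum_j a_{ij}=\lambda_i$ and $\sum_i a_{ij}=\mu_j$, while $R_{-i}^{\lambda}=-R_i^{\lambda}$, $R_{-j}^{\mu}=-R_j^{\mu}$ and $y(-A)=-y(A)$ force $a_{-i,-j}=a_{ij}$, i.e. $(a_{ij})\in\Pi_{n,d}$.

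The substance is the injectivity (the ``if'' direction of (1)) and the surjectivity of $f$, which I would establish by the standard refinement-and-matching construction. For injectivity, suppose $x$ and $y$ yield the same matrix $(a_{ij})$. Inside each $R_j^{\mu}$ the sets $x^{-1}(R_i^{\lambda})\cap R_j^{\mu}$ and $y^{-1}(R_i^{\lambda})\cap R_j^{\mu}$ both have cardinality $a_{ij}$, and as $i$ varies each family partitions $R_j^{\mu}$; choosing bijections $v_{ij}\colon x^{-1}(R_i^{\lambda})\cap R_j^{\mu}\to y^{-1}(R_i^{\lambda})\cap R_j^{\mu}$ and assembling them produces $v$ stabilizing every $R_j^{\mu}$, hence $v\in W_\mu$, with $v\big(x^{-1}(R_i^{\lambda})\big)=y^{-1}(R_i^{\lambda})$. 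One then checks that $u:=xv^{-1}y^{-1}$ stabilizes every $R_i^{\lambda}$, so $u\in W_\lambda$, and $uyv=x$, giving $x\in W_\lambda y W_\mu$. Surjectivity is the mirror image: given $(a_{ij})\in\Pi_{n,d}$ with the prescribed margins, refine $R_j^{\mu}=\bigsqcup_i Q_{ij}$ and $R_i^{\lambda}=\bigsqcup_j P_{ij}$ with $\sharp Q_{ij}=\sharp P_{ij}=a_{ij}$ (possible because the margins match the block sizes $\mu_j=\sum_i a_{ij}$ and $\lambda_i=\sum_j a_{ij}$), and glue bijections $Q_{ij}\to P_{ij}$ into a single map $y$ realizing $\sharp\big(R_i^{\lambda}\cap y(R_j^{\mu})\big)=a_{ij}$.

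The step that needs genuine care, and which I expect to be the main obstacle, is guaranteeing that every bijection produced above lies in $W_{B_d}$, that is, commutes with $k\mapsto -k$ and fixes $0$, rather than being a mere permutation of $I$. Since $x,y$ and both partitions are sign-symmetric, the refined pieces come in negation-conjugate pairs indexed by $(i,j)$ and $(-i,-j)$, so I would make the choices on only one member of each pair and define the other by conjugating with $k\mapsto -k$; this forces the assembled $u,v$ (and the $y$ in the surjectivity step) to be signed permutations. The truly exceptional piece is the central block $(i,j)=(0,0)$: the set $R_0^{\lambda}\cap y(R_0^{\mu})$ is negation-invariant and contains $0$, so $a_{00}$ is odd, and on this block I would send $0\mapsto 0$, match the positive elements freely, and extend oddly. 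That $a_{00}$ is odd is not an extra hypothesis but is forced by the data, since $a_{00}=\lambda_0-2\sum_{j>0}a_{0j}$ while $\lambda_0$ is odd (as $\lambda_0+2\sum_{i>0}\lambda_i=2d+1$); this compatibility is precisely what makes the central bijection fixing $0$ exist, and it is the one point where the type-$B$ argument departs from the symmetric-group template of \cite{JK81}.
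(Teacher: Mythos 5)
Your proof is correct, but there is nothing in the paper to compare it against line by line: the paper does not prove this lemma, it simply imports it from \cite[Lemma 2.2.1]{LL21}, pointing to \cite[\S 1.3.8-10]{JK81} for the symmetric-group prototype. What you have written is a correct, self-contained version of the standard double-coset/matrix correspondence that those references use: the easy direction and the row/column margins and the symmetry $a_{-i,-j}=a_{ij}$ by transporting blocks under $u\in W_\lambda$, $v\in W_\mu$ and negation; injectivity by assembling block-wise bijections $x^{-1}(R_i^\lambda)\cap R_j^\mu \to y^{-1}(R_i^\lambda)\cap R_j^\mu$ into an element $v\in W_\mu$ and checking $u:=xv^{-1}y^{-1}\in W_\lambda$; surjectivity by refining the blocks according to the prescribed matrix and gluing. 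Crucially, you also identify and resolve the only points where type $B$ genuinely differs from the \cite{JK81} template: all choices must be made equivariantly for $k\mapsto -k$, so one works on a fundamental domain of the involution $(i,j)\mapsto(-i,-j)$ and conjugates to define the other half, and the central block needs the parity observation that $a_{00}=\lambda_0-2\sum_{j>0}a_{0j}$ is odd (since $\lambda_0$ is odd), which is exactly what allows an odd bijection of the central pieces fixing $0$. This is the argument the paper leaves implicit in its citation, so your proposal both matches the intended route and supplies the details the paper omits.
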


By Lemma \ref{a3}, there is a natural bijection between $\Xi$ or $\widetilde{\Xi}$ and $\Pi_{n, d}$, and we shall identify them. In the following, if $A\in \Pi_{n, d}$, we shall denote by $(ro(A), w_{A}, co(A))$ and $(ro(A), w_{A}^{+}, co(A))$ the corresponding element in $\Xi$ and $\widetilde{\Xi}$, respectively.

For $\lambda, \mu\in \Lambda^{\jmath}(n, d)$ and $w\in \D_{\lambda\mu}$, we define $\phi_{\lambda, \mu}^{w}\in S^{\jmath}(n, d)$ by
$$\phi_{\lambda, \mu}^{w}(x_{\nu}h)=\delta_{\mu, \nu}\sum_{z\in W_\lambda wW_\mu}q^{\ell(z)}T_{z}h\quad \mathrm{for~}h\in \HH.$$
Then the set $\{\phi_{\lambda, \mu}^{w}\:|\:\lambda, \mu\in \Lambda^{\jmath}(n, d)\mathrm{~and~}w\in \D_{\lambda\mu}\}$ forms an $\A$-basis of $S^{\jmath}(n, d)$ (see \cite[Lemma 3.2]{LL21}).

Following \cite[(2.2.6)]{LL21} we define two subsets $I_{\mathfrak{a}}$ and $I$ in $\mathbb{Z}\times \mathbb{Z}$ by
\begin{align}\label{a1}
I_{\mathfrak{a}}=(\{0\}\times [1,r])\sqcup ([1,r]\times [-r, r])\quad\text{ and }\quad I=I_{\mathfrak{a}}\sqcup \{(0,0)\}.
\end{align}
For each $(i,j)\in I$, we set
\begin{align}\label{a2}
a_{ij}^{\natural}=\left\{
\begin{array}{ll}
\frac{1}{2}(a_{ij}-1)&\hspace{0.35cm}\text{if $(i,j)=(0,0)$},\\[0.3em]
a_{ij}&\hspace{0.35cm}\text{otherwise}.
\end{array}
\right.
\end{align}

Following \cite[(4.2.1)]{LL21} or \cite[(3.16)]{BKLW18}, for each $A\in \Pi_{n, d}$, we define $d_{A}$ by
$$d_{A}=\frac{1}{2}\sum_{(i,j)\in I}\Bigg(\sum\limits_{\substack{x\leq i\\y> j}} a_{ij}^{\natural}a_{xy}+\sum\limits_{\substack{x\geq i\\y< j}} a_{ij}^{\natural}a_{xy}\Bigg).$$
For each $A\in \Pi_{n, d}$, if $A=(\lambda,w_{A},\mu)$, following \cite[(4.2.3)]{LL21} we set
\begin{align}\label{g1}
[A]=q^{-d_{A}}\phi_{\lambda, \mu}^{w_{A}}.
\end{align}
Then the set $\{[A]\:|\:A\in \Pi_{n, d}\}$ also forms an $\A$-basis of $S^{\jmath}(n, d)$, which is called the {\em standard basis}.

We define a bar involution $\bar{\cdot}$ on $S^{\jmath}(n, d)$ as follows: for each $f\in \Hom_{\HH}(x_{\nu}\HH,x_{\lambda}\HH)\subset S^{\jmath}(n, d)$, we have
\begin{align}\label{d2}
\bar{f}(x_{\nu'}h)=\delta_{\nu', \nu}\overline{f(\overline{x_{\nu}})}h\quad \mathrm{for~}h\in \HH.
\end{align}
On $\Pi_{n, d}$ we define a partial order denoted by $<$ as follows:
\begin{align}\label{d3}
A'<A \mbox{ if and only if } ro(A')=ro(A), co(A')=co(A), w_{A'}^{+}<w_{A}^{+},
\end{align}
where the $<$ on $W_{B_d}$ is the usual Bruhat ordering (cf. \cite[\S2.1]{Lu03}). Recall that in \cite[\S3.6]{BKLW18}, a {\em canonical basis} for $S^{\jmath}(n, d)$ has been constructed, which is denoted by $\{\{A\}\:|\:A\in \Pi_{n, d}\}$. For each $A\in \Pi_{n, d}$, $\{A\}$ is characterized by
$$\overline{\{A\}}=\{A\}$$
and
$$\{A\}=\sum\limits_{A'\leq A}P_{A', A}[A'],$$
where $P_{A, A}=1$ and $P_{A', A}\in q^{-1}\mathbb{N}[q^{-1}]$ for $A'< A$.

Recall that in \cite{KL79}, the left, right and two-sided cells with respect to the Kazhdan--Lusztig basis $\{\mathcal{C}_{w}\:|\:w\in W_{B_d}\}$ of $\HH$ have been defined. In \cite[(2.14)]{Sp82} (see also \cite[(3.2.1)]{Lu85}), it has been proved that the structure constants $h_{x, y}^{z}$ of $\HH$ with respect to the Kazhdan--Lusztig basis $\{\mathcal{C}_{w}\}$ lie in $\mathbb{N}[q, q^{-1}]$. Because of the intersection cohomology construction of $\{\{A\}\:|\:A\in \Pi_{n, d}\}$, the structure constants $g_{A, B}^{C}$ of $S^{\jmath}(n, d)$ associated to them also lie in $\mathbb{N}[q, q^{-1}]$ (see the proof of \cite[Theorem 5.6]{LiW18}). Using these and Lemma \ref{another-charact-cells}, we can obtain a classification of left, right and two-sided cells for $S^{\jmath}(n, d)$ with respect to the canonical basis $\{\{A\}\:|\:A\in \Pi_{n, d}\}$. (In the following, we shall write $\mathcal{C}_{y}\preceq_{\star}\mathcal{C}_{w}$ as $y\preceq_{\star}w$ and $\mathcal{C}_{y}\sim_{\star}\mathcal{C}_{w}$ as $y\sim_{\star}w$ for $\star\in \{L, R, LR\}$.)
\begin{prop}
\label{lem:d=d2=d000ad}
For $A, B\in \Pi_{n, d}$, we have the following results.
\begin{enumerate}
\item
$\{A\}\preceq_{L}\{B\}$ if and only if $co(A)=co(B)$ and $w_{A}^{+}\preceq_{L} w_{B}^{+}$. Similarly, $\{A\}\preceq_{R}\{B\}$ if and only if $ro(A)=ro(B)$ and $w_{A}^{+}\preceq_{R} w_{B}^{+}$.
\item
$\{A\}\sim_{L}\{B\}$ if and only if $co(A)=co(B)$ and $w_{A}^{+}\sim_{L} w_{B}^{+}$. Similarly, $\{A\}\sim_{R}\{B\}$ if and only if $ro(A)=ro(B)$ and $w_{A}^{+}\sim_{R} w_{B}^{+}$.
\item
$\{A\}\sim_{LR}\{B\}$ if and only if $w_{A}^{+}\sim_{LR} w_{B}^{+}$.
\end{enumerate}
\end{prop}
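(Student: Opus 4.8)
The plan is to push the whole question into the Iwahori--Hecke algebra $\HH$ and read off the cell relations from its Kazhdan--Lusztig combinatorics. First I would record the $\A$-linear isomorphism
\[
\Theta\colon S^{\jmath}(n,d)=\bigoplus_{\lambda,\mu}\Hom_{\HH}(x_{\mu}\HH,x_{\lambda}\HH)\;\xrightarrow{\ \sim\ }\;\bigoplus_{\lambda,\mu}x_{\lambda}\HH\,x_{\mu}\subseteq\HH,
\]
given on the $(\lambda,\mu)$-summand by $\phi\mapsto\phi(x_{\mu})$, so that $\phi^{w}_{\lambda,\mu}\mapsto\sum_{z\in W_{\lambda}wW_{\mu}}q^{\ell(z)}T_{z}$. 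Two properties of $\Theta$ are the backbone of the argument. First, $\Theta$ turns the composition product into the product of $\HH$ up to a nonzero scalar: if $co(C)=ro(B)=\nu$ then $\Theta(\{C\}\{B\})=\pi_{\nu}^{-1}\Theta(\{C\})\Theta(\{B\})$, where $\pi_{\nu}=\sum_{w\in W_{\nu}}q^{2\ell(w)}$ satisfies $x_{\nu}^{2}=\pi_{\nu}x_{\nu}$, while $\{C\}\{B\}=0$ unless $co(C)=ro(B)$. Second, using the intersection cohomology construction of the canonical basis recalled above (and that $x_{\nu}$ is a monomial multiple of the Kazhdan--Lusztig element $\mathcal{C}_{w_{0,\nu}}$ of the longest element $w_{0,\nu}$ of $W_{\nu}$), $\Theta$ sends $\{A\}$ to a monomial multiple of $\mathcal{C}_{w_{A}^{+}}$. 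Thus, up to harmless powers of $q$, the canonical basis of $S^{\jmath}(n,d)$ is identified with the Kazhdan--Lusztig elements attached to maximal double coset representatives, and canonical-basis multiplication becomes multiplication of the corresponding $\mathcal{C}_{w}$'s.

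Granting this, both preorders are detected by a single multiplication. Since the structure constants of $\HH$ in $\{\mathcal{C}_{w}\}$ and of $S^{\jmath}(n,d)$ in $\{\{A\}\}$ lie in $\mathbb{N}[q,q^{-1}]$, Lemma \ref{another-charact-cells} gives that $\{A\}\preceq_{L}\{B\}$ iff $\{A\}$ occurs in $\{C\}\{B\}$ for some $C$, and $w_{A}^{+}\preceq_{L}w_{B}^{+}$ iff $\mathcal{C}_{w_{A}^{+}}$ occurs in $\mathcal{C}_{z}\mathcal{C}_{w_{B}^{+}}$ for some $z\in W_{B_d}$. For the easy direction of (1), if $\{A\}\preceq_{L}\{B\}$, choose such a $C$; nonvanishing of $\{C\}\{B\}$ forces $co(C)=ro(B)$, and as $\{C\}\{B\}\in\Hom_{\HH}(x_{co(B)}\HH,x_{ro(C)}\HH)$ we must have $co(A)=co(B)$. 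Applying $\Theta$ shows $\mathcal{C}_{w_{A}^{+}}$ occurs in $\mathcal{C}_{w_{C}^{+}}\mathcal{C}_{w_{B}^{+}}$, so $w_{A}^{+}\preceq_{L}w_{B}^{+}$ with witness $z=w_{C}^{+}$.

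The converse is where the work lies. Assume $co(A)=co(B)$ and $w_{A}^{+}\preceq_{L}w_{B}^{+}$, and set $\lambda=ro(A)$, $\nu=ro(B)$; there is $z$ with $\mathcal{C}_{w_{A}^{+}}$ occurring in $\mathcal{C}_{z}\mathcal{C}_{w_{B}^{+}}$, but $z$ need not be a maximal double coset representative, so $\mathcal{C}_{z}\notin\Theta(S^{\jmath}(n,d))$ in general. The hard part is to replace $z$ by some $w_{C}^{+}$ without losing the occurrence, and I would do this by a two-sided eigenvector argument. Because $w_{A}^{+}$ is the longest element of its double coset it is left $\lambda$-maximal, so $\mathcal{C}_{w_{0,\lambda}}\mathcal{C}_{w_{A}^{+}}$ is a nonzero scalar multiple of $\mathcal{C}_{w_{A}^{+}}$; by positivity of the Kazhdan--Lusztig structure constants, left multiplication by $\mathcal{C}_{w_{0,\lambda}}$ preserves the occurrence of $\mathcal{C}_{w_{A}^{+}}$. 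Likewise $w_{B}^{+}$ is left $\nu$-maximal, so $\mathcal{C}_{w_{0,\nu}}\mathcal{C}_{w_{B}^{+}}$ is a nonzero scalar multiple of $\mathcal{C}_{w_{B}^{+}}$, and inserting $\mathcal{C}_{w_{0,\nu}}$ just left of $\mathcal{C}_{w_{B}^{+}}$ merely rescales. Hence $\mathcal{C}_{w_{A}^{+}}$ occurs in $\mathcal{C}_{w_{0,\lambda}}\mathcal{C}_{z}\mathcal{C}_{w_{0,\nu}}\mathcal{C}_{w_{B}^{+}}$. Now $\mathcal{C}_{w_{0,\lambda}}\mathcal{C}_{z}\mathcal{C}_{w_{0,\nu}}$ lies in $\mathcal{C}_{w_{0,\lambda}}\HH\,\mathcal{C}_{w_{0,\nu}}$, which is spanned with coefficients in $\mathbb{N}[q,q^{-1}]$ by the $\mathcal{C}_{y}$ with $y$ both left $\lambda$-maximal and right $\nu$-maximal, i.e.\ with $y=w_{C}^{+}$ for a genuine $C\in\Pi_{n,d}$ with $ro(C)=\lambda$ and $co(C)=\nu$. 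Therefore $\mathcal{C}_{w_{A}^{+}}$ occurs in $\mathcal{C}_{w_{C}^{+}}\mathcal{C}_{w_{B}^{+}}$ for at least one such $C$, and transporting back through $\Theta$ yields $\{A\}\preceq_{L}\{B\}$. This passage from an arbitrary witness $z$ to a maximal double coset witness $w_{C}^{+}$, achieved by sandwiching with the parabolic longest elements together with $\mathbb{N}$-positivity, is the main obstacle.

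Finally I would assemble the statement. The right-hand assertions in (1) follow by applying the anti-automorphism of $S^{\jmath}(n,d)$ induced by matrix transposition, which swaps $ro\leftrightarrow co$ and $\preceq_{L}\leftrightarrow\preceq_{R}$ and corresponds on $\HH$ to $w\mapsto w^{-1}$, under which $w_{A}^{+}\mapsto w_{A^{t}}^{+}=(w_{A}^{+})^{-1}$. Part (2) is then immediate, since $\{A\}\sim_{L}\{B\}$ unwinds via (1) to $co(A)=co(B)$ together with $w_{A}^{+}\sim_{L}w_{B}^{+}$, and dually for $\sim_{R}$. For (3) I would run the same scheme two-sidedly: by Lemma \ref{another-charact-cells}, $\{A\}\preceq_{LR}\{B\}$ iff $\{A\}$ occurs in some $\{C\}\{B\}\{C'\}$, and here $ro(A)$ and $co(A)$ are unconstrained because $C,C'$ are free, so no $ro$/$co$ condition survives. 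The easy direction is $\Theta$ as before, and for the converse one writes $\mathcal{C}_{w_{A}^{+}}$ as occurring in $\mathcal{C}_{z}\mathcal{C}_{w_{B}^{+}}\mathcal{C}_{z'}$ and sandwiches on the far left by $\mathcal{C}_{w_{0,\lambda}}$ with $\lambda=ro(A)$, on the far right by $\mathcal{C}_{w_{0,\mu}}$ with $\mu=co(A)$, and next to $\mathcal{C}_{w_{B}^{+}}$ by the parabolic longest elements for $ro(B)$ and $co(B)$, exactly as above, converting $z,z'$ into maximal double coset representatives. This gives $\{A\}\preceq_{LR}\{B\}$, and hence $\{A\}\sim_{LR}\{B\}\Leftrightarrow w_{A}^{+}\sim_{LR}w_{B}^{+}$.
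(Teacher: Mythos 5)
Your proposal is correct, and for parts (1) and (2) it is essentially the paper's own route: the paper proves this proposition by citing \cite[Lemma 2.2 and Corollary 2.3]{Du96}, and it writes out exactly this template in its proof of the parallel Proposition \ref{lem:d=d2=d000ad-i} for $\tilde{S}^{\imath}(n,d)$ --- the identification of $\{A\}$ with a monomial multiple of $\mathcal{C}_{w_{A}^{+}}$, the structure-constant relation (the analogue of Lemma \ref{f2}), Lemma \ref{another-charact-cells} plus positivity, and the sandwich $\mathcal{C}_{w_{\circ}^{\lambda}}\mathcal{C}_{z}\mathcal{C}_{w_{\circ}^{\nu}}$ landing in the span of the $\mathcal{C}_{w_{C}^{+}}$ (Curtis's result, cf.\ Lemmas \ref{f5} and \ref{e4}) are precisely the ingredients used there. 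Where you genuinely diverge is part (3): the paper (following Du) deduces (3) from (2) by invoking the fact that $w_{A}^{+}\sim_{LR}w_{B}^{+}$ forces the existence of an intermediate element $x$ with $w_{A}^{+}\sim_{L}x\sim_{R}w_{B}^{+}$ (\cite[Proposition 18.4]{Lu03}, which rests on the $a$-function formalism), and then uses descent sets (cf.\ Lemma \ref{f3} and \cite[(1.2)(i)]{Cur85}) to conclude that $x$ is automatically the longest element of its double coset, hence equals some $w_{D}^{+}$; you instead run the sandwich argument two-sidedly, inserting parabolic longest elements on both sides of $\mathcal{C}_{z}\mathcal{C}_{w_{B}^{+}}\mathcal{C}_{z'}$. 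Both arguments are valid; yours is more self-contained (no appeal to the $a$-function or to the factorization of $\sim_{LR}$ through $\sim_{L}$ and $\sim_{R}$) and in fact characterizes the preorder $\preceq_{LR}$ itself, which is slightly stronger than the stated equivalence of $\sim_{LR}$, while the paper's reduction to (2) is shorter given the known structure theory of cells in finite Weyl groups. One point you should tighten: the assertion that $\Theta$ sends $\{A\}$ to a monomial multiple of $\mathcal{C}_{w_{A}^{+}}$ is true but needs the uniqueness argument of \cite{Du92} (bar-invariance plus triangularity, noting that $\Theta$ intertwines the bar involutions of $S^{\jmath}(n,d)$ and $\HH$ only up to a power of $q$ coming from $\overline{x_{\nu}}=q^{-2\ell(w_{\circ}^{\nu})}x_{\nu}$), rather than a bare appeal to the intersection cohomology construction.
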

\begin{proof}
The proof is similar to that of \cite[Lemma 2.2 and Corollary 2.3]{Du96}, and we omit the details.
\end{proof}
\subsection{Description of two-sided cells in $\HH$}
\label{d5}
In this subsection, we recall the description of two-sided cells in $\HH$ with respect to the Kazhdan--Lusztig basis following \cite{BV82}.

According to \cite[\S3]{Lu77} and also \cite[\S5]{Lu79}, we define a {\em symbol} in type $B_{d}$ to be an array of nonnegative integers
\begin{equation*}  \label{dim:Sirrep1}
\Lambda={\lambda_1<\lambda_2<\cdots <\lambda_{m+1} \choose \mu_1<\mu_2<\cdots <\mu_{m}}
\end{equation*}
such that $\sum_{i=1}^{m+1}\lambda_{i}+\sum_{j=1}^{m}\mu_{j}=d+m^{2}$, where $m\in \mathbb{Z}_{\geq 0}$. We introduce an equivalence relation $\sim$ on the set of symbols as the transitive closure of the `shift relations'
\begin{equation*}  \label{dim:Sirrep-2}
{\lambda_1<\lambda_2<\cdots <\lambda_{m+1} \choose \mu_1<\mu_2<\cdots <\mu_{m}}=\Lambda\sim\Lambda'={0<\lambda_1+1<\lambda_2+1<\cdots <\lambda_{m+1}+1 \choose 0<\mu_1+1<\mu_2+1<\cdots <\mu_{m}+1}.
\end{equation*}
We shall denote by $[\Lambda]$ the equivalence class of a symbol $\Lambda$ and $\Phi_{d}$ the set of equivalence classes of symbols relative to $\sim$. Let $\Psi_{d}$ be the set of ordered pairs $(\alpha, \beta)$ of partitions $\alpha=(\alpha_{m+1}, \ldots, \alpha_2, \alpha_1)$, $\beta=(\beta_m, \ldots, \beta_2, \beta_1)$ such that $\sum_{i=1}^{m+1}\alpha_{i}+\sum_{j=1}^{m}\beta_{j}=d, \alpha_1\geq 0, \beta_{1}\geq 0$. Then $\Psi_{d}$ is in one-to-one correspondence with $\Phi_{d}$ by associating to $(\alpha, \beta)$ the equivalence class $[\Lambda]$ of a symbol $\Lambda$ defined by $\lambda_i=\alpha_i+i-1$ $(1\leq i\leq m+1)$, $\mu_j=\beta_j+j-1$ $(1\leq j\leq m)$. Both sets are in one-to-one correspondence with the set of complex irreducible representations of $W_{B_d}$ $($up to isomorphism$)$ (cf. \cite[\S5]{Lu79}).

Given a symbol
\begin{align*}
\Lambda={\lambda_1<\lambda_2<\cdots <\lambda_{m+1} \choose \mu_1<\mu_2<\cdots <\mu_{m}}
\end{align*}
for some $m\in \mathbb{Z}_{\geq 0}$, we take the set $\{2\lambda_{i}+1, 2\mu_{j}\:|\:1\leq i\leq m+1, 1\leq j\leq m\}$ and order it in a decreasing sequence, say $(\nu_{2m+1},\ldots,\nu_{1})$. Since $\sum_{i=1}^{2m+1}\nu_{i}=2d+2m^{2}+m+1$ and $\nu_{2m+1}>\nu_{2m}>\cdots>\nu_{1}$, we see that $(\nu_{2m+1}-(2m+1)+1, \nu_{2m}-2m+1,\ldots,\nu_1-1+1)$ is a partition of $2d+1$, which we shall denote by $par(\Lambda)$. By definition, it is easy to see that if $\Lambda\sim\Lambda'$, then $par(\Lambda)$ differs from $par(\Lambda')$ by possibly some 0's, which can be regarded as the same partition and we denote it by $par[\Lambda]$. Let $\mathfrak{P}$ be the set of partitions $par[\Lambda]$, where $[\Lambda]$ runs over $\Phi_{d}$.

We now recall the Robinson--Schensted algorithm following \cite[p.171]{BV82}. Let $\lambda=(\lambda_1, \lambda_2,\ldots,\lambda_{l(\lambda)})$ be a partition of $2d+1$, where $l(\lambda)$ is the number of nonzero components of $\lambda$. The Young diagram of $\lambda$ is a collection of boxes, arranged in left justified rows with $\lambda_1$ boxes in row 1, $\lambda_2$ boxes in row 2, and so on. A $\lambda$-tableau $T$ is obtained by filling in the boxes in the Young diagram of $\lambda$ by the integers $1,2,\ldots,2d+1$, each occurring once; we will call $\lambda$ the {\em shape} of $T$. A $\lambda$-tableau $T$ is called {\em standard} if the entries decrease along each row and down each column.

For a standard tableau $T$ and a positive integer $k$, we construct a new tableau $T\leftarrow k$ by the following recursive algorithm:
\begin{enumerate}
\item
If $k_1=k$ is less than or equal to any integer in the first row of $T$, then $T\leftarrow k$ is the tableau obtained by adding $k_1$ at the end of the first row of $T$.
\item
Otherwise, find the largest entry $k_2$ in the first row satisfying $k_2< k_1$, and replace $k_2$ by $k_1$.
\item
Repeat the procedure for the second row of $T$ with $k_2$, and so on.
\end{enumerate}
\noindent The algorithm ends whenever an integer $k_a$ is added at the end of the $a$-th row.

Given an element of $\mathfrak{S}_{2d+1}$:
\begin{equation*}
w=\left(\hspace{-1.6mm}
 \begin{array}{ccccccccccccccccccccccccc}
 1&2&\cdots&2d+1\\
 w_1&w_2&\cdots&w_{2d+1}\\
\end{array}
\hspace{-1.6mm}\right),
\end{equation*}
we define a standard tableau $T(w)$ recursively as follows:
$$P_0=\emptyset, \quad P_{t}=P_{t-1}\leftarrow w_{t} \mbox{ for } 1\leq t\leq 2d+1,\quad T(w) :=P_{2d+1}.$$
Then the map $w\mapsto (T(w), T(w^{-1}))$ gives a bijection between $\mathfrak{S}_{2d+1}$ and the set of all pairs of standard tableaux of the same shape, which is called the {\em Robinson--Schensted correspondence}.

We shall identify $W_{B_d}$ with the group of permutations $w$ on the set $\{-d, -(d-1),\ldots,-1,0,1,$ $\ldots,d-1,d\}$ such that $w(-i)=-w(i)$ for any $i$. Under the identification, we have
\begin{align*}
s_{0}=(-1, 1),\quad s_{i}=(-i-1, -i)(i, i+1) \text{ for $1\leq i\leq d-1$.}
\end{align*}
In particular, $w(0)=0$ for any $w\in W_{B_d}$. For each $w\in W_{B_d}$, it can be considered as an element of $\mathfrak{S}_{2d+1}$ under the identification $-d\leftrightarrow 1, -(d-1)\leftrightarrow 2,\ldots, 0\leftrightarrow d+1,\ldots, d-1\leftrightarrow 2d, d\leftrightarrow 2d+1$, and therefore we can associate to it a pair of standard tableaux $(T(w), T(w^{-1}))$ by applying the Robinson--Schensted correspondence for $\mathfrak{S}_{2d+1}$ as above (cf. \cite[p.173]{BV82}); let us denote by $PT(w)$ the shape of $T(w)$, which is a partition of $2d+1$. Then we have the following proposition due to Barbasch and Vogan.
\begin{prop} $($see \cite[Proposition 17]{BV82}$)$
\label{lem:d=d2=d000adadc}
For any $w\in W_{B_d}$, $PT(w)$ belongs to the set $\mathfrak{P}.$
\end{prop}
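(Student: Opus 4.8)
The plan is to translate both the target set $\mathfrak{P}$ and the hypothesis $w\in W_{B_d}$ into statements about the $2$-core of a partition, and then to connect them through the behaviour of the Robinson--Schensted correspondence under the longest element of $\mathfrak{S}_{2d+1}$.

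First I would give an intrinsic description of $\mathfrak{P}$. Passing from a symbol to the multiset $\{2\lambda_i+1,2\mu_j\}$ and then to $par(\Lambda)$ is exactly the passage from a $\beta$-set of odd cardinality $2m+1$ to a partition, where the $\beta$-set consists of $m+1$ odd and $m$ even entries. I would record the parity statistic $\mathrm{diff}(\nu)=\#\{\text{odd }\beta\text{-numbers}\}-\#\{\text{even }\beta\text{-numbers}\}$ and verify two invariances: (i) for a fixed $\nu\vdash 2d+1$, $\mathrm{diff}(\nu)$ is independent of the chosen $\beta$-set, since enlarging the $\beta$-set by the shift adds exactly one new odd and one new even entry; and (ii) $\mathrm{diff}$ is unchanged when a domino is added to $\nu$ (a horizontal domino shifts one $\beta$-number by $2$, while a vertical domino replaces a pair $\{b,b-1\}$ by $\{b+1,b\}$). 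Hence $\mathrm{diff}(\nu)$ depends only on the $2$-core of $\nu$; a direct computation gives $\mathrm{diff}(\delta_k)=k$ for odd $k$ and $-(k+1)$ for even $k$, so $\mathrm{diff}(\nu)=1$ exactly when the $2$-core is the single box. Since a symbol with $par=\nu$ exists precisely when $\mathrm{diff}(\nu)=1$, I would conclude
$$\mathfrak{P}=\{\nu\vdash 2d+1 \mid \text{the $2$-core of }\nu\text{ equals the single box }(1)\}.$$

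Next I would rephrase the hypothesis. Under the identification $-d\leftrightarrow 1,\ldots,d\leftrightarrow 2d+1$, the relation $w(-i)=-w(i)$ says exactly that the image $\sigma\in\mathfrak{S}_{2d+1}$ commutes with the longest element $w_0$ (equivalently, the permutation matrix of $\sigma$ is invariant under the point reflection through its centre), so $\sigma=w_0\sigma w_0$. By Schützenberger's theorem, conjugation by $w_0$ sends $(T(w),T(w^{-1}))$ to $(\mathrm{evac}\,T(w),\mathrm{evac}\,T(w^{-1}))$, where $\mathrm{evac}$ denotes evacuation; as evacuation preserves the shape, the equality $\sigma=w_0\sigma w_0$ forces the insertion tableau $T(w)$ to be \emph{self-evacuating}. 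The key input is then that a shape of odd size admitting a self-evacuating standard tableau must have $2$-core $(1)$ (the odd case of the $q=-1$ phenomenon, equivalent to domino-tileability of $\nu$ away from one fixed box). Applying this to the self-evacuating tableau $T(w)$ of shape $PT(w)$ shows that the $2$-core of $PT(w)$ is $(1)$, whence $PT(w)\in\mathfrak{P}$. Finally, because the $2$-core is invariant under transposition (staircases are self-conjugate and transposing interchanges horizontal and vertical dominoes), it is irrelevant that the algorithm of this section uses decreasing rather than increasing tableaux: the two conventions produce conjugate shapes with the same $2$-core.

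The hardest and least formal step is the implication ``self-evacuating $\Rightarrow$ $2$-core $(1)$'': this is precisely where the arithmetic symmetry of $w$ is converted into a tiling constraint on the shape, and it is the point at which one must invoke (or reprove) the correspondence between evacuation-invariant tableaux and domino tableaux. An alternative route that sidesteps evacuation is to run domino insertion on $w$ directly and match its output shape with $PT(w)$; this trades the $q=-1$ input for an explicit verification that inserting the centrally symmetric word $w_1\cdots w_{2d+1}$ builds $T(w)$ in domino-compatible layers, but it requires careful bookkeeping of the two symmetric halves of the word together with the fixed central value $d+1$.
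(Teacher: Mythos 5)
Your proposal addresses a statement for which the paper itself supplies no argument at all: Proposition \ref{lem:d=d2=d000adadc} is simply imported from \cite[Proposition 17]{BV82}, so there is no internal proof to compare against, and what you have written is a genuine substitute for that citation. Your route is correct. Step one, the identification $\mathfrak{P}=\{\nu\vdash 2d+1 : \text{$2$-core}(\nu)=(1)\}$, is right: the symbol shift acts on $\beta$-sets as the double shift (adjoin $\{0,1\}$, translate everything by $2$), a horizontal domino moves one $\beta$-number by $2$, and a vertical domino replaces $\{b,b-1\}$ by $\{b+1,b\}$, so your statistic $\mathrm{diff}$ descends to the $2$-core; its values on the staircases $\delta_k$ (namely $k$ for $k$ odd, $-(k+1)$ for $k$ even) single out $\delta_1$, and the defining sum condition $\sum\lambda_i+\sum\mu_j=d+m^2$ of a symbol comes out automatically from $|\nu|=2d+1$. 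Step two (the image of $W_{B_d}$ in $\mathfrak{S}_{2d+1}$ is the centralizer of $w_0$, hence $T(w)$ is self-evacuating by Sch\"{u}tzenberger's theorem) and step three (a shape of odd size carrying a self-evacuating standard tableau has $2$-core $(1)$, by the van Leeuwen/Stembridge correspondence between evacuation-fixed tableaux and domino tableaux) are both correct appeals to results that are logically independent of the cell theory being developed here, so there is no circularity; indeed \cite{St96} is already in the paper's bibliography, and your closing remark that the decreasing-tableau convention only conjugates the shape, which does not change the $2$-core, correctly disposes of the convention mismatch. What your approach buys is a conceptual, citable-in-modern-language proof; what Barbasch--Vogan's own treatment buys is independence from the $q=-1$ phenomenon, which is exactly the ingredient you flag as the hardest step, and your alternative suggestion (running domino insertion directly on the centrally symmetric word) is essentially Garfinkle's algorithm, the other standard way to make this proposition effective.

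Two small precision points, neither fatal. First, $\mathrm{diff}$ is well defined only among $\beta$-sets of \emph{odd} cardinality: a single shift sends $\mathrm{diff}$ to $-\mathrm{diff}-1$, so your claim (i) should be stated for the double shift only; this is harmless because symbols always produce odd $\beta$-sets, but it should be said. Second, in step three you need only the \emph{existence} of a self-evacuating tableau of shape $PT(w)$, which $T(w)$ itself provides, so you are using the qualitative part (nonvanishing forces trivial $2$-core) of the enumerative theorem, not the full bijection; stating it that way makes the dependence on \cite{St96} as light as possible.
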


We now define an equivalence relation $\approx$ on $\Phi_{d}$. Given two elements of $\Phi_{d}$:
$$[\Lambda]=\bigg[{\lambda_1<\lambda_2<\cdots <\lambda_{m+1} \choose \mu_1<\mu_2<\cdots <\mu_{m}}\bigg]\quad\mbox{and} \quad [\Lambda']=\bigg[{\lambda_1'<\lambda_2'<\cdots <\lambda_{m+1}' \choose \mu_1'<\mu_2'<\cdots <\mu_{m}'}\bigg],$$ we say that $[\Lambda]\approx[\Lambda']$ if and only if $$\{\lambda_1, \lambda_2, \ldots, \lambda_{m+1}, \mu_1, \mu_2, \ldots, \mu_{m}\}\mbox{ is a permutation of }\{\lambda_1', \lambda_2', \ldots, \lambda_{m+1}', \mu_1', \mu_2', \ldots, \mu_{m}'\}.$$
It is easy to see that if $par[\Lambda]=par[\Lambda']$, then $[\Lambda]=[\Lambda']$ (see \cite[p.80]{Mc96}). Thus, the equivalence relation $\approx$ on $\Phi_{d}$ induces an equivalence relation, which we shall denote by the same notation $\approx$, on the set $\mathfrak{P}$; we say that $par[\Lambda]\approx par[\Lambda']$ if and only if $[\Lambda]\approx[\Lambda']$. Then we have the following proposition, which gives an explicit description of two-sided cells in $\HH$.
\begin{prop} $($see \cite[Theorem 18]{BV82}$)$
\label{theor:d=d2=d000adadc}
For any two elements $w, w'\in W_{B_d}$, we have $w\sim_{LR} w'$ if and only if $PT(w)\approx PT(w')$.
\end{prop}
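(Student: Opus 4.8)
The statement is \cite[Theorem 18]{BV82}, and the plan is to deduce it from Lusztig's theory of two-sided cells and families of irreducible characters, together with the tableau combinatorics of the symmetric group. First I would translate the preorder $\sim_{LR}$ into representation-theoretic language. For a finite Weyl group with equal parameters, Lusztig attaches to each two-sided cell a \emph{family} of irreducible characters --- those $E\in\mathrm{Irr}(W_{B_d})$ occurring in a left-cell module of some left cell contained in that two-sided cell --- and one has $w\sim_{LR}w'$ if and only if $w$ and $w'$ carry the same family (see \cite{Lu85}). Combined with the parametrization of $\mathrm{Irr}(W_{B_d})$ by symbols recalled above (\cite{Lu79}), this reduces the problem to a statement about families and symbols.

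The next step is Lusztig's explicit determination of the families in type $B_d$: two irreducible characters $E_{[\Lambda]}$ and $E_{[\Lambda']}$ lie in the same family exactly when $\Lambda$ and $\Lambda'$ have the same multiset of entries, which is precisely the relation $[\Lambda]\approx[\Lambda']$ introduced above. This identifies the set of two-sided cells of $W_{B_d}$ with $\Phi_d/\!\approx$. Since the excerpt records that $par$ is injective (\cite[p.80]{Mc96}) and $\approx$ on $\mathfrak{P}$ was defined to be its transport along $par$, the map $par$ descends to a bijection $\Phi_d/\!\approx\ \xrightarrow{\ \sim\ }\ \mathfrak{P}/\!\approx$, so it only remains to match the invariant $PT$ with the family. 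By Proposition \ref{lem:d=d2=d000adadc} we already know $PT(w)\in\mathfrak{P}$, hence there is a unique $[\Lambda_w]\in\Phi_d$ with $par[\Lambda_w]=PT(w)$. If one proves that $w\mapsto[\Lambda_w]$ is constant up to $\approx$ on each two-sided cell and that its $\approx$-class is exactly the family of that cell, then the chain of equivalences $w\sim_{LR}w'\iff[\Lambda_w]\approx[\Lambda_{w'}]\iff par[\Lambda_w]\approx par[\Lambda_{w'}]\iff PT(w)\approx PT(w')$ finishes the proof.

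The hard part will be this last matching: identifying the $\approx$-class of the Robinson--Schensted shape, computed through the embedding $W_{B_d}\hookrightarrow\mathfrak{S}_{2d+1}$, with the Lusztig family of $w$. I expect this to be the main obstacle, because the embedding realizes $W_{B_d}$ as the centralizer of the order-reversing involution $j\mapsto 2d+2-j$ on $\{1,\dots,2d+1\}$ rather than as a parabolic subgroup, so one cannot simply restrict the cell structure of $\mathfrak{S}_{2d+1}$. There are two natural routes. The geometric one, followed by Barbasch and Vogan, passes through primitive ideals and associated varieties: the two-sided cell corresponds to a nilpotent orbit whose Jordan type is read off from $PT(w)$ via the Springer correspondence, and the relation $\approx$ reflects the collapse to the relevant special partition. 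The combinatorial one instead analyzes directly how RS-insertion interacts with the symmetry $j\mapsto 2d+2-j$ and with the symbol construction $\{2\lambda_i+1,\,2\mu_j\}$, using the invariance of $PT$ under Knuth transformations to reduce to distinguished representatives of each cell and then evaluating $PT$ there. Either way, reconciling the parity bookkeeping of the symbol --- odd entries $2\lambda_i+1$ coming from the top row, even entries $2\mu_j$ from the bottom --- with the plain shape on $2d+1$ letters is where the real work lies.
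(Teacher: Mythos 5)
First, a point of reference: the paper offers no proof of Proposition \ref{theor:d=d2=d000adadc} at all --- it is quoted directly from \cite[Theorem 18]{BV82}, as the citation in the statement indicates. So your outline must be measured against the theorem it is trying to reprove, not against an argument in the paper.

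Your preliminary reductions are correct and standard: for a finite Weyl group with equal parameters, two-sided cells correspond to Lusztig's families (the right reference is \cite{Lu84} rather than \cite{Lu85}); the families of $W_{B_d}$ are exactly the $\approx$-classes of symbols in $\Phi_d$ (\cite{Lu79}, \cite{Lu82}); and since $par[\Lambda]=par[\Lambda']$ forces $[\Lambda]=[\Lambda']$ (\cite[p.80]{Mc96}), the relation $\approx$ transports faithfully to $\mathfrak{P}$. But all of this only rewrites the statement to be proved. The entire content of the theorem is the step you explicitly defer: that the $\approx$-class of the unique $[\Lambda_w]\in\Phi_d$ with $par[\Lambda_w]=PT(w)$ --- where $PT(w)$ is the Robinson--Schensted shape computed through the embedding $W_{B_d}\hookrightarrow\mathfrak{S}_{2d+1}$ --- coincides with the family attached to the two-sided cell of $w$. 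You name two possible strategies, the primitive-ideal/associated-variety route (which \emph{is} Barbasch--Vogan's own proof) and a hypothetical Knuth-move analysis compatible with the involution $j\mapsto 2d+2-j$, but you carry out neither; in particular, nothing in the proposal establishes even that $PT$ is constant up to $\approx$ on two-sided cells. As written, the proposal is a reduction of \cite[Theorem 18]{BV82} to itself, not a proof: to complete it you would have to reproduce the Barbasch--Vogan primitive-ideal argument or supply the combinatorial matching in full. A secondary caution: if you do cite the identification of two-sided cells with families, you must check that the reference you use establishes it in classical types independently of \cite{BV82}, since otherwise the argument becomes circular.
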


Let $\mathcal{G}_{W_{B}}$ be the subset of $\Phi_{d}$ consisting of equivalence classes of symbols such that $\lambda_{i}\leq \mu_{i}\leq\lambda_{i+1}$ for any $i$. We shall call each element of $\mathcal{G}_{W_{B}}$ {\em special}, which exactly corresponds to a so-called special representation of $W_{B_d}$ (see \cite{Lu79} and also \cite{Lu82}). Let $\tilde{\mathcal{G}}_{W_{B}}$ be the set of equivalence classes of $\Phi_{d}$ relative to $\approx$. It is obvious that each equivalence class in $\tilde{\mathcal{G}}_{W_{B}}$ contains exactly one element of $\mathcal{G}_{W_{B}}$.

Let $\mathcal{P}_{d}$ be the set of partitions of $2d+1$ such that every {\em even} part occurs an even number of times. It is well-known that there exists a one-to-one correspondence between $\mathcal{P}_{d}$ and the set of nilpotent orbits in type $B_{d}$ by the Jordan block decomposition (cf. \cite[Theorem 5.1.2]{CoMc93}). We call an element $\lambda\in \mathcal{P}_{d}$ a {\em special partition} if its conjugate $\lambda^{t}$ also belongs to $\mathcal{P}_{d}$ (cf. \cite[\S6.3]{CoMc93}). We denote by $\mathcal{SP}_{d}$ the set of special partitions of $2d+1$. Then it is easy to check that the map, $\pi:[\Lambda]\mapsto par[\Lambda]$, gives a one-to-one correspondence between $\mathcal{G}_{W_{B}}$ and $\mathcal{SP}_{d}$. (We refer to \cite[p.80]{Mc96} for a construction of the inverse of $\pi$, which is denoted by $\pi'_{n}$.) Thus, by Proposition \ref{theor:d=d2=d000adadc}, we see that there is a one-to-one correspondence between the set of two-sided cells in $\HH$ and $\mathcal{SP}_{d}$.


\subsection{Description of two-sided cells in $S^{\jmath}(n, d)$}
\label{d4}
In this subsection, we shall give a combinatorial description of two-sided cells in $S^{\jmath}(n, d)$.

We first recall a result of Du, which associates partitions to the matrices in $\Pi_{n, d}$ and generalizes Greene's method \cite{Gr79} associating partitions to finite partially ordered sets. Let $P$ be a finite partially ordered set. A {\em chain} in $P$ is a subset of $P$ which is totally ordered by the induced order of $P$. A {\em $k$-chain family} is a subset of P which is a disjoint union of $k$ chains. Let $[-r, r]\subset \mathbb{Z}$ denote the interval from $-r$ to $r$ in $\mathbb{Z}$ and set $[-r, r]^{2}=[-r, r]\times [-r, r]$. It is easy to see that $[-r, r]^{2}$ is a finite partially ordered set with the induced order by setting $(i, j)\leq (i', j')$ if $i\geq i'$ and $j\leq j'$.

Let $\mathfrak{s} :\Pi_{n, d}\rightarrow \mathbb{N}$ be the map sending a matrix $A=(a_{ij})_{-r\leq i,j\leq r}\in \Pi_{n, d}$ to its entry sum $\mathfrak{s}(A)=\sum_{i,j}a_{ij}$. Moreover generally, if $F$ is a $k$-chain family of $[-r, r]^{2}$, we define $\mathfrak{s}_{F}(A)$ to be the sum of the entries $a_{ij}$ with $(i, j)\in F$. We call the map $\mathfrak{s}_{F} :\Pi_{n, d}\rightarrow \mathbb{N}$ an $F$-sum map and $\mathfrak{s}_{F}(A)$ the $F$-sum of $A$. Let $\mathfrak{s}_{k}(A)$ be the {\em maximum} value of $F$-sums of $A$ for all $k$-chain families $F$. We have the following result due to Du.

\begin{thm} $($see \cite[Theorem 1.2]{Du96}$)$
\label{theor:d=d2=d000=du-cells}
For each $A\in \Pi_{n, d}$, we define $\sigma_{i}(A)=\mathfrak{s}_{i}(A)-\mathfrak{s}_{i-1}(A)$ (with the convention that $\mathfrak{s}_{0}(A)=0$) for all $1\leq i\leq n$. Then $\sigma(A)=(\sigma_{1}(A), \sigma_{2}(A), \ldots, \sigma_{n}(A))$ is a partition of $2d+1$.
\end{thm}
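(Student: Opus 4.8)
The plan is to verify the two defining properties of a partition of $2d+1$ separately: that $\sigma(A)$ is a weakly decreasing sequence of nonnegative integers, and that its entries sum to $2d+1$. The sum is the easy half. Since a disjoint union of $k-1$ chains is in particular a disjoint union of $k$ chains (allowing an empty chain), the sequence $\mathfrak{s}_k(A)$ is nondecreasing in $k$, so each $\sigma_i(A)=\mathfrak{s}_i(A)-\mathfrak{s}_{i-1}(A)\ge 0$; and since the sum telescopes,
\[
\sum_{i=1}^{n}\sigma_i(A)=\mathfrak{s}_n(A)-\mathfrak{s}_0(A)=\mathfrak{s}_n(A).
\]
It then remains to show $\mathfrak{s}_n(A)=\mathfrak{s}(A)=D=2d+1$. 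For this I would analyze the poset $[-r,r]^2$ directly. Two elements $(i,j)$ and $(i',j')$ are incomparable exactly when both coordinates strictly increase together, so any antichain, once sorted by its first coordinate, is strictly increasing in both coordinates and hence has at most $n=2r+1$ elements; the main diagonal $(-r,-r),\ldots,(r,r)$ attains this. By Dilworth's theorem the poset therefore decomposes into $n$ chains, and this $n$-chain family covers every cell, giving $\mathfrak{s}_n(A)=\mathfrak{s}(A)=2d+1$.

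The substantive half is that $\sigma(A)$ is weakly decreasing, equivalently that $k\mapsto\mathfrak{s}_k(A)$ is concave:
\[
\mathfrak{s}_{k+1}(A)+\mathfrak{s}_{k-1}(A)\le 2\,\mathfrak{s}_k(A)\qquad(1\le k\le n-1).
\]
Here I would follow Greene's method, lifted to the weighted (matrix) setting. One encodes $A=(a_{ij})$ as a biword listing each pair $(i,j)$ with multiplicity $a_{ij}$, and applies the Robinson--Schensted--Knuth correspondence to obtain a pair of semistandard tableaux of a common shape $\lambda$, a partition of $\sum_{i,j}a_{ij}=2d+1$. Under the order on $[-r,r]^2$ (which is the componentwise order after the reflection $i\mapsto -i$), chains correspond to weakly increasing subwords, and Greene's theorem identifies the maximal $k$-chain sum with a partial sum of $\lambda$:
\[
\mathfrak{s}_k(A)=\lambda_1+\lambda_2+\cdots+\lambda_k.
\]
Granting this, $\sigma_i(A)=\lambda_i$, so $\sigma(A)=\lambda$ is automatically a partition and the theorem follows.

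The crux, and the main obstacle, is the identification $\mathfrak{s}_k(A)=\lambda_1+\cdots+\lambda_k$, i.e. Greene's theorem for the weighted poset $[-r,r]^2$ — precisely the content of Du's generalization. I would establish it by two opposite inequalities. The lower bound is constructive: reading off $k$ suitable chains from the insertion tableau realizes a $k$-chain family of weight $\lambda_1+\cdots+\lambda_k$. The upper bound is the delicate one: for an arbitrary $k$-chain family its weight must be bounded by $\lambda_1+\cdots+\lambda_k$, which is carried out by a pigeonhole argument pairing the cells of a chain against the columns of the tableau, together with the Knuth-invariance of the shape under the insertion moves. Alternatively one can bypass RSK and prove the concavity inequality directly by an exchange argument: starting from optimal $(k+1)$- and $(k-1)$-chain families, one repeatedly resolves their overlaps into two $k$-chain families whose combined weight is at least $\mathfrak{s}_{k+1}(A)+\mathfrak{s}_{k-1}(A)$. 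Either way, the real work lies in this rearrangement/bookkeeping step; the rest of the proof is formal.
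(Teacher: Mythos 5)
First, note that the paper itself offers no proof of Theorem \ref{theor:d=d2=d000=du-cells}: it is quoted outright from \cite[Theorem 1.2]{Du96}, so there is no internal argument to measure your proposal against --- the honest comparison is with Du's proof, which (as the paper's own framing indicates) is a weighted generalization of Greene's poset theorem \cite{Gr79}. Judged on its own terms, your scaffolding is correct: monotonicity of $k\mapsto\mathfrak{s}_{k}(A)$ gives $\sigma_{i}(A)\geq 0$; the sum telescopes to $\mathfrak{s}_{n}(A)$; and $\mathfrak{s}_{n}(A)=2d+1$ holds, though you do not need Dilworth here --- the $n$ rows $\{(i,j) : j\in[-r,r]\}$, $i$ fixed, are already chains and partition $[-r,r]^{2}$. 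You also isolate the right crux: weak decrease of $\sigma(A)$ is exactly concavity of $k\mapsto\mathfrak{s}_{k}(A)$, and it follows from a Greene-type identity $\mathfrak{s}_{k}(A)=\lambda_{1}+\cdots+\lambda_{k}$ with $\lambda$ the RSK shape of the biword of $A$. One subtlety your sketch passes over: in the definition of $\mathfrak{s}_{k}(A)$ a cell is used at most once, in a single chain, with its full weight $a_{ij}$, whereas Greene's statement for the biword distributes individual letters among $k$ disjoint weakly increasing subsequences; the two maxima do agree, because a chain may absorb all copies of its cells and, conversely, a union of $k$ chains is always a disjoint union of at most $k$ chains (peel off $C_{i}\setminus(C_{1}\cup\cdots\cup C_{i-1})$, each a subset of a chain).

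The gap is that this crux is never proved: you name two strategies (RSK plus a pigeonhole/Knuth-invariance argument, or an exchange argument) and explicitly defer ``the real work.'' Since nonnegativity and the total sum are the trivial half of being a partition, the deferred step is the entire content of Du's theorem; what you have is a correct reduction together with a plausible plan, not a proof. The cleanest way to close it, staying within what the paper already cites, is to reduce to the unweighted poset theorem of \cite{Gr79}: replace each cell $(i,j)$ by a chain of $a_{ij}$ pairwise comparable copies, each copy comparable to the copies of other cells exactly as $(i,j)$ is. In the resulting finite poset $P_{A}$, the maximal size of a union of $k$ chains equals $\mathfrak{s}_{k}(A)$ --- one inequality by taking all copies of the cells in a $k$-chain family, the other by projecting $k$ chains of copies onto the cells they touch and invoking the disjointification remark above --- and Greene's theorem then says precisely that the increments of this sequence form a partition. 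This bypasses RSK and the rearrangement bookkeeping you were worried about, and it is presumably close to what Du actually does.
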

For each $A\in \Pi_{n, d}$, we can associate a permutation $y_{A}$ to it as follows. We first construct a pseudo-matrix $A_{+}=(c_{ij})$, where the $c_{ij}$'s are all sequences of numbers in $\{-d, -(d-1),\ldots,-1,0,1,\ldots,d-1,d\}$ such that the numbers $\{-d, -(d-1), \ldots, -1, 0, 1, \ldots, d-1, d\}$ in $A_{+}$ are arranged in the way that they are in the natural order when read from right to left inside the sequence and from right to left along the rows, followed by top to bottom down the successive rows, and there are $a_{ij}$ elements in each sequence $c_{ij}$. Then the permutation $y_{A}$ may be read off from $A_{+}$ by reading from left to right inside the sequences and from bottom to top up the columns, followed by left to right along the successive columns.

The following lemma can be proved in a similar way as \cite[Lemma 3.2]{Du96} (see also \cite[Lemma 2.2.2]{LL21}).
\begin{lem}\label{longest element}
For each $A=(a_{ij})=(ro(A), w_{A}^{+}, co(A))\in \Pi_{n, d}$, we have $y_{A}=w_{A}^{+}$, and moreover,
\begin{align}\label{b3}
\ell(w_{A}^{+})=d^{2}-\frac{1}{2}\sum_{(i,j)\in I}\Bigg(\sum\limits_{\substack{x< i\\y< j}}a_{ij}^{\natural}a_{xy}+\sum\limits_{\substack{x> i\\y> j}}a_{ij}^{\natural}a_{xy}\Bigg).
\end{align}
\end{lem}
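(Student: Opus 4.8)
The plan is to follow the strategy of \cite[Lemma 3.2]{Du96} and \cite[Lemma 2.2.2]{LL21}, adapting the combinatorics of the pseudo-matrix to the signed-permutation realization of $W_{B_d}$. First I would establish that $y_A$ lies in the double coset $W_\lambda w_A W_\mu$ with $\lambda = ro(A)$ and $\mu = co(A)$, by verifying the cardinality criterion of Lemma \ref{a3}(1). By construction the cell $(i,j)$ of the pseudo-matrix $A_{+}$ is filled with exactly $a_{ij}$ of the symbols in $\{-d,\dots,d\}$, and the two reading conventions (right-to-left/top-to-bottom to fill $A_{+}$, and bottom-to-top/left-to-right up the columns to read off $y_A$) are arranged so that $y_A$ sends precisely $a_{ij}$ elements of the column block $R_j^\mu$ into the row block $R_i^\lambda$. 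Hence $\sharp\big(R_i^\lambda \cap y_A(R_j^\mu)\big) = a_{ij}$ for all $i,j$, so $y_A$ and $w_A$ represent the same double coset.

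Next I would show that $y_A$ is the \emph{maximal}-length representative, i.e.\ $y_A = w_A^{+}$. The point is that the decreasing reading convention within each sequence $c_{ij}$, together with the chosen global filling order, is designed to invert every pair of positions that the double-coset structure permits to be inverted. Concretely, one checks that $y_A$ has $\ell(s\, y_A) < \ell(y_A)$ for every generator $s$ of $W_\lambda$ and $\ell(y_A\, s) < \ell(y_A)$ for every generator $s$ of $W_\mu$; having all generators of $W_\lambda$ as left descents and all generators of $W_\mu$ as right descents characterizes the unique longest element of $W_\lambda w_A W_\mu$, since any ascent within $W_\lambda$ on the left or $W_\mu$ on the right would contradict maximality.

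Finally I would compute $\ell(w_A^{+})$ by counting inversions of the signed permutation directly. Using the inversion statistic for $W_{B_d}$ and the identity $\ell(w) = \ell(w_0) - \#\{\text{non-inversions of }w\}$, together with the fact that the longest element $w_0$ has length $d^2$ (the number of positive roots in type $B_d$), it suffices to count the non-inversions of $w_A^{+}$. Since $w_A^{+}$ inverts every admissible pair, its non-inversions are exactly the pairs of elements whose source column blocks $R_y^\mu, R_j^\mu$ and target row blocks $R_x^\lambda, R_i^\lambda$ sit in concordant position ($x<i,y<j$ or $x>i,y>j$), each such pair of cells contributing $a_{ij}a_{xy}$; summing over $(i,j)\in I$ then yields the claimed double sum. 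The main obstacle is the type-$B$ bookkeeping: because $a_{ij}=a_{-i,-j}$ and $w_A^{+}(0)=0$, the inversions occur in symmetric pairs and the central cell $(0,0)$ must be counted with the half-integer weight $a_{00}^{\natural} = \tfrac12(a_{00}-1)$. Getting the global factor $\tfrac12$ and the restriction of the outer sum to the index set $I$ in \eqref{a1} exactly right, so that folding the inversion count of the full signed permutation under the central symmetry reproduces the sum over $I$, is the delicate part; I would cross-check the resulting identity against the explicit type-$A$ count of Du and the type-$B$ count of \cite[Lemma 2.2.2]{LL21}.
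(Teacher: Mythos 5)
Your proposal reaches the right conclusions, but your middle step is a genuinely different argument from the paper's. The paper never invokes descent sets in this proof: it establishes, for \emph{every} $w$ in the double coset determined by $A$, the upper bound \eqref{b1} on $4\ell(w)+2\sharp\{b\in[1,d]\:|\:w(b)<0\}$, by showing via Lemma \ref{a3} that membership in the double coset forces at least $N_1+N_2$ concordant pairs (inequalities \eqref{a4}--\eqref{a5}); it then checks by the cell-reading argument that $y_A$ attains this bound (equation \eqref{b2}), so that $y_A$ must be the unique longest element $w_A^{+}$, and the length formula \eqref{b3} falls out of the same bookkeeping. Your plan decouples identification from computation: you identify $y_A=w_A^{+}$ purely by descent sets, and only afterwards count non-inversions of $y_A$. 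Your length formula is correct: the paper's identity \eqref{a6} collapses, using $Q_w+\sharp\{b\in[1,d]\:|\:w(b)<0\}=d$, to $2\ell(w)+P_w=2d^2$, which is precisely your $\ell(w)=\ell(w_0)-\tfrac{1}{2}(\text{number of non-inversions})$; and the required count $P_{y_A}=N$ is exactly the paper's concordant-cell computation together with $R=Q_{y_A}$. What your route buys is that the inequality half \eqref{a4}--\eqref{b1} of the paper's proof becomes unnecessary; what it requires instead is a nontrivial double-coset fact, discussed next.

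The one gap you must close is the justification of the descent characterization. Your clause ``since any ascent within $W_\lambda$ on the left or $W_\mu$ on the right would contradict maximality'' proves only the easy direction: the longest element of $W_\lambda w_A W_\mu$ has every generator of $W_\lambda$ as a left descent and every generator of $W_\mu$ as a right descent. What your argument actually needs is the converse: an element of the double coset having all of these descents \emph{is} the longest element. This converse is true, but it is not a one-line maximality remark; it follows from the standard theory of parabolic double cosets (Kilmoyer's normal form plus Deodhar's lemma), or it can simply be cited: it is \cite[(1.2)(i)]{Cur85}, which is exactly the fact this paper itself invokes in the proof of Proposition \ref{lem:d=d2=d000ad-i}(3). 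With that reference (or a proof) supplied, and with your two ``one checks'' steps written out --- the descent verification for $y_A$ does follow from the monotone reading conventions of $A_{+}$, and the folding of the non-inversion count under the central symmetry does produce the $a_{ij}^{\natural}$ weights and the index set $I$ --- your argument is complete and yields \eqref{b3}.
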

\begin{proof}
Recall the two sets $I_{\mathfrak{a}}$ and $I$ in \eqref{a1}. We define a subset $I_{\mathfrak{a}}^{-}$ in $\mathbb{Z}\times \mathbb{Z}$ by
\begin{align*}
I_{\mathfrak{a}}^{-}=(\{0\}\times [-r, -1])\sqcup ([-r, -1]\times [-r, r]).
\end{align*}
We set
\begin{align*}
N=&\sum_{(i,j)\in I}\Bigg(\sum\limits_{\substack{x< i\\y< j}}a_{ij}^{\natural}a_{xy}+\sum\limits_{\substack{x> i\\y> j}}a_{ij}^{\natural}a_{xy}\Bigg)\\
=&\sum_{(i,j)\in I_{\mathfrak{a}}}\sum\limits_{\substack{x< i\\y< j}}a_{ij}a_{xy}+\sum_{(i,j)\in I_{\mathfrak{a}}}\sum\limits_{\substack{x> i\\y> j}}a_{ij}a_{xy}\\
&+\frac{1}{2}\sum\limits_{\substack{x< 0\\y< 0}}a_{00}a_{xy}+\frac{1}{2}\sum\limits_{\substack{x> 0\\y> 0}}a_{00}a_{xy}-\frac{1}{2}\sum\limits_{\substack{x< 0\\y< 0}}a_{xy}-\frac{1}{2}\sum\limits_{\substack{x> 0\\y> 0}}a_{xy}. \quad \text{ (by \eqref{a2})}
\end{align*}

Let $w$ be an element in the double coset $W_\lambda w_{A}^{+} W_\mu$ corresponding to $A$, where $\lambda=ro(A), \mu=co(A)$. By Lemma \ref{a3}, we have
\begin{align*}
\sharp \big(R_{i}^{\lambda}\cap w(R_{j}^{\mu})\big)= \sharp \big(R_{i}^{\lambda}\cap w_{A}^{+}(R_{j}^{\mu})\big)=a_{ij}~~\text{and}~~
\sharp \big(R_{x}^{\lambda}\cap w(R_{y}^{\mu})\big)= \sharp \big(R_{x}^{\lambda}\cap w_{A}^{+}(R_{y}^{\mu})\big)=a_{xy}.
\end{align*}
For any $-r\leq i< x\leq r$ and $-r\leq j< y\leq r$, we pick $c\in R_{i}^{\lambda}\cap w(R_{j}^{\mu})$ and $d\in R_{x}^{\lambda}\cap w(R_{y}^{\mu})$. Obviously, there are
\begin{align*}
N_1=\sum\limits_{\substack{-r\leq i< x\leq r\\-r\leq j< y\leq r}}a_{ij}a_{xy}
\end{align*}
ways to make such selection. Let $a=w^{-1}(c), b=w^{-1}(d)$. Then $-r\leq i< x\leq r$ and $-r\leq j< y\leq r$ imply that $-d\leq w(a)< w(b)\leq d$ and $-d\leq a< b\leq d$, respectively. So there are at least $N_1$ pairs $(a, b)$ satisfying $-d\leq a< b\leq d$ and $-d\leq w(a)< w(b)\leq d$, that is,
\begin{align}\label{a4}
N_1\leq \sharp \big\{(a, b)\in [-d, d]\times[-d, d]\:\big|\:a< b\text{ and }w(a)< w(b)\big\},
\end{align}
where $[-d, d]\subset \mathbb{Z}$ denotes the interval from $-d$ to $d$ in $\mathbb{Z}$. Similarly, we have
\begin{align}\label{a5}
N_2=\sum\limits_{\substack{-r\leq x< i\leq r\\-r\leq y< j\leq r}}a_{ij}a_{xy} \leq \sharp \big\{(a, b)\in [-d, d]\times[-d, d]\:\big|\:a> b\text{ and }w(a)> w(b)\big\}.
\end{align}

It is obvious that
\begin{align*}
N_1+N_2=&\sum_{(i,j)\in I_{\mathfrak{a}}}\sum\limits_{\substack{x> i\\y> j}}a_{ij}a_{xy}+ \sum_{(i,j)\in I_{\mathfrak{a}}^{-}}\sum\limits_{\substack{x> i\\y> j}}a_{ij}a_{xy}+\sum\limits_{\substack{x> 0\\y> 0}}a_{00}a_{xy}\\
&+\sum_{(i,j)\in I_{\mathfrak{a}}}\sum\limits_{\substack{x< i\\y< j}}a_{ij}a_{xy}+ \sum_{(i,j)\in I_{\mathfrak{a}}^{-}}\sum\limits_{\substack{x< i\\y< j}}a_{ij}a_{xy}+\sum\limits_{\substack{x< 0\\y< 0}}a_{00}a_{xy}.
\end{align*}
Since $A\in \Pi_{n, d}$, we have $a_{ij}=a_{-i,-j}$ for all $i,j$, which implies that
\begin{align*}
\sum_{(i,j)\in I_{\mathfrak{a}}}\sum\limits_{\substack{x> i\\y> j}}a_{ij}a_{xy}=\sum_{(i,j)\in I_{\mathfrak{a}}^{-}}\sum\limits_{\substack{x< i\\y< j}}a_{ij}a_{xy}, \quad \sum_{(i,j)\in I_{\mathfrak{a}}^{-}}\sum\limits_{\substack{x> i\\y> j}}a_{ij}a_{xy}=\sum_{(i,j)\in I_{\mathfrak{a}}}\sum\limits_{\substack{x< i\\y< j}}a_{ij}a_{xy}
\end{align*}
and
\begin{align*}
R=\sum\limits_{\substack{x> 0\\y> 0}}a_{xy}=\sum\limits_{\substack{x< 0\\y< 0}}a_{xy}.
\end{align*}
Therefore, we have $N_1+N_2=2N+2R$.

On the other hand, by $w(-i)=-w(i)$ for any $i\in [-d, d]$ we have
\begin{align*}
(2d+1)^{2}=&\sharp \big\{(a, b)\in [-d, d]\times[-d, d]\big\}\\
=&\sharp \Big\{ (a, b)\in [1, d]\times[-d, d] ~\Big|~ \substack{a< b\\w(a)> w(b)} \text{ or } \substack{a> b\\w(a)< w(b)} \Big\}\\
&+\sharp \Big\{ (a, b)\in [1, d]\times[-d, d] \:\Big|\: \substack{a< b\\w(a)< w(b)} \text{ or } \substack{a> b\\w(a)> w(b)} \Big\}\\
&+\sharp \big\{(a, b)\in [1, d]\times[-d, d]\hspace{0.3mm}\big|\hspace{0.3mm}a=b\big\}\hspace{-0.5mm}+\hspace{-0.5mm}\sharp \big\{(a, b)\in [-d, -1]\times[-d, d]\hspace{0.3mm}\big|\hspace{0.3mm}a=b\big\}\\
&+\sharp \Big\{ (a, b)\in [-d, -1]\times[-d, d] \:\Big|\: \substack{a< b\\w(a)> w(b)} \text{ or } \substack{a> b\\w(a)< w(b)} \Big\}\\
&+\sharp \Big\{ (a, b)\in [-d, -1]\times[-d, d] \:\Big|\: \substack{a< b\\w(a)< w(b)} \text{ or } \substack{a> b\\w(a)> w(b)} \Big\}\\
&+\sharp \Big\{ (a, b)\in \{0\}\times[-d, -1] \hspace{0.5mm}\Big|\hspace{0.5mm} \substack{0> b\\0> w(b)} \Big\}\hspace{-0.3mm}+\hspace{-0.3mm}\sharp \Big\{ (a, b)\in \{0\}\times[-d, -1] \hspace{0.5mm}\Big|\hspace{0.5mm} \substack{0> b\\0< w(b)} \Big\}\\
&+\sharp \Big\{ (a, b)\in \{0\}\times[1, d] \:\Big|\: \substack{0< b\\0> w(b)} \Big\}+ \sharp \Big\{ (a, b)\in \{0\}\times[1, d] \:\Big|\: \substack{0< b\\0< w(b)} \Big\}\\
&+\sharp \Big\{ (a, b)\in \{0\}\times\{0\} \Big\}\\
=&2\sharp \Big\{ (a, b)\in [1, d]\times[-d, d] \:\Big|\: \substack{a< b\\w(a)> w(b)} \text{ or } \substack{a> b\\w(a)< w(b)} \Big\}\\
&+2\sharp \Big\{ (a, b)\in [1, d]\times[-d, d] \:\Big|\: \substack{a< b\\w(a)< w(b)} \text{ or } \substack{a> b\\w(a)> w(b)} \Big\}\\
&+2\sharp \big\{b\in [1, d]\:\big|\:0> w(b)\big\}+2\sharp \big\{b\in [1, d]\:\big|\:0< w(b) \big\}+2d+1.
\end{align*}
By \cite[(2.1.7)]{LL21}, we have
\begin{align*}
2\ell(w)=\sharp \Big\{ (a, b)\in [1, d]\times[-d, d] \:\Big|\: \substack{a< b\\w(a)> w(b)} \text{ or } \substack{a> b\\w(a)< w(b)} \Big\}.
\end{align*}
We set
\begin{align*}
P_{w}\hspace{-0.15mm}=\hspace{-0.15mm}\sharp \Big\{ (a, b)\in [1, d]\hspace{-0.3mm}\times\hspace{-0.3mm}[-d, d] \hspace{0.3mm}\Big|\hspace{0.3mm} \substack{a< b\\w(a)< w(b)} \text{ or } \substack{a> b\\w(a)> w(b)} \Big\}~~\text{and}~~ Q_{w}\hspace{-0.15mm}=\hspace{-0.15mm}\sharp \big\{b\in [1, d]\hspace{0.3mm}\big|\hspace{0.3mm}0< w(b) \big\}.
\end{align*}
Therefore, we have
\begin{align}\label{a6}
4d^{2}+2d=4\ell(w)+2P_{w}+2Q_{w}+2\sharp \big\{b\in [1, d]\:\big|\:0> w(b)\big\}.
\end{align}

By \eqref{a4} and \eqref{a5}, we have
\begin{align*}
N_1+N_2\leq & \sharp \big\{(a, b)\in [-d, d]\times[-d, d]\:\big|\:a< b\text{ and }w(a)< w(b)\big\} \\
&+\sharp \big\{(a, b)\in [-d, d]\times[-d, d]\:\big|\:a> b\text{ and }w(a)> w(b)\big\}=2P_{w}+2Q_{w}.
\end{align*}
Combining $N_1+N_2=2N+2R$ with \eqref{a6}, we obtain
\begin{align}\label{b1}
4\ell(w)+2\sharp \big\{b\in [1, d]\:\big|\:0> w(b)\big\}+2N+2R\leq 4d^{2}+2d.
\end{align}

Next we shall prove that
\begin{align}\label{b2}
y_{A}\in W_\lambda w_{A}^{+} W_\mu\quad\text{ and }\quad 2P_{y_{A}}+2Q_{y_{A}}=N_1+N_2.
\end{align}
Let $C_{ij}=C_{ij}(A)$ denote the set of elements in the sequence $c_{ij}$. From the definition of $y_{A}$, we see that
\begin{align*}
\bigcup_{-r\leq l\leq r}C_{il}=R_{i}^{\lambda}\quad\text{and}\quad \bigcup_{-r\leq k\leq r}C_{kj}=y_{A}(R_{j}^{\mu}).
\end{align*}
Thus, we have $C_{ij}=R_{i}^{\lambda}\cap y_{A}(R_{j}^{\mu})$, and hence $a_{ij}=\sharp \big(R_{i}^{\lambda}\cap y_{A}(R_{j}^{\mu})\big)$, which implies that $y_{A}\in W_\lambda w_{A}^{+} W_\mu$ by Lemma \ref{a3}.

Note that $y_{A}^{-1}(C_{ij}(A))=C_{ji}(A^{\mathrm{tr}})$, where $A^{\mathrm{tr}}$ denotes the transpose of $A$. Consider $a, b$ such that $-d\leq a< b\leq d$ and $y_{A}(a)< y_{A}(b)$. There exist $i, j$ such that $y_{A}(a)\in C_{ij}$ and $x, y$ such that $y_{A}(b)\in C_{xy}$. Note that $a\in C_{ji}(A^{\mathrm{tr}})$ and $b\in C_{yx}(A^{\mathrm{tr}})$. Since $a< b$, we have either $j< y$ or $j=y$ and $i> x$, while since $y_{A}(a)< y_{A}(b)$, we have either $i< x$ or $i=x$ and $j> y$. These together imply that $i< x$ and $j< y$. Thus, $\sharp \big\{(a, b)\in [-d, d]\times[-d, d]\:\big|\:a< b\text{ and }y_{A}(a)< y_{A}(b)\big\}\leq N_1$. By \eqref{a4} we obtain that $\sharp \big\{(a, b)\in [-d, d]\times[-d, d]\:\big|\:a< b\text{ and }y_{A}(a)< y_{A}(b)\big\}=N_1$. Similarly, we have $\sharp \big\{(a, b)\in [-d, d]\times[-d, d]\:\big|\:a> b\text{ and }y_{A}(a)> y_{A}(b)\big\}=N_2$. Therefore, we have $2P_{y_{A}}+2Q_{y_{A}}=N_1+N_2$.

Combining $N_1+N_2=2N+2R$ with \eqref{a6}, \eqref{b1} and \eqref{b2}, we obtain
\begin{align*}
4\ell(w_{A}^{+})+2\sharp \big\{b\in [1, d]&\:\big|\:0> w_{A}^{+}(b)\big\}+2N+2R\\
&\leq 4\ell(y_{A})+2\sharp \big\{b\in [1, d]\:\big|\:0> y_{A}(b)\big\}+2N+2R,
\end{align*}
which implies that $y_{A}=w_{A}^{+}$ by the uniqueness of the longest element in the double coset $W_\lambda w_{A}^{+} W_\mu$ (cf. \cite[Proposition 9.15(e)]{Lu03}). From the definition of $y_{A}$, we see that $R=Q_{y_{A}}$. Combining this with $N_1+N_2=2N+2R$, \eqref{a6} and \eqref{b2}, we obtain
\begin{align*}
4d^{2}+2d=&4\ell(y_{A})+2P_{y_{A}}+2Q_{y_{A}}+2\sharp \big\{b\in [1, d]\:\big|\:0> y_{A}(b)\big\}\\
=&4\ell(y_{A})+2N+2Q_{y_{A}}+2\sharp \big\{b\in [1, d]\:\big|\:0> y_{A}(b)\big\}\\
=&4\ell(y_{A})+2N+2d,
\end{align*}
that is, $\ell(w_{A}^{+})=\ell(y_{A})=d^{2}-\frac{1}{2}N$. We are done.
\end{proof}

\begin{example}\label{b4}
Assume that $n=3, r=1$ and $d=2$. By \cite[Lemma 2.2]{BKLW18}, we have $\sharp\Pi_{3, 2}={2+2+2 \choose 2}=15$. We completely describe the $15$ elements in $\Pi_{3, 2}$ as follows:
\begin{align*}
A_1=\left(\hspace{-1mm}
 \begin{array}{ccc}
 2 & 0 & 0\\
 0 & 1 & 0\\
 0&  0 & 2\\
 \end{array}
\hspace{-1mm}\right),~~
A_2=\left(\hspace{-1mm}
 \begin{array}{ccc}
 0 & 2 & 0\\
 0 & 1 & 0\\
 0&  2 & 0\\
 \end{array}
\hspace{-1mm}\right),~~
A_3=\left(\hspace{-1mm}
 \begin{array}{ccc}
 0 & 0 & 2\\
 0 & 1 & 0\\
 2&  0 & 0\\
 \end{array}
\hspace{-1mm}\right),~~
A_4=\left(\hspace{-1mm}
 \begin{array}{ccc}
 0 & 0 & 0\\
 2 & 1 & 2\\
 0&  0 & 0\\
 \end{array}
\hspace{-1mm}\right),
\end{align*}
\begin{align*}
A_5=\left(\hspace{-1mm}
 \begin{array}{ccc}
 1 & 1 & 0\\
 0 & 1 & 0\\
 0&  1 & 1\\
 \end{array}
\hspace{-1mm}\right),~~
A_6=\left(\hspace{-1mm}
 \begin{array}{ccc}
 1 & 0 & 1\\
 0 & 1 & 0\\
 1&  0 & 1\\
 \end{array}
\hspace{-1mm}\right),~~
A_7=\left(\hspace{-1mm}
 \begin{array}{ccc}
 1 & 0 & 0\\
 1 & 1 & 1\\
 0&  0 & 1\\
 \end{array}
\hspace{-1mm}\right),~~
A_8=\left(\hspace{-1mm}
 \begin{array}{ccc}
 0 & 1 & 1\\
 0 & 1 & 0\\
 1&  1 & 0\\
 \end{array}
\hspace{-1mm}\right),
\end{align*}
\begin{align*}
A_9=\left(\hspace{-1mm}
 \begin{array}{ccc}
 0 & 1 & 0\\
 1 & 1 & 1\\
 0&  1 & 0\\
 \end{array}
\hspace{-1mm}\right),~~
A_{10}=\left(\hspace{-1mm}
 \begin{array}{ccc}
 0 & 0 & 1\\
 1 & 1 & 1\\
 1&  0 & 0\\
 \end{array}
\hspace{-1mm}\right),~~
A_{11}=\left(\hspace{-1mm}
 \begin{array}{ccc}
 1 & 0 & 0\\
 0 & 3 & 0\\
 0&  0 & 1\\
 \end{array}
\hspace{-1mm}\right),~~
A_{12}=\left(\hspace{-1mm}
 \begin{array}{ccc}
 0 & 1 & 0\\
 0 & 3 & 0\\
 0&  1 & 0\\
 \end{array}
\hspace{-1mm}\right),
\end{align*}
\begin{align*}
A_{13}=\left(\hspace{-1mm}
 \begin{array}{ccc}
 0 & 0 & 1\\
 0 & 3 & 0\\
 1&  0 & 0\\
 \end{array}
\hspace{-1mm}\right),~~
A_{14}=\left(\hspace{-1mm}
 \begin{array}{ccc}
 0 & 0 & 0\\
 1 & 3 & 1\\
 0&  0 & 0\\
 \end{array}
\hspace{-1mm}\right),~~
A_{15}=\left(\hspace{-1mm}
 \begin{array}{ccc}
 0 & 0 & 0\\
 0 & 5 & 0\\
 0&  0 & 0\\
 \end{array}
\hspace{-1mm}\right).
\end{align*}

From the definition of $y_{A_{i}}$, we can obtain
\begin{align*}
y_{A_1}=&s_1,\quad y_{A_2}=y_{A_3}=y_{A_4}=y_{A_8}=y_{A_{10}}=y_{A_{12}}=y_{A_{13}}=y_{A_{14}}=y_{A_{15}}=s_{0}s_1s_{0}s_1,\\
&y_{A_5}=s_1s_{0},\quad y_{A_6}=s_1s_{0}s_1, \quad y_{A_7}=s_{0}s_1, \quad y_{A_9}=s_0s_{1}s_0,\quad y_{A_{11}}=s_{0}.
\end{align*}
We have $\Lambda^{\jmath}(3, 2)=\big\{\lambda_1=(0,5,0), \lambda_2=(1,3,1), \lambda_3=(2,1,2)\big\}$; correspondingly, we have $W_{\lambda_1}=W_{B_d}$, $W_{\lambda_2}=\{e, s_{0}\}$ and $W_{\lambda_3}=\{e, s_{1}\}$. By \cite[Lemma 2.2.1]{LL21}, we can compute each $w_{A_{i}}$ for $1\leq i\leq 15$. Therefore, we obtain the longest element $w_{A_{i}}^{+}$ in each double coset $W_{ro(A_{i})} w_{A_{i}} W_{co(A_{i})}$ corresponding to $A_{i}$. By a direct calculation, we can verify that for each $i$, $y_{A_{i}}=w_{A_{i}}^{+}$, and moreover, $\ell(w_{A_{i}}^{+})$ coincides with the integer calculated via \eqref{b3}. We shall present the case $i=5$ and leave the remaining ones to the reader. We have $ro(A_{5})=\lambda_3$, $co(A_{5})=\lambda_2$ and $w_{A_{5}}=e$. Therefore, we obtain $w_{A_{5}}^{+}=s_1s_{0}=y_{A_{5}}$. Moreover,
\begin{align*}
2^{2}-\frac{1}{2}\big(a_{00}^{\natural}(1+1)+a_{10}^{\natural}(1+0)+a_{11}^{\natural}(1+1+0+1)\big)=2=\ell(w_{A_{5}}^{+}).
\end{align*}
\end{example}

The next lemma easily follows from \cite[\S3.5, Proof of Theorem 2.1]{Du96}.
\begin{lem}\label{partition-longest element}
For each $A=(ro(A), w_{A}^{+}, co(A))\in \Pi_{n, d}$, we have $\sigma(A)=PT(w_{A}^{+})$.
\end{lem}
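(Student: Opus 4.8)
The plan is to recognize the asserted identity as the type-$B_d$ avatar of Du's theorem \cite[Theorem 2.1]{Du96} and to reduce to it directly. Recall that $\Pi_{n,d}$ sits inside the set $\Theta_{n,d}$ of all $n\times n$ nonnegative integer matrices of entry sum $2d+1$, and that both ingredients of the statement---the partition $\sigma(A)$ produced by the $k$-chain family maxima of Theorem \ref{theor:d=d2=d000=du-cells}, and the permutation $y_A$ read off the pseudo-matrix $A_{+}$---are defined by exactly the same combinatorial recipes Du uses for matrices in $\Theta_{n,d}$, with $2d+1$ playing the role of his total weight. Crucially, neither definition refers to the symmetry $a_{ij}=a_{-i,-j}$ cutting out $\Pi_{n,d}$; that constraint only restricts which matrices are in play. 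Thus it suffices to prove the identity for all $A\in\Theta_{n,d}$, which is the content of Du's theorem.

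First I would spell out the dictionary underlying Du's argument in \cite[\S3.5]{Du96}, which rests on Greene's theorem \cite{Gr79}. Under the partial order on $[-r,r]^2$ (where $(i,j)\leq(i',j')$ iff $i\geq i'$ and $j\leq j'$) and the reading order defining $y_A$, a $k$-chain family $F$ of maximal $F$-sum corresponds to a system of $k$ pairwise disjoint monotone subsequences of $y_A\in\mathfrak{S}_{2d+1}$ of maximal total length, and under this correspondence $\mathfrak{s}_k(A)$ equals that total length. Greene's theorem then identifies the increasing sequence $\big(\mathfrak{s}_1(A),\mathfrak{s}_2(A),\ldots\big)$ with the sequence of partial sums of the shape $PT(y_A)$; taking successive differences and invoking the convention $\mathfrak{s}_0(A)=0$ yields $\sigma_i(A)=PT(y_A)_i$ for every $i$, that is, $\sigma(A)=PT(y_A)$.

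Finally, I would feed in Lemma \ref{longest element}, which already establishes $y_A=w_A^{+}$. Combining this with the equality $\sigma(A)=PT(y_A)$ from the previous step gives $\sigma(A)=PT(w_A^{+})$, as desired.

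The hard part will be the dictionary between $k$-chain families and unions of monotone subsequences: one must confirm that the right-to-left-within-sequences, along-rows, top-to-bottom reading used to build the pseudo-matrix $A_{+}$, together with the transpose reading that produces $y_A$, is precisely aligned with both the chosen partial order on $[-r,r]^2$ and the decreasing-row-and-column insertion convention fixed for the Robinson--Schensted map, so that Greene's theorem applies verbatim. This compatibility is exactly what Du verifies in \cite[\S3.5]{Du96}; since our $\sigma(A)$ and $y_A$ are literal instances of his constructions (only the ambient weight $2d+1$ and the passage to the subset $\Pi_{n,d}\subseteq\Theta_{n,d}$ differ), the verification transfers without modification, and this is why the lemma follows so quickly.
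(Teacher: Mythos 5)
Your proposal is correct and follows essentially the same route as the paper: both reduce the statement to Du's type-$A$ combinatorics (the dictionary between maximal $k$-chain families and disjoint order-reversed subsequences from \cite[\S3.5]{Du96}, i.e.\ Greene's theorem) and then invoke Lemma \ref{longest element} to replace $y_{A}$ by $w_{A}^{+}$. The only cosmetic difference is that the paper routes the identification of the chain-family partition with the Robinson--Schensted shape through \cite[\S7]{Lu85b} and \cite[Chapter 21]{Shi86}, whereas you cite Greene's theorem \cite{Gr79} directly and make explicit the (valid) observation that the identity $\sigma(A)=PT(y_{A})$ needs no symmetry assumption on $A$.
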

\begin{proof}
From Lemma \ref{longest element} and the proof of \cite[\S3.5, Theorem 2.1]{Du96}, we see that the number $\mathfrak{s}_{k}(A)$ defined above is the maximal cardinality of a subset of $[-d, d]$ which is a disjoint union of $k$ subsets each of which has its natural order reversed by $w_{A}^{+}$. Thus, we see that the partition $\sigma(A)$ is exactly the partition associated to $w_{A}^{+}$ as defined in \cite[\S7]{Lu85b}, recalling that $w_{A}^{+}$ is considered as an element of $\mathfrak{S}_{2d+1}$. From this and \cite[Chapter 21]{Shi86}, we have $\sigma(A)=PT(w_{A}^{+})$.
\end{proof}

\begin{example}
Let
\begin{align*}
A=\left(\hspace{-1mm}
 \begin{array}{ccc}
 2 & 1 & 1\\
 2 & 3 & 2\\
 1&  1 & 2\\
 \end{array}
\hspace{-1mm}\right)\in \Pi_{3, 7}.
\end{align*}
Then $$A_{+}=\left(\hspace{-1.5mm}
 \begin{array}{ccc}
 (-4,-5) & (-6) & (-7)\\
 (3,2) & (1,0,-1) & (-2,-3)\\
 (7)&  (6) & (5,4)\\
 \end{array}
\hspace{-1.5mm}\right),$$ and the associated $y_{A}$ is
\begin{equation*}  \label{dim:Sirrep1-waplus}
y_{A}=\left(\hspace{-1.6mm}
 \begin{array}{ccccccccccccccccccccccccc}
 -7&-6&-5&-4&-3&-2&-1&0&1&2&3&4&5&6&7\\
 7&3&2&-4&-5&6&1&0&-1&-6&5&4&-2&-3&-7\\
\end{array}
\hspace{-1.6mm}\right).
\end{equation*}
Thus, we have $y_{A}=w_{A}^{+}$ by Lemma \ref{longest element}. We can compute $\mathfrak{s}_{1}(A)=1+2+3+2+1=9$, $\mathfrak{s}_{2}(A)=1+2+2+1+1+1+3+2=13$, $\mathfrak{s}_{3}(A)=15$. By Theorem \ref{theor:d=d2=d000=du-cells}, we have $\sigma(A)=(9,4,2)$; on the other hand, using the Robinson--Schensted algorithm we deduce that $PT(w_{A}^{+})=PT(y_{A})$ also equals $(9,4,2)$.

By a similar calculation, for each $A_{i}$ ($1\leq i\leq 15$) in Example \ref{b4}, one can obtain $\sigma(A_{i})=PT(w_{A_{i}}^{+})$. We leave it as an exercise to the reader.
\end{example}

Combining Propositions \ref{lem:d=d2=d000ad} and \ref{theor:d=d2=d000adadc} with Lemma \ref{partition-longest element}, we can get the following theorem.
\begin{thm}
\label{theor:charac of cells in Schur algs}
For any two elements $A, A'\in \Pi_{n, d}$, we have $\{A\}\sim_{LR} \{A'\}$ if and only if $\sigma(A)\approx \sigma(A')$.
\end{thm}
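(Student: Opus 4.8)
The plan is to prove the theorem as a direct chain of three equivalences, each of which has already been established in the excerpt, thereby reducing the two-sided cell relation on the canonical basis $\{\{A\}\}$ of $S^{\jmath}(n,d)$ to a purely combinatorial comparison of the partitions $\sigma(A)$ and $\sigma(A')$. The strategy is to pass first from the Schur algebra to the Iwahori--Hecke algebra $\HH$ via the longest double-coset representatives $w_A^{+}$, then from $\HH$ to Robinson--Schensted shapes via the Barbasch--Vogan description, and finally to reidentify those shapes with Du's chain-family partitions.

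Concretely, I would first invoke Proposition \ref{lem:d=d2=d000ad}(3) to translate the relation $\{A\}\sim_{LR}\{A'\}$ in $S^{\jmath}(n,d)$ into the relation $w_A^{+}\sim_{LR} w_{A'}^{+}$ in $W_{B_d}$; this is the step that transfers the cell structure from the Schur algebra to the Hecke algebra, exploiting that the two-sided relation forgets the row- and column-type data $ro$ and $co$ and retains only the longest-element information. Next I would apply Proposition \ref{theor:d=d2=d000adadc}, the theorem of Barbasch and Vogan, to obtain $w_A^{+}\sim_{LR} w_{A'}^{+}$ if and only if $PT(w_A^{+})\approx PT(w_{A'}^{+})$, where $PT$ denotes the Robinson--Schensted shape after identifying $W_{B_d}$ with a subgroup of $\mathfrak{S}_{2d+1}$. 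Finally I would substitute the identification $\sigma(A)=PT(w_A^{+})$ supplied by Lemma \ref{partition-longest element} (and the same for $A'$), which immediately yields $\{A\}\sim_{LR}\{A'\}\iff\sigma(A)\approx\sigma(A')$, as desired.

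Since every link in this chain is an established equivalence, the final theorem carries no real obstacle of its own; the substantive difficulty lies entirely in the preparatory results it combines. I expect the genuine crux to be Lemma \ref{partition-longest element}, namely the identification of the chain-family partition $\sigma(A)$ of Theorem \ref{theor:d=d2=d000=du-cells} with the Robinson--Schensted shape $PT(w_A^{+})$: this rests on the length formula and the explicit permutation $y_A=w_A^{+}$ of Lemma \ref{longest element}, together with Du's interpretation of $\mathfrak{s}_k(A)$ as the maximal size of a $k$-fold disjoint union of order-reversed subsets of $[-d,d]$. The secondary difficulty is Proposition \ref{lem:d=d2=d000ad}(3) itself, whose validity depends on the positivity of the structure constants $g^{C}_{A,B}\in\mathbb{N}[q,q^{-1}]$ so that Lemma \ref{another-charact-cells} applies. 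Granting these, the theorem follows formally.
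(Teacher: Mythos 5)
Your proposal is correct and follows essentially the same route as the paper: the paper's proof is precisely the three-step chain you describe, citing Proposition \ref{lem:d=d2=d000ad}(3), then Proposition \ref{theor:d=d2=d000adadc}, then Lemma \ref{partition-longest element}, in that order. Your added observation that the real content lies in the preparatory results (especially Lemma \ref{partition-longest element} via Lemma \ref{longest element}, and the positivity underlying Proposition \ref{lem:d=d2=d000ad}) is also consistent with how the paper structures the argument.
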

\begin{proof}
By Proposition \ref{lem:d=d2=d000ad}(3), we see $\{A\}\sim_{LR} \{A'\}$ if and only if $w_{A}^{+}\sim_{LR} w_{A'}^{+}$. By Proposition \ref{theor:d=d2=d000adadc}, we have $w_{A}^{+}\sim_{LR} w_{A'}^{+}$ if and only if $PT(w_{A}^{+})\approx PT(w_{A'}^{+})$. By Lemma \ref{partition-longest element}, we know $\sigma(A)=PT(w_{A}^{+})$ and $\sigma(A')=PT(w_{A'}^{+})$. Thus, $\{A\}\sim_{LR} \{A'\}$ if and only if $\sigma(A)\approx \sigma(A')$.
\end{proof}

\begin{rem}\label{remark:onetoone correspondence}
Let $\mathcal{P}_{d}^{n}$ be the subset of $\mathcal{P}_{d}$ consisting of partitions in $\mathcal{P}_{d}$ with at most $n$ parts. By Theorem \ref{theor:d=d2=d000=du-cells}, we have a map $\sigma :A\mapsto \sigma(A)=(\sigma_{1}(A), \sigma_{2}(A), \ldots, \sigma_{n}(A))$, which is from $\Pi_{n, d}$ to the set of partitions of $2d+1$ with at most $n$ parts. We can further show that $\sigma^{-1}(\lambda)$ is nonempty for each $\lambda\in \mathcal{P}_{d}^{n}$. From this and Theorem \ref{theor:charac of cells in Schur algs}, we see that there is a one-to-one correspondence between the set of two-sided cells in $S^{\jmath}(n, d)$ and special partitions of $2d+1$ with at most $n$ parts.
\end{rem}

\begin{example}\label{d1}
Let us return to Example \ref{b4}. By Theorem \ref{theor:d=d2=d000=du-cells}, we have
\begin{align*}
\sigma(A_{1})&=(2,2,1),\quad \sigma(A_5)=\sigma(A_6)=\sigma(A_7)=\sigma(A_{11})=(3,1,1),\quad \sigma(A_9)=(3,2),\\
\sigma(A_2)=&\sigma(A_3)=\sigma(A_4)=\sigma(A_8)=\sigma(A_{10})=\sigma(A_{12})=\sigma(A_{13})=\sigma(A_{14})=\sigma(A_{15})=(5).
\end{align*}

Since $(2,2,1)=par[\Lambda_{1}]$, $(3,1,1)=par[\Lambda_{2}]$, $(3,2)=par[\Lambda_{3}]$, where $\Lambda_{1}={0<1 \choose 2}$, $\Lambda_{2}={0<2 \choose 1}$, $\Lambda_{3}={1<2 \choose 0}$, and $\{0, 1, 2\}, \{0, 2, 1\}, \{1, 2, 0\}$ are mutually permuted, we have $[\Lambda_{1}]\approx [\Lambda_{2}]\approx [\Lambda_{3}]$, and hence $(2,2,1)\approx(3,1,1)\approx(3,2)$. Thus, by Theorem \ref{theor:charac of cells in Schur algs}, $\{A_{1}\},  \{A_5\}, \{A_6\}, \{A_7\}, \{A_9\}, \{A_{11}\}$ lie in the same two-sided cell $\mathfrak{C}_1$.

Since $(5)=par[\Lambda_{4}]$, where $\Lambda_{4}={0<3 \choose 0}$, and $\{0, 3, 0\}$ is not a permutation of $\{0, 1, 2\}$, we have $[\Lambda_{4}]\not\approx[\Lambda_{1}]$, and hence $(5)\not\approx (2,2,1)$. Therefore, we see that $\{A_2\}, \{A_3\}, \{A_4\}, \{A_8\}, \{A_{10}\}, \{A_{12}\}, \{A_{13}\}, \{A_{14}\}, \{A_{15}\}$ lie in a two-sided cell $\mathfrak{C}_2$ different from $\mathfrak{C}_1$.

It is easy to see that there are exactly three special partitions of $5$: $(5)$, $(3,1,1)$, $(1,1,1,1,1)$, and the special partitions of $5$ with at most $3$ parts are $(5)$ and $(3,1,1)$. Therefore, there is a bijection between the set of two-sided cells in $S^{\jmath}(3, 2)$ and special partitions of $5$ with at most $3$ parts.
\end{example}

\subsection{Number of left cells in a two-sided cell of $S^{\jmath}(n, d)$}
For each $\lambda\in \mathcal{SP}_{d}$, we denote by $\mathcal{O}_{\lambda}$ the corresponding {\em special} nilpotent orbit, and $\mathbf{c}_{\lambda}$ the associated two-sided cell in $\HH$. We have the Springer representation Sp$(\mathcal{O}_{\lambda})$ of $W_{B_d}$ attached to $\mathcal{O}_{\lambda}$ (and the trivial representation of its fundamental group). Then $\{\mathrm{Sp}(\mathcal{O}_{\lambda})\:|\:\lambda\in \mathcal{SP}_{d}\}$ forms a complete set of special representations of $W_{B_d}$ in the sense of Lusztig (see \cite{Lu79} and also \cite{Lu84}). Moreover, the number of left cells in $\mathbf{c}_{\lambda}$ equals the dimension of Sp$(\mathcal{O}_{\lambda})$. On the other hand, the number of orbital varieties (following Joseph) contained in $\mathcal{O}_{\lambda}$ equals the dimension of Sp$(\mathcal{O}_{\lambda})$. By \cite[Theorem 1]{Mc99}, this number is also equal to the number of standard domino tableaux of shape $\lambda$. Thus, we see that for any $\lambda\in \mathcal{SP}_{d}$, the number of left cells in $\mathbf{c}_{\lambda}$ equals that of standard domino tableaux of shape $\lambda$.

We denote by $\mathcal{SP}_{d}^{n}$ the set of special partitions of $2d+1$ with at most $n$ parts. For each $\lambda\in \mathcal{SP}_{d}^{n}$, we denote by $\mathbf{c}_{\lambda}^{\jmath}$ the associated two-sided cell in $S^{\jmath}(n, d)$ by Remark \ref{remark:onetoone correspondence}. For any $w\in W_{B_d}$, let $\mathcal{R}(w)=\{s\in S\:|\:ws<w\}$. It is known that $\mathcal{R}(x)=\mathcal{R}(y)$ if $x\sim_{L} y$; for $\Gamma$ a left cell in $\HH$, we shall write $\mathcal{R}(\Gamma)$ for the set $\mathcal{R}(w)$, where $w$ is any element of $\Gamma$. The number of left cells in $S^{\jmath}(n, d)$ which correspond to $\Gamma$ is the number of $\lambda\in \Lambda^{\jmath}(n, d)$ for which the simple reflections of $W_{\lambda}$ lie in $\mathcal{R}(\Gamma)$ by Proposition \ref{lem:d=d2=d000ad}. The following gives a conjecture on the number of left cells in a two-sided cell of $S^{\jmath}(n, d)$.
\begin{conj}
\label{theor:d=d2=d000adadcnumber}
The number of left cells in $\mathbf{c}_{\lambda}^{\jmath}$ equals the number of semistandard domino tableaux of shape $\lambda$ with all entries in the dominoes $\leq r+1$ and the entry in the monomino $1$.
\end{conj}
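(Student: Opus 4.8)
The plan is to follow the type $A$ strategy of Du \cite{Du96}, replacing ordinary (semi)standard Young tableaux by domino tableaux throughout, and to reduce the enumeration of left cells in $\mathbf{c}_{\lambda}^{\jmath}$ to a combinatorial identity relating standard and semistandard domino tableaux of shape $\lambda$.

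First I would rewrite the number of left cells in $\mathbf{c}_{\lambda}^{\jmath}$ as a sum over the left cells of $\HH$ inside the two-sided cell $\mathbf{c}_{\lambda}$. By Proposition \ref{lem:d=d2=d000ad}(2), a left cell of $S^{\jmath}(n,d)$ contained in $\mathbf{c}_{\lambda}^{\jmath}$ is recorded by a pair $(\Gamma,\mu)$, where $\Gamma\subseteq\mathbf{c}_{\lambda}$ is the common left cell of $\HH$ of the elements $w_{A}^{+}$ and $\mu\in\Lambda^{\jmath}(n,d)$ is the common value $co(A)$. As recalled just before the conjecture, such a pair occurs precisely when the simple reflections of $W_{\mu}$ are right descents of $w_{A}^{+}$, that is, when $W_{\mu}\subseteq\mathcal{R}(\Gamma)$ (the set $\mathcal{R}$ being constant along a left cell). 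Hence
\begin{align*}
\#\{\text{left cells in }\mathbf{c}_{\lambda}^{\jmath}\}=\sum_{\Gamma\subseteq\mathbf{c}_{\lambda}}\#\bigl\{\mu\in\Lambda^{\jmath}(n,d)\mid W_{\mu}\subseteq\mathcal{R}(\Gamma)\bigr\},
\end{align*}
the sum running over the left cells $\Gamma$ of $\HH$ contained in $\mathbf{c}_{\lambda}$.

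Next I would set up the semistandard side via domino standardization. By \cite[Theorem 1]{Mc99}, used above, the left cells $\Gamma\subseteq\mathbf{c}_{\lambda}$ are in bijection with standard domino tableaux of shape $\lambda$. On the other hand, in a semistandard domino tableau of shape $\lambda$ with monomino entry $1$ and domino entries in $\{1,\dots,r+1\}$, the cells carrying a fixed entry $k\geq 2$ all lie in dominoes, so their number is even; reading off $\mu_{0}$ as the number of entry-$1$ cells and $\mu_{j}=\mu_{-j}$ $(1\le j\le r)$ as half the number of entry-$(j+1)$ cells gives a bijection between the admissible content vectors and the column types $\mu\in\Lambda^{\jmath}(n,d)$. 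Standardizing such a tableau (breaking ties among equal entries from left to right along rows) produces a standard domino tableau, so the whole problem reduces to proving the domino analogue of the classical Kostka identity
\begin{align*}
\#\{\text{semistandard domino tableaux of shape }\lambda\text{ and content }\mu\}=\#\bigl\{\Gamma\subseteq\mathbf{c}_{\lambda}\mid W_{\mu}\subseteq\mathcal{R}(\Gamma)\bigr\}.
\end{align*}
Summing this over all $\mu\in\Lambda^{\jmath}(n,d)$ then yields the conjecture.

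The crux, and the step I expect to be the main obstacle, is to translate the Coxeter-theoretic condition $W_{\mu}\subseteq\mathcal{R}(\Gamma)$ into the row-weak/column-strict inequalities defining a semistandard filling. For this I would identify $\mathcal{R}(\Gamma)$, for a left cell $\Gamma$ in type $B_{d}$, with the descent ($\tau$-invariant) statistic of the corresponding standard domino tableau, invoking Garfinkle's parametrization of cells in type $B/C$ by domino tableaux and her computation of $\tau$-invariants. One must then match the cut reflections $s_{\lfloor\mu_{0}/2\rfloor},s_{\lfloor\mu_{0}/2\rfloor+\mu_{1}},\dots$ generating $W_{\mu}$ against the positions at which consecutive entries of a semistandard domino tableau of content $\mu$ are forced to increase. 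The special node $s_{0}$ and the monomino make this bookkeeping delicate: the monomino is forced to carry the entry $1$ precisely because it feeds the odd part $\mu_{0}$, and whether $s_{0}\in\mathcal{R}(\Gamma)$ depends on the placement of the lowest domino meeting the first column, which must be shown to survive standardization. Assembling this descent dictionary together with the standardization bijection into the displayed identity is exactly what remains unverified, and is the reason the statement is posed as a conjecture.
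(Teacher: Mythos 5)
This statement is posed in the paper as a \emph{conjecture}: the paper contains no proof of it, so the only question is whether your argument actually closes it, and it does not. Your first two steps are correct but are essentially restatements of what the paper itself assembles in the paragraph immediately preceding the conjecture: the classification of left cells of $S^{\jmath}(n,d)$ by pairs $(\Gamma,\mu)$ with $\Gamma$ a left cell of $\HH$ and $\mu=co(A)$ comes from Proposition \ref{lem:d=d2=d000ad}(2), the counting criterion ``the simple reflections of $W_{\mu}$ lie in $\mathcal{R}(\Gamma)$'' is asserted there verbatim, and the identification of the number of left cells in $\mathbf{c}_{\lambda}$ with the number of standard domino tableaux of shape $\lambda$ is the paper's application of \cite[Theorem 1]{Mc99}. (Note also that the realizability of every pair $(\Gamma,\mu)$ satisfying the descent condition by an actual $A\in\Pi_{n,d}$ — which requires producing $w\in\Gamma$ and $\nu\in\Lambda^{\jmath}(n,d)$ with $w$ maximal in $W_{\nu}wW_{\mu}$ — is taken for granted both by you and by the paper; since $\mathcal{L}(w)$ is not constant along $\Gamma$ and $\nu$ is constrained to lie in $\Lambda^{\jmath}(n,d)$, this itself needs an argument.)

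The genuine gap is your third step, and you name it yourself: the cell-by-cell identity
\begin{align*}
\#\{\text{semistandard domino tableaux of shape }\lambda\text{ and content }\mu\}
=\#\bigl\{\Gamma\subseteq\mathbf{c}_{\lambda}\mid W_{\mu}\subseteq\mathcal{R}(\Gamma)\bigr\}
\end{align*}
is not a reduction of the conjecture to something simpler — it \emph{is} the conjecture, refined by content. Everything that is hard lives here: (i) one must know that $\mathcal{R}(\Gamma)$, for a left cell of $\HH$ in type $B_d$, is computed by the $\tau$-invariant of the standard domino tableau in Garfinkle's parametrization, and Garfinkle's correspondence is not a naive insertion algorithm — it involves cycles and moving-through operations, so descent sets do not simply ``read off'' as in the Robinson--Schensted setting used elsewhere in the paper; (ii) the special reflection $s_{0}$ and the monomino (forced by the odd part $\mu_{0}$) must be matched correctly, which is exactly where the equal-parameter type $B$ combinatorics differs from type $A$; and (iii) the standardization map for semistandard domino tableaux (your tie-breaking rule) must be shown to be a well-defined bijection onto standard domino tableaux satisfying the corresponding descent condition — the domino analogue of the Kostka/standardization identity is not a known formality and horizontal versus vertical dominoes with equal entries make the naive rule problematic. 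Since none of this is carried out, your proposal is a plausible strategy sketch with the decisive step missing — precisely the step whose absence is the reason the paper states the result as Conjecture \ref{theor:d=d2=d000adadcnumber} rather than as a theorem.
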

\begin{rem}
The latter number in Conjecture \ref{theor:d=d2=d000adadcnumber} has appeared in \cite{BK00} and also \cite{St96}. We refer to \cite[\S3]{St96} for further explanation.
\end{rem}


\section{Characterization of two-sided cells in modified $\imath$quantum groups of type AIII}
In this section, we preserve the setup of Section \ref{c5}. Inspired by \cite{Du96} and \cite{McG03} we shall lift the combinatorial description of two-sided cells in $S^{\jmath}(n, d)$ to give a characterization of two-sided cells in the modified $\imath$quantum group $\dot{\mathbf{U}}^{\jmath}(\mathfrak{s}\mathfrak{l}_{n})$.

\subsection{Definition of $\dot{\mathbf{U}}^{\jmath}(\mathfrak{s}\mathfrak{l}_{n})$}
\label{c2}
In this subsection, we shall follow \cite[\S2]{BSWW18} and \cite[\S6.1]{BW18} to give some preliminaries on the modified $\imath$quantum group $\dot{\mathbf{U}}^{\jmath}(\mathfrak{s}\mathfrak{l}_{n})$.

For $a\in \mathbb{Z}$ and $b\in \mathbb{N}$, we define
$$[a]=\frac{q^{a}-q^{-a}}{q-q^{-1}},\quad [b]!=\prod_{h=1}^{b}\frac{q^{h}-q^{-h}}{q-q^{-1}}.$$
The $b$-th {\em divided power} of an element $E$ in a $\mathbb{Q}(q)$-algebra is the element $E^{(b)} :=E^{b}/[b]!$.

Recall that $n=2r+1\in \mathbb{Z}_{\geq 3}$. We set $\diamond=\frac{1}{2}$ and define
\begin{align*}
\mathbb{I}=\{i\in \mathbb{Z}+\diamond\:|\:-r< i< r\}.
\end{align*}
Consider the root datum of type $A_{n-1}$ with a Cartan matrix indexed by $\mathbb{I}$, weight lattice $X$, simple roots $\{\alpha_{i}\}_{i\in \mathbb{I}}\subset X$, simple coroots $\{\alpha_{i}^{\vee}\}_{i\in \mathbb{I}}$, and the coroot lattice $Y=\bigoplus_{i\in \mathbb{I}}\mathbb{Z}\alpha_{i}^{\vee}$. There is a perfect pairing
\begin{align}\label{c1}
\langle \cdot, \cdot\rangle :Y\times X\rightarrow \mathbb{Z}
\end{align}
such that the entries of the Cartan matrix are given by $\langle \alpha_{i}^{\vee}, \alpha_{j}\rangle$ for $i, j\in \mathbb{I}$.

Consider the lattice $\bigoplus_{a=-r}^{r}\mathbb{Z}\varepsilon_{a}$ with the standard bilinear pairing $(\varepsilon_{a}, \varepsilon_{a})=\delta_{a, b}$ for all $a, b$. We will identify
\begin{align*}
X=\bigoplus_{a=-r}^{r}\mathbb{Z}\varepsilon_{a}\Big/X_{\varepsilon},\quad\text{ where } X_{\varepsilon}=\mathbb{Z}\Big(\sum_{a=-r}^{r}\varepsilon_{a}\Big).
\end{align*}
Then $\alpha_{i}=\varepsilon_{i-\diamond}-\varepsilon_{i+\diamond}$ (mod $X_{\varepsilon}$) lies in $X$.

The quantum group $\mathbf{U}_{q}(\mathfrak{s}\mathfrak{l}_{n})$ of type $A_{n-1}$ is an associative algebra over $\mathbb{Q}(q)$ with generators
$E_{i}, F_{i}, K_{i}^{\pm 1}$ $(i\in \mathbb{I})$ and the following relations:
\begin{align*}&K_{i}K^{\pm1}_{j}=K^{\pm1}_{j}K_{i}, \quad K_{i}K_{i}^{-1}=1=K_{i}^{-1}K_{i},\quad K_{i}E_{j}K_{i}^{-1}=q^{\langle \alpha_{i}^{\vee}, \alpha_{j}\rangle}E_{j}\
~\quad\hbox{for~all}~ i, j\in \mathbb{I},\\
&E_{i}F_{j}-F_{j}E_{i}=\delta_{i,j}\frac{K_{i}-K_{i}^{-1}}{q-q^{-1}},\quad K_{i}F_{j}K_{i}^{-1}=q^{-\langle \alpha_{i}^{\vee}, \alpha_{j}\rangle}F_{j}
\ ~\quad\hbox{for~all}~ i, j\in \mathbb{I},\\
&\sum\limits_{s+t=1-\langle \alpha_{i}^{\vee}, \alpha_{j}\rangle}(-1)^{t}E_{i}^{(s)}E_{j}E_{i}^{(t)}=\sum\limits_{s+t=1-\langle \alpha_{i}^{\vee}, \alpha_{j}\rangle}(-1)^{t}F_{i}^{(s)}F_{j}F_{i}^{(t)}=0\  ~\quad\hbox{for~all }i\neq j.
\end{align*}

We set
\begin{align*}
\mathbb{I}^{\jmath}=\{\diamond, \diamond+1,\ldots,\diamond+r-1\}.
\end{align*}
By \cite[\S6.1]{BW18}, the $\imath$quantum group $\mathbf{U}^{\jmath}(\mathfrak{s}\mathfrak{l}_{n})$ is an associative algebra over $\mathbb{Q}(q)$ with generators
$\mathcal{E}_{i}, \mathcal{F}_{i}, \mathcal{K}_{i}^{\pm 1}$ $(i\in \mathbb{I}^{\jmath})$ and the following relations:
\begin{align*}
\mathcal{K}_{i}\mathcal{K}^{\pm1}_{j}&=\mathcal{K}^{\pm1}_{j}\mathcal{K}_{i}, \quad \mathcal{K}_{i}\mathcal{K}_{i}^{-1}=1=\mathcal{K}_{i}^{-1}\mathcal{K}_{i},\\
\mathcal{K}_{i}\mathcal{E}_{j}\mathcal{K}_{i}^{-1}&=q^{(\alpha_{i}-\alpha_{-i}, \alpha_{j})}\mathcal{E}_{j},\quad \mathcal{K}_{i}\mathcal{F}_{j}\mathcal{K}_{i}^{-1}=q^{-(\alpha_{i}-\alpha_{-i}, \alpha_{j})}\mathcal{F}_{j},\\
\mathcal{E}_{i}\mathcal{F}_{j}-\mathcal{F}_{j}\mathcal{E}_{i}&=0\quad \text{ for all $i\neq j$}, \quad \mathcal{E}_{i}\mathcal{F}_{i}-\mathcal{F}_{i}\mathcal{E}_{i}=\frac{\mathcal{K}_{i}-\mathcal{K}_{i}^{-1}}{q-q^{-1}}\quad \text{ for all }i\neq \diamond,\\
\mathcal{E}_{i}\mathcal{E}_{j}&=\mathcal{E}_{j}\mathcal{E}_{i},\quad \mathcal{F}_{i}\mathcal{F}_{j}=\mathcal{F}_{j}\mathcal{F}_{i}\quad \text{ for }|i-j|> 1,\\
\mathcal{E}_{i}^{2}\mathcal{E}_{j}+\mathcal{E}_{j}\mathcal{E}_{i}^{2}&=(q+q^{-1})\mathcal{E}_{i}\mathcal{E}_{j}\mathcal{E}_{i},\quad \mathcal{F}_{i}^{2}\mathcal{F}_{j}+\mathcal{F}_{j}\mathcal{F}_{i}^{2}=(q+q^{-1})\mathcal{F}_{i}\mathcal{F}_{j}\mathcal{F}_{i}\quad \text{ for }|i-j|= 1,\\
\mathcal{F}_{\diamond}^{2}\mathcal{E}_{\diamond}+\mathcal{E}_{\diamond}\mathcal{F}_{\diamond}^{2}&=(q+q^{-1})
(\mathcal{F}_{\diamond}\mathcal{E}_{\diamond}\mathcal{F}_{\diamond}-q\mathcal{F}_{\diamond}\mathcal{K}_{\diamond}^{-1}-q^{-1}\mathcal{F}_{\diamond}\mathcal{K}_{\diamond}),\\
\mathcal{E}_{\diamond}^{2}\mathcal{F}_{\diamond}+\mathcal{F}_{\diamond}\mathcal{E}_{\diamond}^{2}&=(q+q^{-1})
(\mathcal{E}_{\diamond}\mathcal{F}_{\diamond}\mathcal{E}_{\diamond}-q^{-1}\mathcal{K}_{\diamond}\mathcal{E}_{\diamond}-q\mathcal{K}_{\diamond}^{-1}\mathcal{E}_{\diamond}).
\end{align*}

By \cite[Proposition 6.2]{BW18}, There is an injective $\mathbb{Q}(q)$-algebra homomorphism $\jmath :\mathbf{U}^{\jmath}(\mathfrak{s}\mathfrak{l}_{n})\rightarrow \mathbf{U}_{q}(\mathfrak{s}\mathfrak{l}_{n})$, which is defined by
\begin{align*}
\mathcal{E}_{i}\mapsto E_{i}+K_{i}^{-1}F_{-i},\quad \mathcal{F}_{i}\mapsto F_{i}K_{-i}^{-1}+E_{-i},\quad \mathcal{K}_{i}\mapsto K_{i}K_{-i}^{-1}
\end{align*}
for all $i\in \mathbb{I}^{\jmath}$, such that $\mathbf{U}^{\jmath}(\mathfrak{s}\mathfrak{l}_{n})$ is a (right) {\em coideal} subalgebra of $\mathbf{U}_{q}(\mathfrak{s}\mathfrak{l}_{n})$ (cf. \cite[Proposition 6.3]{BW18}). Then $(\mathbf{U}_{q}(\mathfrak{s}\mathfrak{l}_{n}), \mathbf{U}^{\jmath}(\mathfrak{s}\mathfrak{l}_{n}))$ is an example of quantum symmetric pairs (cf. \cite{Le99}, \cite{Ko14}).

Let $\theta$ be the involution of the lattice $X$ defined by $\theta(\varepsilon_{a})=-\varepsilon_{-a}$ for $-r\leq a\leq r$. Denote by $X^{\theta}$ the sublattice of $\theta$-fixed points in $X$. Note that $\theta(\alpha_{i})=\alpha_{-i}$ for all $i\in \mathbb{I}$; hence $\theta$ induces an automorphism of the root system. Set ${}^{\theta}\!\alpha_{i}^{\vee}=\alpha_{i}^{\vee}-\alpha_{-i}^{\vee}$ for $i\in \mathbb{I}^{\jmath}$. Let
\begin{align*}
X_{\jmath}=X/X^{\theta},\qquad Y^{\jmath}=\bigoplus_{i\in \mathbb{I}^{\jmath}}\mathbb{Z}\hspace{0.6mm} {}^{\theta}\!\alpha_{i}^{\vee}.
\end{align*}
The lattice $X_{\jmath}$ can be regarded as a weight lattice for $\mathbf{U}^{\jmath}(\mathfrak{s}\mathfrak{l}_{n})$. The pairing \eqref{c1} induces a non-degenerate pairing
\begin{align*}
\langle \cdot, \cdot\rangle :Y^{\jmath}\times X_{\jmath}\rightarrow \mathbb{Z}.
\end{align*}
For $\lambda\in X_{\jmath}$, we write
\begin{align*}
\lambda_{i}=\langle {}^{\theta}\!\alpha_{i}^{\vee}, \lambda\rangle \quad \text{ for all }i\in \mathbb{I}^{\jmath}.
\end{align*}

Let $\dot{\mathbf{U}}^{\jmath}(\mathfrak{s}\mathfrak{l}_{n})$ be the $\mathbb{Q}(q)$-linear category with the object set $X_{\jmath}$ and morphisms generated by $\mathcal{E}_{i} :\lambda\mapsto \lambda+\alpha_{i}=\lambda-\alpha_{-i}$, $\mathcal{F}_{i} :\lambda\mapsto \lambda-\alpha_{i}=\lambda+\alpha_{-i}$, for all $i\in \mathbb{I}^{\jmath}$, subject to the following relations:
\begin{align*}
(\mathcal{E}_{i}\mathcal{F}_{j}-\mathcal{F}_{j}\mathcal{E}_{i})1_{\lambda}&=0\quad \text{ for all $i\neq j$},\\
(\mathcal{E}_{i}\mathcal{F}_{i}-\mathcal{F}_{i}\mathcal{E}_{i})1_{\lambda}&=[\lambda_{i}]1_{\lambda}\quad \text{ for all }i\neq \diamond,\\
\sum\limits_{s+t=1-\langle \alpha_{i}^{\vee}, \alpha_{j}\rangle}(-1)^{t}\mathcal{E}_{i}^{(s)}\mathcal{E}_{j}\mathcal{E}_{i}^{(t)}&=\sum\limits_{s+t=1-\langle \alpha_{i}^{\vee}, \alpha_{j}\rangle}(-1)^{t}\mathcal{F}_{i}^{(s)}\mathcal{F}_{j}\mathcal{F}_{i}^{(t)}=0\  ~\quad\hbox{for~all }i\neq j,\\
(\mathcal{E}_{\diamond}^{(2)}\mathcal{F}_{\diamond}-\mathcal{E}_{\diamond}\mathcal{F}_{\diamond}\mathcal{E}_{\diamond}+\mathcal{F}_{\diamond}\mathcal{E}_{\diamond}^{(2)})1_{\lambda}&=
-(q^{\lambda_{\diamond}+2}+q^{-\lambda_{\diamond}-2})\mathcal{E}_{\diamond}1_{\lambda},\\
(\mathcal{F}_{\diamond}^{(2)}\mathcal{E}_{\diamond}-\mathcal{F}_{\diamond}\mathcal{E}_{\diamond}\mathcal{F}_{\diamond}+\mathcal{E}_{\diamond}\mathcal{F}_{\diamond}^{(2)})1_{\lambda}&=
-(q^{\lambda_{\diamond}-1}+q^{-\lambda_{\diamond}+1})\mathcal{F}_{\diamond}1_{\lambda},
\end{align*}
where $1_{\lambda}$ denotes the identity endomorphism of $\lambda$.

As mentioned in \cite[p.651]{BSWW18}, the definition of $\dot{\mathbf{U}}^{\jmath}(\mathfrak{s}\mathfrak{l}_{n})$ here is essentially the same as the one in \cite[\S4.1]{LiW18}. Let ${}_{\A}\!\dot{\mathbf{U}}^{\jmath}$ be the $\A$-linear subcategory of $\dot{\mathbf{U}}^{\jmath}(\mathfrak{s}\mathfrak{l}_{n})$ with the same objects and with morphisms generated by divided powers $\mathcal{E}_{i}^{(a)}1_{\lambda}$, $\mathcal{F}_{i}^{(a)}1_{\lambda}$ for all $i\in \mathbb{I}^{\jmath}$, $\lambda\in X_{\jmath}$ and $a\in \mathbb{Z}_{\geq 0}$. It was shown in \cite[\S4.1]{LiW18} that ${}_{\A}\!\dot{\mathbf{U}}^{\jmath}$ is a free $\A$-module and $\mathbb{Q}(q)\otimes_{\A} {}_{\A}\!\dot{\mathbf{U}}^{\jmath}=\dot{\mathbf{U}}^{\jmath}(\mathfrak{s}\mathfrak{l}_{n})$.

\subsection{Description of two-sided cells in ${}_{\A}\!\dot{\mathbf{U}}^{\jmath}$}
\label{c3}

In \cite[Theorem 5.5]{LiW18}, an $\A$-basis $\dot{\mathbf{B}}^{\jmath}(\mathfrak{s}\mathfrak{l}_{n})$ of ${}_{\A}\!\dot{\mathbf{U}}^{\jmath}$, which is called the {\em $\jmath$-canonical basis}, has been constructed via studying the asymptotical behavior of the canonical bases of $\jmath$-Schur algebras $S^{\jmath}(n, d)$ with varying $d$ under the transfer map. Furthermore, in \cite[Theorem 5.6]{LiW18} it was proved that the structure constants with respect to the $\jmath$-canonical basis $\dot{\mathbf{B}}^{\jmath}(\mathfrak{s}\mathfrak{l}_{n})$ lie in $\mathbb{N}[q, q^{-1}]$.



In \cite[(4.12)]{LiW18}, a surjective algebra homomorphism $\phi_{d}^{\jmath} :{}_{\A}\!\dot{\mathbf{U}}^{\jmath}\rightarrow S^{\jmath}(n, d)$ has been constructed for each $d$. The following theorem was proved in \cite[Corollary 6.6]{BSWW18}, improving \cite[Proposition 5.11]{LiW18}.

\begin{thm} $($see \cite[Corollary 6.6]{BSWW18}$)$
\label{good properties of the based map}
For each $b\in \dot{\mathbf{B}}^{\jmath}(\mathfrak{s}\mathfrak{l}_{n})$, the image $\phi_{d}^{\jmath}(b) \in \{0\}\cup \mathbf{B}_{d}^{\jmath}$, where $\mathbf{B}_{d}^{\jmath}$ denotes the set of canonical basis elements for $S^{\jmath}(n, d)$. Moreover, the kernel of $\phi_{d}^{\jmath}$ is spanned by the elements $b\in \dot{\mathbf{B}}^{\jmath}(\mathfrak{s}\mathfrak{l}_{n})$ such that $\phi_{d}^{\jmath}(b)=0$.
\end{thm}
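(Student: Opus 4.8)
The plan is to realize $\phi_{d}^{\jmath}$ as a homomorphism of \emph{based} $\A$-algebras and then invoke the defining characterization of the canonical basis $\{A\}$ of $S^{\jmath}(n, d)$ recalled in Section \ref{c5} (bar-invariance together with unitriangularity against the standard basis $[A]$, with off-diagonal coefficients in $q^{-1}\mathbb{N}[q^{-1}]$). Concretely, I would show that $\phi_{d}^{\jmath}$ intertwines the two bar involutions and that it carries the integral standard structure of ${}_{\A}\!\dot{\mathbf{U}}^{\jmath}$ to that of $S^{\jmath}(n, d)$ in a triangular fashion. Once both are in place, the image of a bar-invariant basis vector is forced to be bar-invariant and ``almost standard,'' hence either $0$ or a single $\{A\}$ by uniqueness. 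The kernel statement then follows from distinctness of the nonzero images.

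First I would establish bar-compatibility, $\phi_{d}^{\jmath}\circ\bar{\cdot}=\bar{\cdot}\circ\phi_{d}^{\jmath}$, where the bar on $S^{\jmath}(n, d)$ is the one fixed in \eqref{d2} and the bar on ${}_{\A}\!\dot{\mathbf{U}}^{\jmath}$ is the one used in \cite{LiW18} to pin down $\dot{\mathbf{B}}^{\jmath}(\mathfrak{s}\mathfrak{l}_{n})$. Since $\phi_{d}^{\jmath}$ is realized through the action of the divided-power generators on the permutation modules $x_{\lambda}\HH$, and the bar involution on $\HH$ sends $T_{w}\mapsto T_{w^{-1}}^{-1}$, this reduces to a check on the generators $\mathcal{E}_{i}^{(a)}1_{\lambda}$ and $\mathcal{F}_{i}^{(a)}1_{\lambda}$, which are bar-invariant on the source and whose images are bar-invariant on the target. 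Consequently $\phi_{d}^{\jmath}(b)$ is bar-invariant for every $b\in\dot{\mathbf{B}}^{\jmath}(\mathfrak{s}\mathfrak{l}_{n})$.

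Second, and this is the crux, I would control $\phi_{d}^{\jmath}$ on standard bases. The basis $\dot{\mathbf{B}}^{\jmath}(\mathfrak{s}\mathfrak{l}_{n})$ is built in \cite{LiW18} by a Beilinson--Lusztig--MacPherson type stabilization of the canonical bases $\{A\}$ of the algebras $S^{\jmath}(n, d')$ as $d'$ varies (compare \cite{BKLW18}), so its elements are indexed by matrices $B$ whose relevant diagonal entries may lie in $\Z$, and $\phi_{d}^{\jmath}(b_{B})$ should equal the specialization $[{}^{d}B]$ of the standard basis plus strictly lower terms for the order \eqref{d3} (or $0$ when $B$ admits no nonnegative specialization in $\Pi_{n, d}$ at level $d$). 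The main obstacle is exactly to prove this triangularity and to show the transition coefficients lie in $q^{-1}\mathbb{N}[q^{-1}]$: this requires tracking how the polynomials $P_{A', A}$ and the transfer maps behave under the stabilization, and is where the positivity input from the intersection-cohomology construction enters. Granting it, $\phi_{d}^{\jmath}(b_{B})$ is a bar-invariant vector of the form $[{}^{d}B]+\sum_{A'<{}^{d}B}c_{A'}[A']$ with $c_{A'}\in q^{-1}\Z[q^{-1}]$, so by the uniqueness of $\{{}^{d}B\}$ it equals $\{{}^{d}B\}\in\mathbf{B}_{d}^{\jmath}$; otherwise $\phi_{d}^{\jmath}(b_{B})=0$.

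Finally, for the kernel assertion I would note that distinct basis vectors $b_{B},b_{B'}$ with nonzero images have distinct leading standard terms $[{}^{d}B]$, $[{}^{d}B']$, since the specialization on indexing matrices is injective where defined; hence $\phi_{d}^{\jmath}$ restricts to a bijection from $\{b\in\dot{\mathbf{B}}^{\jmath}(\mathfrak{s}\mathfrak{l}_{n})\mid\phi_{d}^{\jmath}(b)\neq 0\}$ onto $\mathbf{B}_{d}^{\jmath}$, and the kernel is precisely the $\A$-span of the remaining basis vectors. I expect the genuine difficulty to be concentrated in the second step---the triangularity and positivity of the specialized standard and canonical bases under stabilization---while bar-compatibility and the kernel bookkeeping are comparatively formal.
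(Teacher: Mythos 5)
The first thing to note is that the paper does not prove Theorem \ref{good properties of the based map} at all: the statement is imported verbatim from \cite[Corollary 6.6]{BSWW18}, which improves the weaker \cite[Proposition 5.11]{LiW18}. So your proposal is not an alternative to an argument appearing in this paper; it is an attempt to reprove that external theorem from scratch, and it has to be judged on those terms.

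Judged so, it has a genuine gap, and the gap sits exactly where you place the ``crux''. Your second step --- that $\phi_{d}^{\jmath}(b_{B})$ equals $[{}^{d}B]$ plus strictly lower terms for \eqref{d3} with coefficients in $q^{-1}\Z[q^{-1}]$, or else $0$ --- is not a lemma on the way to the theorem; combined with bar-equivariance it \emph{is} the theorem, by the uniqueness characterization of the canonical basis of $S^{\jmath}(n,d)$. The route you sketch for it (tracking the polynomials $P_{A',A}$ and the transfer maps through the stabilization) is precisely what is known not to suffice. The stabilization defining $\dot{\mathbf{B}}^{\jmath}(\mathfrak{s}\mathfrak{l}_{n})$ identifies $b_{B}$ with canonical basis elements of $S^{\jmath}(n,d')$ only for $d'$ sufficiently large; to descend to a fixed $d$ one must pass through the transfer maps $\phi_{d+n,d}^{\jmath}$, and the assertion that these send canonical basis elements to canonical basis elements or zero amounts to the statement being proved (given the compatibility $\phi_{d+n,d}^{\jmath}\circ\phi_{d+n}^{\jmath}=\phi_{d}^{\jmath}$ and surjectivity). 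This is exactly why \cite{LiW18} obtained only a partial result and why the full statement required the categorification machinery of \cite{BSWW18}: there $\phi_{d}^{\jmath}$ is realized by a functor between 2-representations, and ``canonical basis element $\mapsto$ canonical basis element or zero'' is deduced from ``indecomposable $\mapsto$ indecomposable or zero,'' a statement for which no elementary triangularity argument is currently available. Note also that the positivity you invoke from the intersection-cohomology construction holds within each $S^{\jmath}(n,d)$ and within the limit algebra separately; it does not by itself relate the bases across different $d$ through the transfer maps. Your first step (bar-equivariance of $\phi_{d}^{\jmath}$) and your final kernel bookkeeping (distinct nonzero images are linearly independent, so any kernel element is supported on basis vectors that are killed) are correct, but they are the formal part; as written, the proposal assumes the hard part rather than proving it.
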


In \cite[(4.8)]{LiW18}, the transfer map $\phi_{d+n, d}^{\jmath} :S^{\jmath}(n, d+n)\rightarrow S^{\jmath}(n, d)$ has been defined. Moreover, in \cite[Proposition 4.4]{LiW18} it was proved that the maps $\phi_{d}^{\jmath}$ and $\phi_{d+n, d}^{\jmath}$ are compatible, that is, $\phi_{d+n, d}^{\jmath}\circ\phi_{d+n}^{\jmath}=\phi_{d}^{\jmath}$.

Set $\hat{S}^{\jmath} :=\lim\limits_{\longleftarrow}S^{\jmath}(n, d)$, where the limit is taken over the projective system given by the transfer maps $(\phi_{d+n, d}^{\jmath})_{d\in \mathbb{N}}$ mentioned above. Since the maps $\phi_{d}^{\jmath}$ are compatible with this system, there is a unique map $\phi :{}_{\A}\!\dot{\mathbf{U}}^{j}\rightarrow \hat{S}^{\jmath}$, which factors each of the maps $\phi_{d}^{\jmath}$ through the canonical map $\hat{S}^{\jmath}\rightarrow S^{\jmath}(n, d)$.

Recall that a bilinear form $\langle\cdot, \cdot\rangle_{d}$ on $S^{\jmath}(n, d)$ was defined in \cite[\S3.7]{BKLW18} (and denoted by $(\cdot, \cdot)_{D}$ therein with $D=2d+1$). Using this, in \cite[(5.1)]{LiW18}, a bilinear form $\langle\cdot, \cdot\rangle_{\jmath}$ on ${}_{\A}\!\dot{\mathbf{U}}^{\jmath}$ has been constructed as follows:
\begin{align}\label{c4}
\langle x, y\rangle_{\jmath} :=\lim_{p\rightarrow \infty} \sum_{k=0}^{n-1}\big\langle \phi_{k+pn}^{\jmath}(x), \phi_{k+pn}^{\jmath}(y)\big\rangle_{k+pn}\quad \text{ for }x, y\in {}_{\A}\!\dot{\mathbf{U}}^{\jmath}.
\end{align}
Moreover, it was shown in \cite[Proposition 5.7]{LiW18} that the bilinear form $\langle\cdot, \cdot\rangle_{\jmath}$ is non-degenerate. Using this and an argument similar to \cite[Proposition 7.1]{McG12}, we can prove the following result. (We also refer to \cite[Proposition 6.5.1]{FLLLW20} for a similar conclusion in the case of affine $\mathfrak{s}\mathfrak{l}$ type.)
\begin{prop}
\label{injective-mapphi}
The homomorphism $\phi$ is injective.
\end{prop}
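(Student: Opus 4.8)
The plan is to exploit the non-degeneracy of the bilinear form $\langle\cdot, \cdot\rangle_{\jmath}$ on ${}_{\A}\!\dot{\mathbf{U}}^{\jmath}$ together with the compatibility between $\phi$ and the family of maps $\phi_d^{\jmath}$. The point is that any element killed by $\phi$ lies in the kernel of every $\phi_d^{\jmath}$, while the defining formula \eqref{c4} for $\langle\cdot, \cdot\rangle_{\jmath}$ is assembled entirely out of the pairings $\langle\cdot, \cdot\rangle_d$ evaluated on such images; hence such an element pairs trivially with everything and must therefore vanish.

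First I would identify the kernel of $\phi$. Let $\pi_d \colon \hat{S}^{\jmath}\to S^{\jmath}(n, d)$ denote the canonical projection out of the inverse limit. By the construction of $\phi$ one has $\pi_d\circ\phi=\phi_d^{\jmath}$ for every $d$, and an element of $\hat{S}^{\jmath}$ vanishes precisely when all of its components do. Consequently $x\in{}_{\A}\!\dot{\mathbf{U}}^{\jmath}$ lies in $\ker\phi$ exactly when $\phi_d^{\jmath}(x)=0$ for every $d\in\mathbb{N}$, so that $\ker\phi=\bigcap_{d}\ker\phi_d^{\jmath}$. It thus suffices to show that this intersection is trivial.

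Next I would feed such an $x$ into the form. Assume $x\in\ker\phi$, so that $\phi_{k+pn}^{\jmath}(x)=0$ for all $0\leq k\leq n-1$ and all $p$. Since each $\langle\cdot, \cdot\rangle_{k+pn}$ is bilinear, every summand $\langle \phi_{k+pn}^{\jmath}(x), \phi_{k+pn}^{\jmath}(y)\rangle_{k+pn}$ in \eqref{c4} vanishes, whence $\langle x, y\rangle_{\jmath}=0$ for all $y\in{}_{\A}\!\dot{\mathbf{U}}^{\jmath}$. By the non-degeneracy of $\langle\cdot, \cdot\rangle_{\jmath}$ established in \cite[Proposition 5.7]{LiW18}, this forces $x=0$. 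Hence $\ker\phi=0$ and $\phi$ is injective.

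The argument is short because the real content has already been isolated, namely the existence and non-degeneracy of $\langle\cdot, \cdot\rangle_{\jmath}$. Accordingly the only points I expect to require care, and where the genuine work would lie were one to unwind the cited results, are: first, that the limit in \eqref{c4} actually defines a form, so that the termwise vanishing propagates to the limit and ``$\langle x, y\rangle_{\jmath}=0$'' is meaningful; and second, that ``non-degenerate'' is invoked in the correct one-sided form, namely that a vector pairing to zero against the whole space is itself zero. Both are supplied by \cite[Proposition 5.7]{LiW18}, so no further estimates are needed; this mirrors the structure of the argument in \cite[Proposition 7.1]{McG12}.
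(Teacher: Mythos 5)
Your proposal is correct and follows essentially the same argument as the paper: an element of $\ker\phi$ is killed by every $\phi_d^{\jmath}$, hence by the defining formula \eqref{c4} it lies in the radical of $\langle\cdot,\cdot\rangle_{\jmath}$, and the non-degeneracy from \cite[Proposition 5.7]{LiW18} forces it to vanish. The extra details you supply (identifying $\ker\phi=\bigcap_d\ker\phi_d^{\jmath}$ via the projections out of the inverse limit) are a faithful unwinding of the paper's one-line deduction, not a different route.
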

\begin{proof}
Suppose that $u$ is in the kernel of $\phi$. Then for each $d$ we have $\phi_{d}^{\jmath}(u)=0$, and hence by \eqref{c4} $u$ is in the radical of the bilinear form $\langle\cdot, \cdot\rangle_{\jmath}$. Since it is non-degenerate, we must have $u=0$.
\end{proof}

The following theorem gives a characterization of two-sided cells in ${}_{\A}\!\dot{\mathbf{U}}^{\jmath}$ in terms of the ones in $\jmath$-Schur algebras $S^{\jmath}(n, d)$.
\begin{thm}\label{characte-two-sided-cells-j}
For any two elements $b, b'\in \dot{\mathbf{B}}^{\jmath}(\mathfrak{s}\mathfrak{l}_{n})$, we have $b\sim_{LR}b'$ if and only if there exists some $d\in \mathbb{N}$ such that $\phi_{d}^{\jmath}(b)\neq 0$, $\phi_{d}^{\jmath}(b')\neq 0$, and moreover, $\phi_{d}^{\jmath}(b)\sim_{LR}\phi_{d}^{\jmath}(b')$.
\end{thm}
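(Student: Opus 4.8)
The plan is to transport the two preorders across the surjections $\phi_{d}^{\jmath}:{}_{\A}\!\dot{\mathbf{U}}^{\jmath}\rightarrow S^{\jmath}(n,d)$, exploiting that all structure constants lie in $\mathbb{N}[q,q^{-1}]$ on both sides (so Lemma \ref{another-charact-cells} applies to each algebra) together with the injectivity of the inverse-limit map $\phi$ from Proposition \ref{injective-mapphi}. First I would extract from Theorem \ref{good properties of the based map} the following bookkeeping fact: writing $\mathcal{B}_1=\{c\in\dot{\mathbf{B}}^{\jmath}(\mathfrak{s}\mathfrak{l}_{n})\mid\phi_{d}^{\jmath}(c)\neq 0\}$, the restriction $\phi_{d}^{\jmath}|_{\mathcal{B}_1}$ is a bijection onto $\mathbf{B}_{d}^{\jmath}$. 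Indeed, the kernel is spanned by the complementary basis vectors, so $\phi_{d}^{\jmath}$ separates the elements of $\mathcal{B}_1$, and surjectivity forces their images to exhaust $\mathbf{B}_{d}^{\jmath}$. Consequently, for any expansion $u=\sum_{c}g_{c}c$ in $\dot{\mathbf{B}}^{\jmath}(\mathfrak{s}\mathfrak{l}_{n})$ and any $b\in\mathcal{B}_1$, the coefficient of the canonical basis element $\phi_{d}^{\jmath}(b)$ in $\phi_{d}^{\jmath}(u)$ is exactly $g_{b}$.

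The core of the argument is an ``ascent'' step. Suppose $\phi_{d}^{\jmath}(b)\neq 0$ and $b\preceq_{LR}b'$. By Lemma \ref{another-charact-cells} there are $\beta,\beta'\in\dot{\mathbf{B}}^{\jmath}(\mathfrak{s}\mathfrak{l}_{n})$ with $g_{b}\neq 0$ in $\beta b'\beta'=\sum_{c}g_{c}c$. Applying the homomorphism $\phi_{d}^{\jmath}$ and using the previous paragraph, the $\phi_{d}^{\jmath}(b)$-coefficient of $\phi_{d}^{\jmath}(\beta)\phi_{d}^{\jmath}(b')\phi_{d}^{\jmath}(\beta')=\sum_{c}g_{c}\phi_{d}^{\jmath}(c)$ equals $g_{b}\neq 0$, so this element is nonzero; hence all of $\phi_{d}^{\jmath}(\beta),\phi_{d}^{\jmath}(b'),\phi_{d}^{\jmath}(\beta')$ are nonzero. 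Since the nonzero images are canonical basis elements of $S^{\jmath}(n,d)$ and the $\phi_{d}^{\jmath}(b)$-coefficient of their product is nonzero, Lemma \ref{another-charact-cells} applied inside $S^{\jmath}(n,d)$ gives $\phi_{d}^{\jmath}(b)\preceq_{LR}\phi_{d}^{\jmath}(b')$. Thus a single nonvanishing $\phi_{d}^{\jmath}(b)$ automatically propagates both the nonvanishing of $\phi_{d}^{\jmath}(b')$ and the preorder downstairs.

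For the forward implication I would first invoke Proposition \ref{injective-mapphi}: since $b\neq 0$, we have $\phi(b)\neq 0$, so $\phi_{d}^{\jmath}(b)\neq 0$ for some $d$. Fixing such a $d$, I feed $b\preceq_{LR}b'$ into the ascent step to obtain $\phi_{d}^{\jmath}(b')\neq 0$ and $\phi_{d}^{\jmath}(b)\preceq_{LR}\phi_{d}^{\jmath}(b')$; then, $\phi_{d}^{\jmath}(b')$ being nonzero, I feed $b'\preceq_{LR}b$ into the ascent step to obtain the reverse relation. Hence $\phi_{d}^{\jmath}(b)\sim_{LR}\phi_{d}^{\jmath}(b')$ with both images nonzero, all at this one value of $d$.

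For the converse, suppose $\phi_{d}^{\jmath}(b)\neq 0$, $\phi_{d}^{\jmath}(b')\neq 0$ and $\phi_{d}^{\jmath}(b)\sim_{LR}\phi_{d}^{\jmath}(b')$. From $\phi_{d}^{\jmath}(b)\preceq_{LR}\phi_{d}^{\jmath}(b')$ and Lemma \ref{another-charact-cells} in $S^{\jmath}(n,d)$, I get canonical basis elements witnessing the relation, which I lift through the bijection of the first paragraph to $\beta,\beta'\in\dot{\mathbf{B}}^{\jmath}(\mathfrak{s}\mathfrak{l}_{n})$; expanding $\beta b'\beta'=\sum_{c}g_{c}c$ upstairs and matching the $\phi_{d}^{\jmath}(b)$-coefficient forces $g_{b}\neq 0$, whence $b\preceq_{LR}b'$ by Lemma \ref{another-charact-cells}. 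Symmetry yields $b\sim_{LR}b'$. The hard part is the ascent step: one must guarantee that a single $d$ simultaneously controls the vanishing of $\phi_{d}^{\jmath}(b')$ and the survival of the cell relation, and this succeeds only because the non-negativity of the structure constants, through Lemma \ref{another-charact-cells}, lets a single nonzero coefficient $g_{b}$ certify the preorder without any cancellation.
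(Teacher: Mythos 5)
Your proposal is correct and follows essentially the same route as the paper: both directions rest on Lemma \ref{another-charact-cells} (positivity of structure constants on each side), Theorem \ref{good properties of the based map} to transport canonical-basis witnesses across $\phi_{d}^{\jmath}$ without cancellation, and Proposition \ref{injective-mapphi} to produce a single $d$ with a nonzero image. Your explicit bijection $\mathcal{B}_1\to\mathbf{B}_d^{\jmath}$ and the packaged ``ascent step'' are just a cleaner organization of what the paper does implicitly (the paper lifts witnesses in the converse via surjectivity to general elements $h,h'$ rather than via the bijection, which is immaterial).
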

\begin{proof}
Assume that $b\sim_{LR}b'$. By Proposition \ref{injective-mapphi}, we see that there exists some $d\in \mathbb{N}$ such that $\phi_{d}^{\jmath}(b')\neq 0$. By Lemma \ref{another-charact-cells}, there exist $h, h'\in \dot{\mathbf{B}}^{\jmath}(\mathfrak{s}\mathfrak{l}_{n})$ such that $b'$ appears with nonzero coefficient in the expansion of $hbh'$ as a linear combination of $\jmath$-canonical basis elements. By Theorem \ref{good properties of the based map}, we see that the canonical basis element $\phi_{d}^{\jmath}(b')$ appears with nonzero coefficient when expanding $\phi_{d}^{\jmath}(h)\phi_{d}^{\jmath}(b)\phi_{d}^{\jmath}(h')$ as a linear combination of elements in $\mathbf{B}_{d}^{\jmath}$. Hence, $\phi_{d}^{\jmath}(b)\neq 0$ and $\phi_{d}^{\jmath}(b')\preceq_{LR} \phi_{d}^{\jmath}(b)$. Similarly, we can show that $\phi_{d}^{\jmath}(b)\preceq_{LR} \phi_{d}^{\jmath}(b')$. Thus, we have $\phi_{d}^{\jmath}(b)\sim_{LR}\phi_{d}^{\jmath}(b')$.

Conversely, assume that there exists some $d\in \mathbb{N}$ such that $\phi_{d}^{\jmath}(b)\neq 0$, $\phi_{d}^{\jmath}(b')\neq 0$ and $\phi_{d}^{\jmath}(b)\sim_{LR}\phi_{d}^{\jmath}(b')$. By Lemma \ref{another-charact-cells} and the surjectivity of $\phi_{d}^{\jmath}$, we see that there exist $h, h'\in {}_{\A}\!\dot{\mathbf{U}}^{\jmath}$ such that $\phi_{d}^{\jmath}(b')$ appears with nonzero coefficient in the expansion of $\phi_{d}^{\jmath}(h)\phi_{d}^{\jmath}(b)\phi_{d}^{\jmath}(h')$ as a linear combination of elements in $\mathbf{B}_{d}^{\jmath}$. By Theorem \ref{good properties of the based map}, we see that $b'$ appears with nonzero coefficient in the expansion of $hbh'$ as a linear combination of $\jmath$-canonical basis elements. Hence, $b'\preceq_{LR} b$. Similarly, we can show that $b\preceq_{LR}b'$. Thus, we have $b\sim_{LR}b'$.
\end{proof}


\begin{rem}\label{affine-type-based module maps}
In \cite{FLLLW20, FLLLW22}, the authors have constructed several types of affine $q$-Schur algebras as well as their canonical bases, and shown that there exist maps from the modified $\imath$quantum groups of affine $\mathfrak{s}\mathfrak{l}$ type to them. If we can prove that some of these maps satisfy the properties similar to the ones that $\phi_{d}^{\jmath}$ satisfies in Theorem \ref{good properties of the based map}, and that an argument similar to Proposition \ref{injective-mapphi} holds (cf. \cite[Proposition 6.5.1]{FLLLW20}), then we can give an assertion similar to Theorem \ref{characte-two-sided-cells-j}.

In \cite[\S5.3]{CLW20}, we have taken the first step towards solving this problem, that is, we have provided a classification of two-sided cells in the affine $q$-Schur algebras of arbitrary type in terms of the ones in the corresponding affine Hecke algebras.
\end{rem}

\section{Description of two-sided cells in $S^{\imath}(n, d)$}
In this section, we focus on the $\imath$-Schur algebra $S^{\imath}(n, d)$ associated to $\HH$ and formulate a combinatorial description of two-sided cells in it. Moreover, we state a conjecture on the number of left cells in a two-sided cell of it.

\subsection{Preliminaries}
\label{e2}
Fix $d\in \Z_{\geq 1}$. Let $W_{C_d}$ be the Weyl group of type $C_d$ with generators $S=\{s_{0}, s_{1}, \ldots, s_{d-1}\}$ and with the same relations as in $W_{B_d}$. There is a canonical isomorphism between $W_{C_d}$ and $W_{B_d}$, and we shall denote the Iwahori--Hecke algebra associated to $W_{C_d}$ also by $\HH$. Then $\HH$ is an algebra over $\A$ with a basis $\{T_{w}\:|\:w\in W_{C_d}\}$ satisfying the same relations as \eqref{Btype-Hecke-alg-rela}.



Fix $n=2r\in \mathbb{Z}_{\geq 2}$. Following \cite[\S6.1]{LL21} we set
$$\Lambda^{\imath}(n, d)=\{\lambda=(\lambda_{i})_{i\in [-r, r]\setminus \{0\}}\in \mathbb{N}^{n}\:|\:\lambda_{-i}=\lambda_{i}\mathrm{~for~}1\leq i\leq r, ~\sum_{i}\lambda_{i}=2d\}.$$
For each $\lambda\in \Lambda^{\imath}(n, d)$, we denote by $W_{\lambda}$ the parabolic subgroup of $W_{C_d}$ generated by the following simple reflections
$$S\setminus\{s_{0}, s_{\lambda_1},\ldots,s_{\lambda_1+\cdots+\lambda_{r-1}}\},$$
and set $x_{\lambda} :=\sum_{w\in W_{\lambda}}q^{\ell'(w)}T_{w}$, where $\ell'$ is the length function on $W_{C_d}$ (cf. \cite[\S1.1]{Lu03}).

Following \cite[\S5.1]{BKLW18} and also \cite[(6.1.2)]{LL21} we define the {\em $\imath$-Schur algebra} of type $C_d$ over $\A$ by
$$S^{\imath}(n, d)=\End_{\HH}(\bigoplus_{\lambda\in \Lambda^{\imath}(n, d)}x_{\lambda}\HH).$$

For $\lambda, \mu\in \Lambda^{\imath}(n, d)$, we denote by $\D_{\lambda\mu}$ (resp. $\D_{\lambda\mu}^{+}$) the set of minimal (resp. maximal) length double coset representatives in $W_\lambda\backslash W_{C_d}/W_\mu$. Set
\begin{equation*}   \label{eq:Xi}
\Xi^{\imath} :=\{(\lambda,g,\mu)~|~\lambda,\mu\in\Lambda^{\imath}(n, d), g\in\D_{\lambda\mu}\}
\end{equation*}
and
\begin{equation*}   \label{eq:Xi}
\widetilde{\Xi}^{\imath} :=\{(\lambda,g,\mu)~|~\lambda,\mu\in\Lambda^{\imath}(n, d), g\in\D_{\lambda\mu}^{+}\}.
\end{equation*}
Then there exists a bijection between $\Xi^{\imath}$ and $\widetilde{\Xi}^{\imath}$ (cf. \cite[Proposition 9.15]{Lu03}) and we shall identify them.

Let $D=2d$. Following \cite[(6.1.1)]{LL21} we set
$$\Theta_{n,d}^{\imath} :=\{(a_{ij})_{i, j\in [-r, r]\setminus \{0\}}\in \mathrm{Mat}_{n\times n}(\mathbb{N})\:|\:\sum_{i, j}a_{ij}=D\}$$
and
$$\Pi_{n, d}^{\imath} :=\{(a_{ij})\in \Theta_{n,d}^{\imath}\:|\:a_{ij}=a_{-i,-j}\mathrm{~for~all~}i,j\}.$$
By \cite[Lemma 6.1.1]{LL21}, there is a natural bijection between $\Xi^{\imath}$ or $\widetilde{\Xi}^{\imath}$ and $\Pi_{n, d}^{\imath}$, and we shall identify them. If $A\in \Pi_{n, d}^{\imath}$, we denote by $(ro(A), w_{A}, co(A))$ and $(ro(A), w_{A}^{+}, co(A))$ the corresponding element in $\Xi^{\imath}$ and $\widetilde{\Xi}^{\imath}$, respectively.

For $\lambda, \mu\in \Lambda^{\imath}(n, d)$ and $w\in \D_{\lambda\mu}$, we define $\phi_{\lambda, \mu}^{w}\in S^{\imath}(n, d)$ by
$$\phi_{\lambda, \mu}^{w}(x_{\nu}h)=\delta_{\mu, \nu}\sum_{z\in W_\lambda wW_\mu}q^{\ell'(z)}T_{z}h\quad \mathrm{for~}h\in \HH.$$
Then the set $\{\phi_{\lambda, \mu}^{w}\:|\:\lambda, \mu\in \Lambda^{\imath}(n, d)\mathrm{~and~}w\in \D_{\lambda\mu}\}$ forms an $\A$-basis of $S^{\imath}(n, d)$ (see \cite[\S6.1]{LL21}).

Using the above basis, similar to \eqref{g1}, we can define a standard basis of $S^{\imath}(n, d)$, denoted by $\{[A]\:|\:A\in \Pi_{n, d}^{\imath}\}$ (see loc. cit.). Moreover, we can define a bar involution $\bar{\cdot}$ on $S^{\imath}(n, d)$ and a partial order $<$ on $\Pi_{n, d}^{\imath}$, respectively (cf. \eqref{d2}-\eqref{d3}). Using these, in \cite[Theorem 5.9]{BKLW18}, a {\em canonical basis} for $S^{\imath}(n, d)$ has been constructed, which is denoted by $\{\{A\}\:|\:A\in \Pi_{n, d}^{\imath}\}$.



The following proposition provides a classification of left, right and two-sided cells for $S^{\imath}(n, d)$ with respect to the canonical basis $\{\{A\}\:|\:A\in \Pi_{n, d}^{\imath}\}$, which can be proved in a manner similar to Proposition \ref{lem:d=d2=d000ad}.
\begin{prop}
\label{lem:d=d2=d000ad-i-i}
For $A, B\in \Pi_{n, d}^{i}$, we have the following results.
\begin{enumerate}
\item
$\{A\}\preceq_{L}\{B\}$ if and only if $co(A)=co(B)$ and $w_{A}^{+}\preceq_{L} w_{B}^{+}$. Similarly, $\{A\}\preceq_{R}\{B\}$ if and only if $ro(A)=ro(B)$ and $w_{A}^{+}\preceq_{R} w_{B}^{+}$.
\item
$\{A\}\sim_{L}\{B\}$ if and only if $co(A)=co(B)$ and $w_{A}^{+}\sim_{L} w_{B}^{+}$. Similarly, $\{A\}\sim_{R}\{B\}$ if and only if $ro(A)=ro(B)$ and $w_{A}^{+}\sim_{R} w_{B}^{+}$.
\item
$\{A\}\sim_{LR}\{B\}$ if and only if $w_{A}^{+}\sim_{LR} w_{B}^{+}$.
\end{enumerate}
\end{prop}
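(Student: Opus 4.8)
The plan is to follow the argument for Proposition~\ref{lem:d=d2=d000ad}, which itself adapts Du's treatment of the type $A$ $q$-Schur algebra in \cite[Lemma 2.2, Corollary 2.3]{Du96}. The first prerequisite is positivity of the structure constants $g_{A,B}^{C}$ of $S^{\imath}(n,d)$ with respect to the canonical basis $\{\{A\}\:|\:A\in \Pi_{n,d}^{\imath}\}$. This comes from the same intersection cohomology realization underlying the construction of this basis in \cite[Theorem 5.9]{BKLW18}, exactly as positivity is obtained in the $\jmath$ case; thus $g_{A,B}^{C}\in \mathbb{N}[q,q^{-1}]$. With positivity in hand, Lemma~\ref{another-charact-cells} reformulates each preorder: $\{A\}\preceq_{L}\{B\}$ holds precisely when $\{A\}$ occurs with nonzero coefficient in some product $\{C\}\{B\}$, and symmetrically for $\preceq_{R}$ and $\preceq_{LR}$.

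First I would extract the elementary half of each biconditional, namely the row and column constraints. Recall that $\{A\}$ lies in $\Hom_{\HH}(x_{co(A)}\HH, x_{ro(A)}\HH)$ and that multiplication in $S^{\imath}(n,d)$ is composition of $\HH$-homomorphisms. Hence in any product $\{C\}\{B\}$ the source object $x_{co(B)}\HH$ is preserved, so every $\{A\}$ in its support satisfies $co(A)=co(B)$; dually, every $\{A\}$ in the support of $\{B\}\{C\}$ satisfies $ro(A)=ro(B)$. This gives the necessity of $co(A)=co(B)$ for $\preceq_{L}$ and of $ro(A)=ro(B)$ for $\preceq_{R}$ in part (1), and likewise in part (2).

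The substance of the proof is the comparison with $\HH=\HH_{C_d}$. Fixing the column type $\mu$, I would show that the left preorder on those $\{A\}$ with $co(A)=\mu$ matches the Kazhdan--Lusztig left preorder on the maximal double coset representatives $w_{A}^{+}$. The tool is the dictionary, set up as in \cite[Lemma 2.2]{Du96} and used for Proposition~\ref{lem:d=d2=d000ad}, between left multiplication by a canonical basis element of $S^{\imath}(n,d)$ and left multiplication by the corresponding Kazhdan--Lusztig basis element $\mathcal{C}_{w}$ in $\HH$, compatibly with viewing $x_{\mu}\HH$ as a block of the faithful $\HH$-module $\bigoplus_{\lambda\in \Lambda^{\imath}(n,d)}x_{\lambda}\HH$. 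Positivity ensures no cancellation of supports, so that $\{A\}$ occurring in $\{C\}\{B\}$ forces $w_{A}^{+}\preceq_{L}w_{B}^{+}$, while conversely a Hecke-algebra product witnessing $w_{A}^{+}\preceq_{L}w_{B}^{+}$ can be reproduced inside $S^{\imath}(n,d)$ through its action on $x_{\mu}\HH$, yielding $\{A\}\preceq_{L}\{B\}$. This proves (1) for $\preceq_{L}$; the $\preceq_{R}$ case is symmetric, and (2) follows by applying (1) in both directions. For (3) one tracks a defining $\sim_{LR}$ chain: each $L$- or $R$-step projects to a $\preceq_{L}$- or $\preceq_{R}$-step on the $w^{+}$'s, giving $w_{A}^{+}\sim_{LR}w_{B}^{+}$, and the converse follows by lifting a chain of $\sim_{L}$ and $\sim_{R}$ relations connecting $w_{A}^{+}$ to $w_{B}^{+}$ to matrices in $\Pi_{n,d}^{\imath}$, with no surviving row/column constraint since $\sim_{LR}$ permits both kinds of step.

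The main obstacle I anticipate is verifying this dictionary precisely in the type $C$ setting: one must confirm that the parabolic and double-coset combinatorics recorded by $\Lambda^{\imath}(n,d)$ and $\Pi_{n,d}^{\imath}$ interact with the Kazhdan--Lusztig basis of $\HH_{C_d}$ exactly as in Du's type $A$ argument, so that the passage between products in $S^{\imath}(n,d)$ and products in $\HH$ is faithful to the cell structure, and in particular that the lifting step for the converse directions produces genuine elements of $\Pi_{n,d}^{\imath}$. Since $W_{C_d}\cong W_{B_d}$ and the combinatorial data run parallel to the $\jmath$ case already handled in Proposition~\ref{lem:d=d2=d000ad}, I expect Du's proof to transfer with only bookkeeping changes, so that the remaining details may safely be omitted.
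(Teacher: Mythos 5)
Your overall strategy coincides with the paper's: the paper proves this proposition with the one-line remark that it ``can be proved in a manner similar to Proposition \ref{lem:d=d2=d000ad}'', which itself defers to \cite[Lemma 2.2 and Corollary 2.3]{Du96}, and your plan---positivity of the structure constants via the intersection cohomology construction in \cite{BKLW18}, the reformulation of the preorders in Lemma \ref{another-charact-cells}, the row/column constraints coming from composition of $\HH$-homomorphisms, and Du's dictionary between products of canonical basis elements of the Schur algebra and products of Kazhdan--Lusztig basis elements of $\HH$---is exactly that argument. Parts (1) and (2) of your sketch are sound as written.

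The one step that would fail as literally phrased is the converse direction of (3). You propose to conclude from $w_{A}^{+}\sim_{LR}w_{B}^{+}$ by ``lifting a chain of $\sim_{L}$ and $\sim_{R}$ relations connecting $w_{A}^{+}$ to $w_{B}^{+}$ to matrices in $\Pi_{n,d}^{\imath}$''. This cannot be done chain-by-chain: the intermediate elements of such a chain are arbitrary elements of $W_{C_d}$, and an arbitrary element is in general not the longest element of any double coset $W_{\mu}xW_{\nu}$ with $\mu,\nu\in\Lambda^{\imath}(n,d)$, hence corresponds to no matrix in $\Pi_{n,d}^{\imath}$ and admits no lift. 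The actual mechanism---this is the content of \cite[Corollary 2.3]{Du96}, and the paper writes it out in full in its proof of the parallel Proposition \ref{lem:d=d2=d000ad-i} for $\tilde{S}^{\imath}(n,d)$---is to replace the chain by a single intermediate element: for a finite Weyl group, $w_{A}^{+}\sim_{LR}w_{B}^{+}$ implies $w_{A}^{+}\sim_{L}x\sim_{R}w_{B}^{+}$ for some single $x$ (cf. \cite[Proposition 18.4]{Lu03}); then, since $\sim_{L}$ preserves right descent sets and $\sim_{R}$ preserves left descent sets, $\mathcal{R}(x)=\mathcal{R}(w_{A}^{+})$ contains all simple reflections of $W_{co(A)}$ and $\mathcal{L}(x)=\mathcal{L}(w_{B}^{+})$ contains all simple reflections of $W_{ro(B)}$, so by Curtis's criterion \cite[(1.2)(i)]{Cur85} $x$ is the longest element of the double coset $W_{ro(B)}\,x\,W_{co(A)}$, i.e.\ $x=w_{D}^{+}$ for some $D\in\Pi_{n,d}^{\imath}$; finally part (2), applied twice, gives $\{A\}\sim_{L}\{D\}\sim_{R}\{B\}$ and hence $\{A\}\sim_{LR}\{B\}$. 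You correctly flagged this lifting as the main obstacle, but its resolution requires this refinement-plus-descent-set argument rather than a transfer of an arbitrary chain.
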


\subsection{Another description of two-sided cells in $\HH$}

In this subsection, we shall recall another description of two-sided cells in $\HH$ with respect to the Kazhdan--Lusztig basis following \cite{BV82}.

We define the symbols in type $C_{d}$ and the set $\Phi_{d}$ as in \S\ref{d5}. Given a symbol
\begin{align*}
\Lambda={\lambda_1<\lambda_2<\cdots <\lambda_{m+1} \choose \mu_1<\mu_2<\cdots <\mu_{m}}
\end{align*}
for some $m\in \mathbb{Z}_{\geq 0}$, we take the set $\{2\lambda_{i}, 2\mu_{j}+1\:|\:1\leq i\leq m+1, 1\leq j\leq m\}$ and order it in a decreasing sequence, say $(\nu_{2m+1},\ldots,\nu_{1})$. Since $\sum_{i=1}^{2m+1}\nu_{i}=2d+2m^{2}+m$ and $\nu_{2m+1}>\nu_{2m}>\cdots>\nu_{1}$, we see that $(\nu_{2m+1}-(2m+1)+1, \nu_{2m}-2m+1,\ldots,\nu_1-1+1)$ is a partition of $2d$, which we shall denote by $par_{\imath}(\Lambda)$. By definition, we see that if $\Lambda\sim\Lambda'$, then $par_{\imath}(\Lambda)$ differs from $par_{\imath}(\Lambda')$ by possibly some 0's, which we regard as the same partition and denote it by $par_{\imath}[\Lambda]$. Let $\mathfrak{P}_{\imath}$ be the set of partitions $par_{\imath}[\Lambda]$, where $[\Lambda]$ runs over $\Phi_{d}$.

Similar to $\mathfrak{S}_{2d+1}$, we can define the Robinson--Schensted algorithm for $\mathfrak{S}_{2d}$. We shall identify $W_{C_d}$ with the group of permutations $w$ on the set $\{-d, -(d-1),\ldots,-1,1,$ $\ldots,d-1,d\}$ such that $w(-i)=-w(i)$ for any $i$. Under the identification, we have
\begin{align*}
s_{0}=(-1, 1),\quad s_{i}=(-i-1, -i)(i, i+1) \text{ for $1\leq i\leq d-1$.}
\end{align*}
For $w\in W_{C_d}$, we regard it as an element of $\mathfrak{S}_{2d}$ under the identification $-d\leftrightarrow 1, \ldots, -1\leftrightarrow d, 1\leftrightarrow d+1, \ldots, d\leftrightarrow 2d$; by applying the Robinson--Schensted correspondence for $\mathfrak{S}_{2d}$, we obtain a pair of standard tableaux $(T_{\imath}(w), T_{\imath}(w^{-1}))$ attached to it. We denote by $PT_{\imath}(w)$ the shape of $T_{\imath}(w)$, which is a partition of $2d$. Then we have the following proposition due to Barbasch and Vogan.
\begin{prop} $($see \cite[Proposition 17]{BV82}$)$
\label{lem:d=d2=d000adadc-i}
For any $w\in W_{C_d}$, $PT_{\imath}(w)$ belongs to the set $\mathfrak{P}_{\imath}$.
\end{prop}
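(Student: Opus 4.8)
The plan is to follow Barbasch and Vogan's proof of the type $B_d$ statement (Proposition~\ref{lem:d=d2=d000adadc}) essentially verbatim, adjusting only for the parity change that distinguishes type $C_d$ from type $B_d$. Three ingredients change: the ambient symmetric group is $\mathfrak{S}_{2d}$ rather than $\mathfrak{S}_{2d+1}$; the identification of $W_{C_d}$ with signed permutations has no fixed letter $0$, so the order-reversing involution $\iota\colon i\mapsto 2d+1-i$ on $\{1,\dots,2d\}$ is fixed-point-free; and the symbol-to-partition map is $par_{\imath}$, built from the beta-numbers $\{2\lambda_i\}\cup\{2\mu_j+1\}$ rather than $\{2\lambda_i+1\}\cup\{2\mu_j\}$.

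First I would give an intrinsic description of $\mathfrak{P}_{\imath}$. Read through the abacus, the multiset $\{2\lambda_i\mid 1\le i\le m+1\}\cup\{2\mu_j+1\mid 1\le j\le m\}$ is exactly the beta-set, with $2m+1$ beads, of $par_{\imath}(\Lambda)$; it carries $m+1$ beads on the even runner and $m$ on the odd runner. Sliding all beads upward therefore fills the positions $\{0,1,\dots,2m\}$, so the $2$-core of $par_{\imath}(\Lambda)$ is empty; conversely every partition of $2d$ with empty $2$-core has, for $m\gg 0$, beta-set split as $m+1$ even and $m$ odd, hence arises as some $par_{\imath}[\Lambda]$. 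Thus $\mathfrak{P}_{\imath}$ is precisely the set of partitions of $2d$ tileable by dominoes, equivalently those with empty $2$-core, equivalently those with balanced chessboard colouring. This reduces the proposition to the single assertion that $PT_{\imath}(w)$ has empty $2$-core for every $w\in W_{C_d}$. (This runs parallel to the type $B_d$ case, where $2d+1$ and the $2$-core $(1)$ replace $2d$ and the empty $2$-core.)

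To prove that assertion I would exploit the symmetry coming from the embedding $W_{C_d}\hookrightarrow\mathfrak{S}_{2d}$. The defining relation $w(-i)=-w(i)$ becomes $w(2d+1-i)=2d+1-w(i)$, i.e.\ $w$ commutes with $\iota=w_0$, the longest element of $\mathfrak{S}_{2d}$; equivalently the graph $\{(i,w(i))\}$ is invariant under the $180^{\circ}$ rotation $\rho=\iota\times\iota$, which is fixed-point-free because $\iota$ fixes no letter. Since $\rho$ preserves order, it carries increasing (resp.\ decreasing) subsequences of $w$ to increasing (resp.\ decreasing) subsequences, so by Greene's theorem the Greene invariants of $w$ are $\rho$-symmetric, and the absence of $\rho$-fixed points forces the chessboard colouring of $PT_{\imath}(w)$ to be balanced. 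The cleanest packaging is via domino insertion: the theory of Barbasch--Vogan and Garfinkle shows that applying ordinary Robinson--Schensted to the length-$2d$ word obtained from $w$ yields the same shape as domino-inserting the signed permutation $w$, and the output of domino insertion is a standard domino tableau, whose shape is domino-tileable by construction.

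The main obstacle is exactly this last identification: one must verify that ordinary Robinson--Schensted on the word coming from $w$ produces a domino-tileable shape, that is, that the rotational symmetry $\rho$ genuinely forces the $2$-core to vanish rather than merely constraining the Greene invariants. This is the combinatorial heart of \cite[Proposition~17]{BV82}, and the fixed-point-freeness of $\iota$ (the key distinction from type $B_d$) is what yields empty $2$-core instead of $2$-core $(1)$. Once it is in place, combining it with the description of $\mathfrak{P}_{\imath}$ from the second paragraph finishes the proof, with all parity bookkeeping parallel to the type $B_d$ case.
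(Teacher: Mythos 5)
Your proposal is correct, but note that the paper offers no proof of this proposition at all: it is imported directly from \cite[Proposition 17]{BV82}, which is exactly the source to which you yourself defer the "combinatorial heart" (the fact that the fixed-point-free rotational symmetry forces an empty $2$-core, equivalently the agreement of ordinary Robinson--Schensted with domino insertion). Your surrounding reductions are accurate --- the identification of $\mathfrak{P}_{\imath}$ with the partitions of $2d$ of empty $2$-core via beta-sets, and the parity bookkeeping distinguishing the type $C_d$ case (no fixed letter, empty $2$-core) from the type $B_d$ case ($2$-core $(1)$) --- so your sketch is a faithful reconstruction of the cited argument and follows essentially the same route as the paper.
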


Recall that in \S\ref{d5} we have defined an equivalence relation $\approx$ on $\Phi_{d}$. By an argument on \cite[p.80]{Mc96}, we see if $par_{\imath}[\Lambda]=par_{\imath}[\Lambda']$, then $[\Lambda]=[\Lambda']$. Therefore, the equivalence relation $\approx$ on $\Phi_{d}$ induces an equivalence relation on $\mathfrak{P}_{\imath}$, which we denote also by $\approx$; we say that $par_{\imath}[\Lambda]\approx par_{\imath}[\Lambda']$ if and only if $[\Lambda]\approx[\Lambda']$. Then we have another explicit description of two-sided cells in $\HH$.
\begin{prop} $($see \cite[Theorem 18]{BV82}$)$
\label{theor:d=d2=d000adadc-i}
For any two elements $w, w'\in W_{C_d}$, we have $w\sim_{LR} w'$ if and only if $PT_{\imath}(w)\approx PT_{\imath}(w')$.
\end{prop}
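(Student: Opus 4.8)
The plan is to establish Proposition \ref{theor:d=d2=d000adadc-i} as the exact type $C_d$ analogue of Proposition \ref{theor:d=d2=d000adadc}, both being instances of Barbasch--Vogan's Theorem 18, which is formulated uniformly across types $B$ and $C$. Concretely, I would run the same argument as in the type $B_d$ case, but with the type $C$ conventions throughout: the embedding $W_{C_d} \hookrightarrow \mathfrak{S}_{2d}$ in place of $W_{B_d} \hookrightarrow \mathfrak{S}_{2d+1}$, the symbol-to-partition map $par_{\imath}$ (using the multiset $\{2\lambda_i, 2\mu_j+1\}$) in place of $par$, and the invariant $PT_{\imath}$ in place of $PT$.

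First I would record the input data: by Proposition \ref{lem:d=d2=d000adadc-i} the shape $PT_{\imath}(w)$ always lies in $\mathfrak{P}_{\imath}$, and by construction $\mathfrak{P}_{\imath}$ is precisely the image of $\Phi_{d}$ under $par_{\imath}$, with the property (noted just above the statement) that $par_{\imath}[\Lambda]=par_{\imath}[\Lambda']$ forces $[\Lambda]=[\Lambda']$. These facts ensure that the relation $\approx$, originally defined on $\Phi_{d}$, transports unambiguously to $\mathfrak{P}_{\imath}$, so that the right-hand side of the asserted equivalence is well defined before one attempts to match it with the cell structure.

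The core of the argument is to show that the assignment $w \mapsto PT_{\imath}(w)$ descends to a bijection between the two-sided cells of $\HH$ and the $\approx$-classes in $\mathfrak{P}_{\imath}$. Here I would follow Barbasch and Vogan: one checks that the fibres of the generalized Robinson--Schensted map are compatible with the left- and right-cell structure, so that the shape is a two-sided cell invariant, and then identifies exactly when two shapes label the same two-sided cell, namely precisely when the associated symbols are $\approx$-equivalent. Since the canonical isomorphism $W_{C_d}\cong W_{B_d}$ identifies the two Hecke algebras and hence the relation $\sim_{LR}$ on $\HH$, an alternative and perhaps more economical route is to transfer the already-available type $B$ description of Proposition \ref{theor:d=d2=d000adadc}: it then suffices to prove the purely combinatorial statement that $PT(w)\approx PT(w')$ if and only if $PT_{\imath}(w)\approx PT_{\imath}(w')$, comparing the $\mathfrak{S}_{2d+1}$- and $\mathfrak{S}_{2d}$-shapes obtained from the same underlying signed permutation.

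I expect the main obstacle to be bookkeeping the conventions. The type $C$ embedding omits the fixed symbol $0$ that appears in the type $B$ setup, and the parity roles of the two rows of the symbol are interchanged in passing from $par$ to $par_{\imath}$; one must verify that these two changes together produce exactly the shapes that Barbasch and Vogan attach to type $C$, and that the induced relation $\approx$ on $\mathfrak{P}_{\imath}$ coincides with their equivalence on two-sided cells. In the reduction-to-type-$B$ approach the same difficulty reappears as the need to relate the insertion tableau of a word to that of the word with its middle, self-fixed letter deleted, and to check that this deletion respects $\approx$.
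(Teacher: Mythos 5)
Your proposal is consistent with the paper, which in fact gives no independent proof of Proposition \ref{theor:d=d2=d000adadc-i}: just like its type $B$ counterpart (Proposition \ref{theor:d=d2=d000adadc}), it is quoted directly from Barbasch--Vogan \cite[Theorem 18]{BV82}, and your primary route --- checking that injectivity of $par_{\imath}$ on $\Phi_{d}$ makes $\approx$ well defined on $\mathfrak{P}_{\imath}$ and then invoking the Barbasch--Vogan cell description in its type $C$ formulation --- is exactly this citation spelled out. Your alternative reduction to the type $B$ statement, via the combinatorial claim that $PT(w)\approx PT(w')$ if and only if $PT_{\imath}(w)\approx PT_{\imath}(w')$, is not what the paper does and would still require proving that nontrivial comparison of $\mathfrak{S}_{2d+1}$- and $\mathfrak{S}_{2d}$-shapes, so the direct appeal to \cite{BV82} remains the cleaner and intended route.
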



We denote by $\mathcal{P}_{d}^{\imath}$ the set of partitions of $2d$ in which every {\em odd} part appears with even multiplicity. Then there exists a bijection between $\mathcal{P}_{d}^{\imath}$ and the set of nilpotent orbits in type $C_{d}$ (cf. \cite[Theorem 5.1.3]{CoMc93}). For $\lambda\in \mathcal{P}_{d}^{\imath}$, we call it {\em special} if we also have $\lambda^{t}\in \mathcal{P}_{d}^{\imath}$ (cf. \cite[\S6.3]{CoMc93}). Denote by $\mathcal{SP}_{d}^{\imath}$ the set of special partitions of $2d$. Then the map, $\pi_{\imath} :[\Lambda]\mapsto par_{\imath}[\Lambda]$, is a bijection between $\mathcal{G}_{W_{B}}$ and $\mathcal{SP}_{d}^{\imath}$ (cf. \cite[p.80]{Mc96}). Thus, by Proposition \ref{theor:d=d2=d000adadc-i}, we see that there is a bijection between the set of two-sided cells in $\HH$ and $\mathcal{SP}_{d}^{\imath}$.


\subsection{Description of two-sided cells in $S^{\imath}(n, d)$}

In this subsection, we shall give a combinatorial description of two-sided cells in $S^{\imath}(n, d)$.

Set $([-r, r]\setminus \{0\})^{2}=([-r, r]\setminus \{0\})\times ([-r, r]\setminus \{0\})\subset \mathbb{Z}\times \mathbb{Z}$. It is easy to see that $([-r, r]\setminus \{0\})^{2}$ is a finite partially ordered set with the induced order by setting $(i, j)\leq (i', j')$ if $i\geq i'$ and $j\leq j'$. If $A=(a_{ij})_{i, j\in [-r, r]\setminus \{0\}}\in \Pi_{n, d}^{\imath}$ and $F$ is a $k$-chain family of $([-r, r]\setminus \{0\})^{2}$, we define $\mathfrak{s}_{F}(A)$ to be the sum of the entries $a_{ij}$ with $(i, j)\in F$, and call it the $F$-sum of $A$. Let $\mathfrak{s}_{k}(A)$ be the maximum value of $F$-sums of $A$ for all $k$-chain families $F$. We have the following result similar to Theorem \ref{theor:d=d2=d000=du-cells}.

\begin{thm} $($see \cite[Theorem 1.2]{Du96}$)$
\label{theor:d=d2=d000=du-cells-i-i}
For each $A\in \Pi_{n, d}^{\imath}$, we define $\sigma_{\imath}(A)=\mathfrak{s}_{i}(A)-\mathfrak{s}_{i-1}(A)$ (with the convention that $\mathfrak{s}_{0}(A)=0$) for all $1\leq i\leq n$. Then $\sigma_{\imath}(A)=(\sigma_{1}(A), \sigma_{2}(A), \ldots, \sigma_{n}(A))$ is a partition of $2d$.
\end{thm}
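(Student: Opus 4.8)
The statement is the $\imath$-analogue of Theorem~\ref{theor:d=d2=d000=du-cells}, and the plan is to deduce it in the same way, namely as an instance of Greene's theorem \cite{Gr79} in the weighted form established by Du \cite[Theorem 1.2]{Du96}. Concretely, I regard $A=(a_{ij})$ as a nonnegative integer weight function on the finite poset $P=([-r,r]\setminus\{0\})^{2}$, assigning weight $a_{ij}$ to the point $(i,j)$, so that $\mathfrak{s}_{k}(A)$ is precisely the maximal total weight of a union of $k$ chains in $P$. To show that $\sigma_{\imath}(A)$ is a partition of $2d$ it then suffices to establish three things: that each $\sigma_{i}(A)\geq 0$, that $\sigma_{i}(A)\geq \sigma_{i+1}(A)$ for $1\leq i\leq n-1$, and that $\sum_{i=1}^{n}\sigma_{i}(A)=2d$.

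The first and third assertions are immediate. For nonnegativity, observe that any union of $i-1$ chains is also a union of $i$ chains (adjoin the empty chain), whence $\mathfrak{s}_{i-1}(A)\leq \mathfrak{s}_{i}(A)$ and so $\sigma_{i}(A)\geq 0$. For the total, I note that for each fixed $i\in [-r,r]\setminus\{0\}$ the set of points $\{(i,j):j\in [-r,r]\setminus\{0\}\}$ is a chain of $P$, since $(i,j)\leq (i,j')$ exactly when $j\leq j'$; these $n=2r$ rows are pairwise disjoint and cover $P$, so the corresponding $n$-chain family has weight $\sum_{i,j}a_{ij}=2d$. As no $k$-chain family can have weight exceeding the full sum, this forces $\mathfrak{s}_{n}(A)=2d$, and since $\mathfrak{s}_{0}(A)=0$ the telescoping sum gives $\sum_{i=1}^{n}\sigma_{i}(A)=\mathfrak{s}_{n}(A)-\mathfrak{s}_{0}(A)=2d$.

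The substance of the theorem is the second assertion, the concavity $\mathfrak{s}_{i+1}(A)-\mathfrak{s}_{i}(A)\leq \mathfrak{s}_{i}(A)-\mathfrak{s}_{i-1}(A)$, equivalently $2\mathfrak{s}_{i}(A)\geq \mathfrak{s}_{i-1}(A)+\mathfrak{s}_{i+1}(A)$, which is exactly the content of Greene's theorem. The plan is to reduce the weighted statement to the classical unweighted one: replace each point $(i,j)$ of $P$ by a totally ordered chain of $a_{ij}$ copies, all bearing the same comparabilities to the rest of $P$ as $(i,j)$ did, to obtain an ordinary finite poset $\widehat{P}$ with $|\widehat{P}|=2d$. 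Because the copies of a given point are mutually comparable, a maximal-weight union of $k$ chains in $P$ corresponds to a maximal union of $k$ chains in $\widehat{P}$ and conversely, so $\mathfrak{s}_{k}(A)$ equals the $k$-th Greene number of $\widehat{P}$; the concavity of these numbers is then Greene's theorem applied to $\widehat{P}$. I expect this concavity step to be the main obstacle, as it is the one genuinely nontrivial input, the order-theoretic combinatorics behind Greene's theorem, whereas the identification of the poset, the row-covering, and the blow-up are bookkeeping. Alternatively, one could transfer the result from the already-proved $\jmath$-case (Theorem~\ref{theor:d=d2=d000=du-cells}) by embedding $A$ into a matrix of $\Pi_{n+1,d}$ via the insertion of a zero row and column crossing in a central entry $1$; but verifying that this insertion alters the numbers $\mathfrak{s}_{k}$ in a controlled way is itself delicate, so I would favour the direct appeal to Du's theorem.
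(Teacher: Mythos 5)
Your proposal is correct and takes essentially the same route as the paper: the paper offers no argument at all for this statement, simply citing \cite[Theorem 1.2]{Du96}, which is precisely the weighted form of Greene's theorem you invoke (the poset $([-r,r]\setminus\{0\})^{2}$ is order-isomorphic to the $2r\times 2r$ grid Du treats, so his result applies verbatim). Your nonnegativity and row-covering observations, together with the blow-up reduction of the weighted concavity to Greene's unweighted theorem, are a sound reconstruction of exactly what that citation relies on, so there is nothing to correct.
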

Recall that in \S\ref{d4} we have associated a permutation $y_{A}$ to each $A\in \Pi_{n, d}$. In a similar way, for each $A\in \Pi_{n, d}^{\imath}$, we can associate a permutation $\tilde{y}_{A}$ to it by replacing $\{-d, -(d-1),\ldots,-1,0,1,\ldots,d-1,d\}$ with $\{-d, -(d-1),\ldots,-1,1,\ldots,d-1,d\}$.


By \cite[Lemma 6.1.1]{LL21}, we have an analog of Lemma \ref{a3} for $W_{C_d}$ and $\Pi_{n, d}^{\imath}$. By arguments similar to the proofs of Lemmas \ref{longest element}, \ref{partition-longest element} and Theorem \ref{theor:charac of cells in Schur algs}, we can prove the following three results (using Propositions \ref{lem:d=d2=d000ad-i-i} and \ref{theor:d=d2=d000adadc-i}).
\begin{lem}\label{longest element-i}
Set $I^{\imath}=[1,r]\times ([-r, r]\setminus \{0\})\subset \mathbb{Z}\times \mathbb{Z}$. For each $A=(ro(A), w_{A}^{+}, co(A))\in \Pi_{n, d}^{\imath}$, we have $\tilde{y}_{A}=w_{A}^{+}$, and moreover,
\begin{align}\label{e1}
\ell'(w_{A}^{+})=d^{2}-\frac{1}{2}\sum_{(i,j)\in I^{\imath}}\Bigg(\sum\limits_{\substack{x< i\\y< j}}a_{ij}a_{xy}+\sum\limits_{\substack{x> i\\y> j}}a_{ij}a_{xy}\Bigg)-\frac{1}{2}\sum\limits_{\substack{i> 0\\j> 0}}a_{ij}.
\end{align}
\end{lem}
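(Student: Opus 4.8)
The plan is to mimic the proof of Lemma~\ref{longest element} step by step, with the ambient interval $[-d,d]$ replaced everywhere by $[-d,d]\setminus\{0\}$. Because $W_{C_d}$ acts on a set with no fixed point, the matrices in $\Pi_{n,d}^{\imath}$ carry no central entry $a_{00}$, and consequently the normalisation $a_{ij}^{\natural}$ of \eqref{a2} never occurs: each $a_{ij}^{\natural}$ of the type-$B$ computation becomes an honest $a_{ij}$. First I would invoke the $C_d$-analog of Lemma~\ref{a3} provided by \cite[Lemma~6.1.1]{LL21}, so that for any $w$ in the double coset $W_{\lambda}w_{A}^{+}W_{\mu}$ attached to $A$ (with $\lambda=ro(A)$, $\mu=co(A)$) we have $\sharp\big(R_{i}^{\lambda}\cap w(R_{j}^{\mu})\big)=a_{ij}$. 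I would then introduce, as in \eqref{a4}--\eqref{a5}, the quantities
\begin{align*}
N_1=\sum_{\substack{i<x\\ j<y}}a_{ij}a_{xy},\qquad N_2=\sum_{\substack{x<i\\ y<j}}a_{ij}a_{xy}
\end{align*}
with all indices running over $[-r,r]\setminus\{0\}$, and bound each of them above by the number of concordant pairs of $w$ by selecting representatives inside the intersections $R_{i}^{\lambda}\cap w(R_{j}^{\mu})$.

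The combinatorial core is the expansion of $(2d)^2=\sharp\big\{(a,b)\in([-d,d]\setminus\{0\})^2\big\}$, replacing the expansion of $(2d+1)^2$ from the type-$B$ proof. Splitting by the sign of $a$ and the relative orders of $a,b$ and of $w(a),w(b)$, every summand involving the index $0$ disappears and the diagonal $a=b$ contributes $2d$. The one new ingredient is the length function: under the canonical isomorphism $W_{C_d}\cong W_{B_d}$ the word length is unchanged, yet its realisation over the set $[-d,d]\setminus\{0\}$ picks up an extra sign-change term. Concretely, writing $\mathrm{neg}(w)=\sharp\{a\in[1,d]\mid w(a)<0\}$, the length formula reads
\begin{align*}
2\ell'(w)=\sharp\big\{(a,b)\in[1,d]\times([-d,d]\setminus\{0\})\mid \text{$(a,b)$ discordant for $w$}\big\}+\mathrm{neg}(w),
\end{align*}
the term $\mathrm{neg}(w)$ accounting for the pairs with $b=0$ that are included in the $W_{B_d}$ formula \cite[(2.1.7)]{LL21} but excluded from the main count here. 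Feeding this together with the symmetry $a_{ij}=a_{-i,-j}$ into the expansion yields, for every $w$ in the coset, the inequality
\begin{align*}
4\ell'(w)+2N^{\imath}\le 4d^{2}-2d+2\,\mathrm{neg}(w),\qquad N^{\imath}=\sum_{(i,j)\in I^{\imath}}\Big(\sum_{\substack{x<i\\ y<j}}+\sum_{\substack{x>i\\ y>j}}\Big)a_{ij}a_{xy};
\end{align*}
here the clean identity $N_1+N_2=2N^{\imath}$ (with no extra $R$-term, in contrast to the type-$B$ relation $N_1+N_2=2N+2R$) is obtained by pairing each summand with its image under $(i,j)\mapsto(-i,-j)$.

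Finally I would verify, exactly as in \eqref{b2}, that the permutation $\tilde{y}_A$ read off from the pseudo-matrix $A_{+}$ satisfies $C_{ij}=R_{i}^{\lambda}\cap\tilde{y}_A(R_{j}^{\mu})$, whence $\tilde{y}_A$ lies in $W_{\lambda}w_{A}^{+}W_{\mu}$ and the estimate above becomes an equality for $w=\tilde{y}_A$. The uniqueness of the longest element in the double coset \cite[Proposition~9.15(e)]{Lu03} then gives $\tilde{y}_A=w_{A}^{+}$. Substituting $w=\tilde{y}_A$ and using the identity $\mathrm{neg}(\tilde{y}_A)=d-R$ with $R=\sum_{i>0,\,j>0}a_{ij}$ (the type-$C$ analog of $R=Q_{y_A}$ in the proof of Lemma~\ref{longest element}) collapses the equality to
\begin{align*}
4\ell'(w_{A}^{+})+2N^{\imath}+2R=4d^{2},
\end{align*}
which is precisely \eqref{e1}. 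The main obstacle is the bookkeeping of the sign-change term: one must follow how the $\mathrm{neg}(w)$ coming from the type-$C$ length formula combines with the disappearance of all $0$-indexed contributions so that, after the $2d$'s cancel, exactly the summand $-\tfrac12\sum_{i>0,j>0}a_{ij}$ is left over. No idea beyond the type-$B$ argument is required.
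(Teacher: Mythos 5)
Your proposal is correct and takes essentially the same approach as the paper: the paper's own proof of Lemma \ref{longest element-i} is just a one-line reference to the type-$B$ argument of Lemma \ref{longest element}, and your adaptation supplies exactly the details that reference presupposes — dropping the $\natural$-normalization, working over $[-d,d]\setminus\{0\}$, the corrected length formula $2\ell'(w)=D_w+\mathrm{neg}(w)$, the clean identity $N_1+N_2=2N^{\imath}$ (no $R$-term), and the bookkeeping $\mathrm{neg}(\tilde{y}_A)=d-R$ with $R=\sum_{i>0,j>0}a_{ij}$. The final identity $4\ell'(w_A^{+})+2N^{\imath}+2R=4d^{2}$ is exactly \eqref{e1}, and your closing appeal to the uniqueness of the longest element in the double coset is the same (equally terse) step the paper itself uses in the type-$B$ proof.
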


\begin{lem}
\label{partition-longest element-i}
For each $A=(ro(A), w_{A}^{+}, co(A))\in \Pi_{n, d}^{\imath}$, we have $\sigma_{\imath}(A)=PT_{\imath}(w_{A}^{+})$.
\end{lem}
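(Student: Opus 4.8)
The plan is to transport the proof of Lemma \ref{partition-longest element} to the type $C$ setting, the only structural change being that the ambient index set $[-d,d]$ of size $2d+1$ (with its central fixed point $0$) is replaced throughout by $[-d,d]\setminus\{0\}$ of size $2d$. First I would invoke Lemma \ref{longest element-i} to replace $w_{A}^{+}$ by the explicit permutation $\tilde{y}_{A}$, so that all subsequent combinatorics can be performed directly on the block decomposition of $\tilde{y}_{A}$ recorded by the matrix $A$. By the analog of Lemma \ref{a3} for $W_{C_d}$ (cf.\ \cite[Lemma 6.1.1]{LL21}), the entry $a_{ij}$ equals $\sharp\big(R_{i}^{\lambda}\cap \tilde{y}_{A}(R_{j}^{\mu})\big)$ with $\lambda=ro(A)$, $\mu=co(A)$, so the matrix entries exactly record the sizes of the blocks into which $\tilde{y}_{A}$ partitions $[-d,d]\setminus\{0\}$.

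Next I would establish the Greene/chain-family interpretation of $\mathfrak{s}_{k}(A)$, following \cite[\S3.5, proof of Theorem 2.1]{Du96} in the symmetric-group picture for $\mathfrak{S}_{2d}$. Because the partial order on $([-r,r]\setminus\{0\})^{2}$ places $(i,j)\leq (i',j')$ precisely when $i\geq i'$ and $j\leq j'$, a $k$-chain family $F$ selects matrix positions along $k$ reversed staircases, and the $F$-sum aggregates the corresponding block sizes $a_{ij}$. Optimizing over all $k$-chain families, one sees that $\mathfrak{s}_{k}(A)$ is the maximal cardinality of a subset of $[-d,d]\setminus\{0\}$ expressible as a disjoint union of $k$ subsets, each of whose natural order is reversed by $w_{A}^{+}$. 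This is exactly the Greene invariant computing the shape of the Robinson--Schensted tableau, so $\sigma_{\imath}(A)$ coincides with the partition attached to $w_{A}^{+}$ in the sense of \cite[\S7]{Lu85b}, now regarded as an element of $\mathfrak{S}_{2d}$. Invoking \cite[Chapter 21]{Shi86} to identify that partition with the shape of the insertion tableau then gives $\sigma_{\imath}(A)=PT_{\imath}(w_{A}^{+})$; that $\sigma_{\imath}(A)$ is genuinely a partition of $2d$ is already guaranteed by Theorem \ref{theor:d=d2=d000=du-cells-i-i}.

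The main obstacle is the second step, namely checking that the poset order on $([-r,r]\setminus\{0\})^{2}$ together with the block structure of $\tilde{y}_{A}$ really does translate $k$-chain families into unions of $k$ order-reversed subsets. One must verify both directions: that no chain family can beat the count coming from the $a_{ij}$, and that every optimal collection of $k$ reversed subsets is realized by some chain family. The argument of \cite{Du96} supplies both. Fortunately the removal of the central index $0$ (present in type $B$ but absent here) only simplifies the bookkeeping, since there is no longer a fixed point $w_{A}^{+}(0)=0$ that must be tracked separately in the cardinality count.
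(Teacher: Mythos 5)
Your proposal is correct and takes essentially the same approach as the paper, which proves this lemma simply by asserting that the proof of the type $B$ analog (Lemma \ref{partition-longest element}) carries over: that proof is precisely your chain of steps, namely using Lemma \ref{longest element-i} to replace $w_{A}^{+}$ by $\tilde{y}_{A}$, Du's Greene-invariant argument from \cite[\S 3.5, proof of Theorem 2.1]{Du96} to interpret $\mathfrak{s}_{k}(A)$ as the maximal cardinality of a disjoint union of $k$ order-reversed subsets, and then \cite[\S 7]{Lu85b} together with \cite[Chapter 21]{Shi86} to identify the resulting partition with the Robinson--Schensted shape $PT_{\imath}(w_{A}^{+})$. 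Your remark that deleting the central index $0$ only simplifies the bookkeeping matches the paper's implicit treatment of the type $C$ case.
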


\begin{thm}
\label{theor:charac of cells in Schur algs-i}
For any two elements $A, A'\in \Pi_{n, d}^{\imath}$, we have $\{A\}\sim_{LR} \{A'\}$ if and only if $\sigma_{\imath}(A)\approx \sigma_{\imath}(A')$.
\end{thm}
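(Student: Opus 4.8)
The plan is to prove the statement as a formal chain of three equivalences, exactly mirroring the proof of Theorem~\ref{theor:charac of cells in Schur algs} in the $\jmath$-setting, with all of the genuine work pushed into the auxiliary lemmas stated above.

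First I would apply Proposition~\ref{lem:d=d2=d000ad-i-i}(3) to transport the two-sided cell relation on the canonical basis of $S^{\imath}(n,d)$ down to the Weyl group: $\{A\}\sim_{LR}\{A'\}$ holds if and only if $w_{A}^{+}\sim_{LR}w_{A'}^{+}$ in $W_{C_d}$. This is the step that moves the problem from the Schur-type algebra to the Hecke algebra, and it rests on the positivity of the structure constants $g_{A,B}^{C}$ together with Lemma~\ref{another-charact-cells}. Next I would invoke the type $C$ Barbasch--Vogan description, Proposition~\ref{theor:d=d2=d000adadc-i}, to rewrite $w_{A}^{+}\sim_{LR}w_{A'}^{+}$ as the combinatorial condition $PT_{\imath}(w_{A}^{+})\approx PT_{\imath}(w_{A'}^{+})$ on the Robinson--Schensted shapes of the images of $w_{A}^{+},w_{A'}^{+}$ in $\mathfrak{S}_{2d}$. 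Finally, Lemma~\ref{partition-longest element-i} identifies these shapes with the Greene-type partitions, $\sigma_{\imath}(A)=PT_{\imath}(w_{A}^{+})$ and $\sigma_{\imath}(A')=PT_{\imath}(w_{A'}^{+})$; substituting into the previous equivalence yields $\{A\}\sim_{LR}\{A'\}$ if and only if $\sigma_{\imath}(A)\approx\sigma_{\imath}(A')$, as desired.

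Given these three inputs the theorem is immediate, so I expect the main obstacle to lie not in the chaining itself but in the supporting Lemmas~\ref{longest element-i} and~\ref{partition-longest element-i}, whose proofs the paper defers by analogy to Du. Lemma~\ref{longest element-i} must be re-derived in the $\imath$-setting: one shows $\tilde{y}_{A}=w_{A}^{+}$ and computes $\ell'(w_{A}^{+})$ by the same pair-counting argument as in Lemma~\ref{longest element}, but now the index set is $[-r,r]\setminus\{0\}$, the permutations act on $2d$ points rather than $2d+1$, and the middle-row/column corrections involving $a_{00}^{\natural}$ disappear, leaving the simpler correction term $-\frac{1}{2}\sum_{i>0,\,j>0}a_{ij}$ in \eqref{e1}. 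The delicate point is then Lemma~\ref{partition-longest element-i}: one must check that the poset $([-r,r]\setminus\{0\})^{2}$ with the reversed order faithfully encodes the order relations inverted by $w_{A}^{+}$ on the ground set $\{-d,\dots,-1,1,\dots,d\}$, so that maximal $k$-chain families correspond to maximal unions of $k$ decreasing subsequences, whence Du's analysis in~\cite[\S3.5]{Du96} identifies $\sigma_{\imath}(A)$ with the Robinson--Schensted shape of $w_{A}^{+}$ viewed in $\mathfrak{S}_{2d}$. Once this combinatorial dictionary is verified, the chain of equivalences closes with no further difficulty.
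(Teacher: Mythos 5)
Your proposal matches the paper's own treatment: the paper proves this theorem by exactly the chain you describe, namely Proposition~\ref{lem:d=d2=d000ad-i-i}(3) to pass from $\{A\}\sim_{LR}\{A'\}$ to $w_{A}^{+}\sim_{LR}w_{A'}^{+}$, then Proposition~\ref{theor:d=d2=d000adadc-i} to convert this to $PT_{\imath}(w_{A}^{+})\approx PT_{\imath}(w_{A'}^{+})$, then Lemma~\ref{partition-longest element-i} to identify $\sigma_{\imath}(A)=PT_{\imath}(w_{A}^{+})$, all in analogy with the proof of Theorem~\ref{theor:charac of cells in Schur algs}. Your additional remarks on adapting Lemmas~\ref{longest element-i} and~\ref{partition-longest element-i} to the $\imath$-setting (index set $[-r,r]\setminus\{0\}$, permutations of $2d$ points, the modified correction term in \eqref{e1}) correctly locate where the deferred work lies, just as the paper indicates.
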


\begin{rem}\label{remark:onetoone correspondence-i}
Similar to the arguments in Remark \ref{remark:onetoone correspondence}, using Theorem \ref{theor:charac of cells in Schur algs-i} we can further show that there is a one-to-one correspondence between the set of two-sided cells in $S^{\imath}(n, d)$ and special partitions of $2d$ with at most $n$ parts.
\end{rem}

\begin{example}
(1) Assume that $n=2r=2$ and $d=2$. By \cite[Lemma 5.2]{BKLW18}, we have $\sharp \Pi_{2, 2}^{\imath}={2+2-1 \choose 2}=3$. All the $3$ elements in $\Pi_{2, 2}^{\imath}$ are as follows:
\begin{align*}
A_1=\left(\hspace{-1mm}
 \begin{array}{cc}
 2 & 0 \\
 0 & 2 \\
 \end{array}
\hspace{-1mm}\right),~~
A_2=\left(\hspace{-1mm}
 \begin{array}{ccc}
 1 & 1 \\
 1 & 1 \\
 \end{array}
\hspace{-1mm}\right),~~
A_3=\left(\hspace{-1mm}
 \begin{array}{ccc}
 0 & 2 \\
 2 & 0 \\
 \end{array}
\hspace{-1mm}\right).
\end{align*}
We have $\tilde{y}_{A_{1}}=s_{1}$, $\tilde{y}_{A_{2}}=s_{1}s_{0}s_{1}$, $\tilde{y}_{A_{3}}=s_{0}s_{1}s_{0}s_{1}$. By Lemma \cite[Lemma 6.1.1]{LL21}, we have $w_{A_{1}}=e$, $w_{A_{2}}=s_{0}$, $w_{A_{3}}=s_{0}s_{1}s_{0}$, and hence, $w_{A_{1}}^{+}=s_{1}$, $w_{A_{2}}^{+}=s_{1}s_{0}s_{1}$, $w_{A_{3}}^{+}=s_{0}s_{1}s_{0}s_{1}$, that is, $\tilde{y}_{A_{i}}=w_{A_{i}}^{+}$ for $1\leq i\leq 3$. By a direct calculation, we see that each $\ell'(w_{A_{i}}^{+})$ can be given by \eqref{e1}.

By Theorem \ref{theor:d=d2=d000=du-cells-i-i}, we have $\sigma_{\imath}(A_{1})=(2,2)$, $\sigma_{\imath}(A_{2})=(3,1)$, $\sigma_{\imath}(A_{3})=(4)$; using the Robinson--Schensted algorithm for $\mathfrak{S}_{4}$, we obtain $PT_{\imath}(w_{A_{1}}^{+})=(2,2)$, $PT_{\imath}(w_{A_{2}}^{+})=(3,1)$, $PT_{\imath}(w_{A_{3}}^{+})=(4)$. Therefore, $\sigma_{\imath}(A_{k})=PT_{\imath}(w_{A_{k}}^{+})$ for $1\leq k\leq 3$.

We have $(2,2)=par_{\imath}[\Lambda_{1}]$, $(3,1)=par_{\imath}[\Lambda_{2}]$, $(4)=par_{\imath}[\Lambda_{3}]$, where $\Lambda_{1}={0<2 \choose 1}$, $\Lambda_{2}={0<1 \choose 2}$, $\Lambda_{3}={0<3 \choose 0}$. Since $[\Lambda_{1}]\approx [\Lambda_{2}]\not\approx [\Lambda_{3}]$, that is, $(2,2)\approx (3,1)\not\approx (4)$, we see that $\{A_{1}\}, \{A_{2}\}$ lie in the same two-sided cell $\mathfrak{C}_1$, and $\{A_{3}\}$ lies in a different two-sided cell $\mathfrak{C}_2$. Since the special partitions of $4$ with at most $2$ parts are $(4)$ and $(2,2)$, we see that in this case, there is a bijection between the set of two-sided cells in $S^{\imath}(2, 2)$ and special partitions of $4$ with at most $2$ parts.

(2) Assume $n=2r=2$ and $d=3$. By \cite[Lemma 5.2]{BKLW18}, we have $\sharp \Pi_{2, 3}^{\imath}={2+3-1 \choose 3}=4$. All the $4$ elements in $\Pi_{2, 3}^{\imath}$ are as follows:
\begin{align*}
A_1=\left(\hspace{-1mm}
 \begin{array}{cc}
 3 & 0 \\
 0 & 3 \\
 \end{array}
\hspace{-1mm}\right),~~
A_2=\left(\hspace{-1mm}
 \begin{array}{ccc}
 2 & 1 \\
 1 & 2 \\
 \end{array}
\hspace{-1mm}\right),~~
A_3=\left(\hspace{-1mm}
 \begin{array}{ccc}
 1 & 2 \\
 2 & 1 \\
 \end{array}
\hspace{-1mm}\right),~~
A_4=\left(\hspace{-1mm}
 \begin{array}{ccc}
 0 & 3 \\
 3 & 0 \\
 \end{array}
\hspace{-1mm}\right).
\end{align*}

We have
\begin{align*}
\tilde{y}_{A_{1}}=s_{1}s_{2}s_{1},~ \tilde{y}_{A_{2}}=s_{2}s_{1}s_{0}s_{1}s_{2}s_{1},~ \tilde{y}_{A_{3}}=s_{1}s_{2}s_{0}s_{1}s_{0}s_{1}s_{2}s_{1}, ~\tilde{y}_{A_{4}}=s_{0}s_{1}s_{0}s_{2}s_{1}s_{0}s_{1}s_{2}s_{1}.
\end{align*}
By Lemma \cite[Lemma 6.1.1]{LL21}, we have $w_{A_{1}}=e$, $w_{A_{2}}=s_{0}$, $w_{A_{3}}=s_{0}s_{1}s_{0}$, $w_{A_{4}}=s_{0}s_{1}s_{0}s_{2}s_{1}s_{0}$, and hence, $\tilde{y}_{A_{i}}=w_{A_{i}}^{+}$ for $1\leq i\leq 4$. Moreover, each $\ell'(w_{A_{i}}^{+})$ can also be computed via \eqref{e1}.

By Theorem \ref{theor:d=d2=d000=du-cells-i-i}, we have $\sigma_{\imath}(A_{1})=(3,3)$, $\sigma_{\imath}(A_{2})=(4,2)$, $\sigma_{\imath}(A_{3})=(5,1)$, $\sigma_{\imath}(A_{4})=(6)$; using the Robinson--Schensted algorithm for $\mathfrak{S}_{6}$, we see that each $PT_{\imath}(w_{A_{k}}^{+})$ equals $\sigma_{\imath}(A_{k})$ for $1\leq k\leq 4$.

We have $(3,3)=par_{\imath}[\Lambda_{1}]$, $(4,2)=par_{\imath}[\Lambda_{2}]$, $(5,1)=par_{\imath}[\Lambda_{3}]$, $(6)=par_{\imath}[\Lambda_{4}]$, where $\Lambda_{1}={0<2 \choose 2}$, $\Lambda_{2}={0<3 \choose 1}$, $\Lambda_{3}={0<1 \choose 3}$, $\Lambda_{4}={0<4 \choose 0}$. Since $[\Lambda_{1}]\not\approx [\Lambda_{2}]\approx [\Lambda_{3}] \not\approx [\Lambda_{4}]$ and $[\Lambda_{1}]\not\approx [\Lambda_{4}]$, we see that there are exactly three two-sided cells in $S^{\imath}(2, 3)$: $\mathfrak{C}_1=\{\{A_{1}\}\}$, $\mathfrak{C}_2=\{\{A_{2}\}, \{A_{3}\}\}$, $\mathfrak{C}_3=\{\{A_{4}\}\}$. Since the special partitions of $6$ with at most $2$ parts are $(6)$, $(4,2)$ and $(3,3)$, we see that in this case, there is also a bijection between the set of two-sided cells in $S^{\imath}(2, 3)$ and special partitions of $6$ with at most $2$ parts.

(3) Assume $n=2r=2$ and $d$ is arbitrary. By \cite[Lemma 5.2]{BKLW18}, we have $\sharp \Pi_{2, d}^{\imath}={2+d-1 \choose d}=d+1$. All the $d+1$ elements in $\Pi_{2, d}^{\imath}$ are as follows:
\begin{align*}
\bigg\{A_k=\left(\hspace{-1mm}
 \begin{array}{cc}
 k & d-k \\
 d-k & k \\
 \end{array}
\hspace{-1mm}\right)\:\bigg|\:0\leq k\leq d\bigg\}.
\end{align*}
Moreover, $\sigma_{\imath}(A_{k})=(2d-k,k)$ for $0\leq k\leq d$.

It is easy to see that when $d$ is even, the special partitions of $2d$ with at most $2$ parts consist of the following $\frac{d}{2}+1$ partitions: $(2d), (2d-2, 2),\ldots, (d+2, d-2), (d,d)$; when $d$ is odd, the special partitions of $2d$ with at most $2$ parts consist of the following $\frac{d-1}{2}+2$ partitions: $(2d), (2d-2, 2), \ldots, (d+1, d-1), (d, d)$. Obviously, for each such special partition $\lambda$, there exists some $A_{k}$ such that $\sigma_{\imath}(A_{k})=\lambda$. Therefore, in this case there is a one-to-one correspondence between the set of two-sided cells in $S^{\imath}(2, d)$ and special partitions of $2d$ with at most $2$ parts.
\end{example}

Finally, similar to Conjecture \ref{theor:d=d2=d000adadcnumber}, we propose a conjecture on the number of left cells in a two-sided cell of $S^{\imath}(n, d)$. Let $\mathcal{SP}_{d}^{\imath, n}$ be the set of special partitions of $2d$ with at most $n$ parts. For each $\lambda\in \mathcal{SP}_{d}^{\imath, n}$, let $\mathbf{c}_{\lambda}^{\imath}$ be the associated two-sided cell in $S^{\imath}(n, d)$ by Remark \ref{remark:onetoone correspondence-i}.
\begin{conj}
\label{theor:d=d2=d000adadcnumber-i}
The number of left cells in $\mathbf{c}_{\lambda}^{\imath}$ equals that of semistandard domino tableaux of shape $\lambda$ with all entries in the dominoes $\leq r$.
\end{conj}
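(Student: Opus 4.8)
The plan is to reduce the statement to a purely combinatorial identity about domino tableaux, paralleling the route sketched for the $\jmath$-case in Section~\ref{d4} but with the central monomino suppressed, and then to invoke the descent (standardization) characterization of semistandard domino tableaux. First I would use Proposition~\ref{lem:d=d2=d000ad-i-i} to identify the left cells of $S^{\imath}(n,d)$ lying in $\mathbf{c}_{\lambda}^{\imath}$ with the admissible pairs $(\mu,\Gamma)$, where $\mu\in\Lambda^{\imath}(n,d)$, $\Gamma$ is a left cell of $\HH$ contained in the two-sided cell $\mathbf{c}_{\lambda}$ of $W_{C_d}$ attached to the special partition $\lambda$, and the simple reflections generating $W_{\mu}$ all lie in $\mathcal{R}(\Gamma)$. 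Indeed, Proposition~\ref{lem:d=d2=d000ad-i-i}(2) says that $\{A\}\sim_{L}\{B\}$ with $co(A)=co(B)=\mu$ exactly when $w_{A}^{+}\sim_{L}w_{B}^{+}$, and the realizability of each admissible pair by some $A\in\Pi_{n,d}^{\imath}$ follows as in Du's type~$A$ argument \cite{Du96}. Thus
\begin{align*}
\#\{\text{left cells in }\mathbf{c}_{\lambda}^{\imath}\}=\sum_{\Gamma\subseteq\mathbf{c}_{\lambda}}\#\big\{\mu\in\Lambda^{\imath}(n,d)~\big|~W_{\mu}\text{-generators}\subseteq\mathcal{R}(\Gamma)\big\}.
\end{align*}
The crucial simplification is that, by the definition of $\Lambda^{\imath}(n,d)$, the reflection $s_{0}$ is excluded from every $W_{\mu}$, so $W_{\mu}$ is a type~$A$ parabolic recorded by a composition $\nu=(\nu_{1},\dots,\nu_{r})$ of $d$ into $r$ parts, and only $\mathcal{R}(\Gamma)\cap\{s_{1},\dots,s_{d-1}\}$ is relevant.

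Next I would translate the inner count into tableau combinatorics. As in the discussion preceding Conjecture~\ref{theor:d=d2=d000adadcnumber} (now in type~$C$), the left cells $\Gamma\subseteq\mathbf{c}_{\lambda}$ are counted by the standard domino tableaux of the special shape $\lambda$, their number being $\dim\mathrm{Sp}(\mathcal{O}_{\lambda})$ by \cite{Mc99}. I would set up a bijection $\Gamma\mapsto T_{\Gamma}$ with these standard domino tableaux and \emph{define} a descent set $\mathrm{Des}(T_{\Gamma}):=\{k:1\le k\le d-1,\ s_{k}\notin\mathcal{R}(\Gamma)\}$. With this definition the condition $W_{\mu}\text{-generators}\subseteq\mathcal{R}(\Gamma)$ is equivalent to $\mathrm{Des}(T_{\Gamma})\subseteq S(\nu)$, where $S(\nu)=\{\nu_{1},\nu_{1}+\nu_{2},\dots,\nu_{1}+\cdots+\nu_{r-1}\}$ is the set of partial sums, so the inner count becomes the number of compositions $\nu$ of $d$ into $r$ parts whose partial-sum set contains $\mathrm{Des}(T_{\Gamma})$.

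Finally I would assemble the identity by exchanging the order of summation,
\begin{align*}
\sum_{T}\#\{\nu:\mathrm{Des}(T)\subseteq S(\nu)\}=\sum_{\nu}\#\{T:\mathrm{Des}(T)\subseteq S(\nu)\},
\end{align*}
where $T$ ranges over standard domino tableaux of shape $\lambda$ and $\nu$ over compositions of $d$ into $r$ parts. By the descent characterization of the domino Kostka numbers, the inner sum on the right equals the number of semistandard domino tableaux of shape $\lambda$ and content $\nu$; summing over $\nu$ then yields the number of semistandard domino tableaux of shape $\lambda$ with all entries $\leq r$ (see \cite{St96,BK00} for this count), which is exactly the asserted value. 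The absence of a monomino and the bound $\leq r$, in contrast to the forced monomino and bound $\leq r+1$ of Conjecture~\ref{theor:d=d2=d000adadcnumber}, reflect precisely that here $2d$ is even, $n=2r$, and $s_{0}\notin W_{\mu}$.

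The hard part will be the compatibility asserted in the second and third steps: one must prove that the descent statistic $\mathrm{Des}(T_{\Gamma})$ extracted from the right descent set of a Kazhdan--Lusztig left cell in type~$C$ coincides with the combinatorial domino-descent statistic for which the domino Kostka numbers admit the standardization description. This is genuinely delicate because in type $C$ the cells are coarser than the Robinson--Schensted shape $PT_{\imath}$: distinct shapes are fused by the relation $\approx$ (equivalently, by Garfinkle's open-cycle moves), so a single left cell can contain elements of different $PT_{\imath}$-shapes, and the clean ``shape is a left-cell invariant'' picture of type~$A$ fails. Reconciling Garfinkle's domino-insertion description of cells with the Barbasch--Vogan description via $W_{C_d}\hookrightarrow\mathfrak{S}_{2d}$ used for $PT_{\imath}$ (Proposition~\ref{lem:d=d2=d000adadc-i}), while tracking the special reflection $s_{0}$ that controls vertical-versus-horizontal domino placement yet enters no $W_{\mu}$, together with verifying the realizability claim of the first step, is where the essential work lies.
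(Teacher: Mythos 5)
This statement is Conjecture \ref{theor:d=d2=d000adadcnumber-i} of the paper: the author gives no proof of it (nor of its companion, Conjecture \ref{theor:d=d2=d000adadcnumber}), so there is no argument of the paper to compare yours against. The only thing the paper supplies is the reduction implicit in the discussion preceding Conjecture \ref{theor:d=d2=d000adadcnumber}: via Proposition \ref{lem:d=d2=d000ad-i-i}, left cells of $S^{\imath}(n,d)$ should correspond to pairs $(\mu,\Gamma)$ with the generators of $W_{\mu}$ contained in $\mathcal{R}(\Gamma)$. Your proposal reproduces this reduction and then outlines how one would like to finish; but it does not close the conjecture, for two concrete reasons.

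First, already your opening identity $\#\{\text{left cells in }\mathbf{c}_{\lambda}^{\imath}\}=\sum_{\Gamma}\#\{\mu\}$ has an unproved half. One direction is clear: $w_{A}^{+}$ is the longest element of $W_{ro(A)}w_{A}^{+}W_{co(A)}$, so the generators of $W_{co(A)}$ lie in $\mathcal{R}(w_{A}^{+})$. The converse (your ``realizability'') requires that every admissible pair $(\mu,\Gamma)$ is realized, i.e.\ that $\Gamma$ contains an element $w'$ which is the \emph{longest} element of some double coset $W_{\lambda'}w'W_{\mu}$ with $\lambda'\in\Lambda^{\imath}(n,d)$; by Curtis's criterion \cite{Cur85} this means finding $w'\in\Gamma$ with $\{s_{1},\dots,s_{d-1}\}\setminus\mathcal{L}(w')$ of size at most $r-1$. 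In type $A$ this follows from Robinson--Schensted: within a left cell the insertion tableau runs over \emph{all} standard tableaux of the fixed shape, and one exhibits a filling with the required descent set. In type $C$ with equal parameters no such statement is available off the shelf: by Garfinkle's theorem left cells are parametrized by open-cycle classes of domino tableaux, not by individual tableaux, a single left cell contains elements of several $PT_{\imath}$-shapes (e.g.\ $s_{1}$ and $s_{1}s_{0}s_{1}$ in $W_{C_2}$, of shapes $(2,2)$ and $(3,1)$), and the possible sets $\mathcal{L}(w')$ for $w'\in\Gamma$ have no clean insertion-theoretic description. So ``follows as in Du's type $A$ argument'' is a gap, not a proof.

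Second --- and you say this yourself --- the tableau-theoretic steps are circular as written. You \emph{define} $\mathrm{Des}(T_{\Gamma})$ to be $\{k\ge 1: s_{k}\notin\mathcal{R}(\Gamma)\}$ and then invoke ``the descent characterization of the domino Kostka numbers'' for this statistic. The standardization identity $\#\{\text{SSDT of shape }\lambda,\text{ content }\nu\}=\#\{T:\mathrm{Des}(T)\subseteq S(\nu)\}$ holds for the \emph{combinatorial} descent set of standard domino tableaux; it yields nothing unless one proves that, under a suitable bijection between left cells in the two-sided cell of $\HH$ attached to $\lambda$ and standard domino tableaux of shape $\lambda$ (whose existence at the level of cardinalities is \cite{Mc99}), the cell-theoretic set above coincides with that combinatorial descent set --- including its invariance under Garfinkle's open-cycle moves, and a reconciliation of Garfinkle's picture with the Barbasch--Vogan parametrization through $W_{C_d}\hookrightarrow\mathfrak{S}_{2d}$ used in Proposition \ref{lem:d=d2=d000adadc-i}. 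That compatibility is precisely the mathematical substance of the conjecture, and your proposal defers it rather than establishes it. In short: the architecture is the natural one (and is consistent with the examples following Theorem \ref{theor:charac of cells in Schur algs-i}), but what you have is a credible research plan, not a proof.
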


\section{An approach to determining two-sided cells in $\tilde{S}^{\imath}(n, d)$}
In this section, inspired by \cite{B17} we consider the $\tilde{\imath}$-Schur algebra $\tilde{S}^{\imath}(n, d)$ attached to $\mathcal{H}_{C_d}^{1}$, where $\mathcal{H}_{C_d}^{1}$ is the specialization at $p = 1$ of the Iwahori--Hecke algebra $\mathcal{H}_{C_d}^{p}$ of type $C_d$ with unequal parameters $p$ and $q$. We shall give an approach to determining whether or not two canonical basis elements of $\tilde{S}^{\imath}(n, d)$ lie in the same two-sided cell.

\subsection{Preliminaries}
  \label{sec:module}

Fix $d\in \Z_{\geq 2}$. Recall that $\A=\Z[q,q^{-1}]$. Let $p$ be another indeterminate and set $\mathcal{B}=\Z[p,p^{-1},q,q^{-1}]$. Let $\mathcal{H}_{C_d}^{p}$ be the Iwahori--Hecke algebra of type $C_d$ over $\mathcal{B}$ (cf. \cite[\S3.1]{B17}). It is generated by $T_{0}, T_1,\ldots, T_{d-1}$ with the following relations:
\begin{align*}\label{Btype-Hecke-alg-rela}
&T_{i}T_{i+1}T_{i}= T_{i+1}T_{i}T_{i+1}\quad \mathrm{for}~1\leq i\leq d-2,\\
&T_{0}T_{1}T_{0}T_{1}= T_{1}T_{0}T_{1}T_{0},\qquad T_{i}T_{j}=T_{j}T_{i}\quad \mathrm{if}~  |i-j|\geq 2,\\
&(T_{0}-p)(T_{0}+p^{-1})=0,\qquad (T_{i}-q)(T_i+q^{-1})=0\quad \mathrm{for}~ 1\leq i\leq d-1.
\end{align*}


Let $w\in W_{C_d}$. If $s_{i_1}s_{i_2}\cdots s_{i_r}$ is a {\em reduced expression} of it, we set $T_w :=T_{i_1}T_{i_2}\cdots T_{i_r}$. It is well-known that $T_w$ is independent of the choice of the reduced expression of $w$ (cf. \cite[\S3.2]{Lu03}). Let $\mathcal{H}_{C_d}^{1}$ denote the Iwahori--Hecke algebra of type $C_d$ over $\A$ with the parameter $p=1$.

Let $\mathcal{H}_{D_d}$ be the Iwahori--Hecke algebra of type $D_d$ over $\A$ (cf. \cite[\S10.2]{ES18} and also \cite[\S3.2]{B17}). It is generated by $T_{0}, T_1,\ldots, T_{d-1}$ with the following relations:
\begin{align*}
&T_{i}T_{i+1}T_{i}= T_{i+1}T_{i}T_{i+1}\quad \mathrm{for}~1\leq i\leq d-2,\qquad T_{0}T_{2}T_{0}= T_{2}T_{0}T_{2},\\
&T_{i}T_{j}=T_{j}T_{i}\quad \mathrm{if}~  1\leq i, j\leq d-1~\mathrm{ with}~|i-j|\geq 2,\qquad T_{0}T_{k}=T_{k}T_{0}\quad \mathrm{for}~k\neq 2,\\
&(T_{i}-q)(T_i+q^{-1})=0 \quad \mathrm{for}~ 0\leq i\leq d-1.
\end{align*}

Fix $n=2r\in \mathbb{Z}_{\geq 2}$. Recall that in \S\ref{e2}, we have defined the set $\Lambda^{\imath}(n, d)$, the length function $\ell'$ on $W_{C_d}$, and the parabolic subgroup $W_{\lambda}$ of $W_{C_d}$ associated to $\lambda\in \Lambda^{\imath}(n, d)$. Note that each $W_{\lambda}$ is generated by some simple reflections $s_{i}$'s ($1\leq i\leq d-1$).


We define a function $\ell'_{\mathfrak{a}}$ on $W_{C_d}$ by letting $\ell'_{\mathfrak{a}}(w)$ be the total number of $s_{i}$'s ($1\leq i\leq d-1$) in a reduced expression of $w$; in particular, we have $\ell'_{\mathfrak{a}}(s_{0})=0$ and $\ell'_{\mathfrak{a}}(s_{i})=1$ for $1\leq i\leq d-1$. Then $\ell'_{\mathfrak{a}}$ is a {\em weight function} (cf. \cite[\S3.1]{Lu03} and \cite[Example 2.4.4(a)]{GJ11}). Thus, $\mathcal{H}_{C_d}^{1}$ can be regarded as the Iwahori--Hecke algebra over $\A$ associated to the weight function $\ell'_{\mathfrak{a}}$, and we can apply the results in \cite{Lu03} and \cite[\S2.4]{GJ11}.


Following \cite[\S3.1]{B17} we define the {\em $\tilde{\imath}$-Schur algebra} $\tilde{S}^{\imath}(n, d)$ associated to $\mathcal{H}_{C_d}^{1}$ over $\A$ by
\begin{align*}
\tilde{S}^{\imath}(n, d)=\End_{\mathcal{H}_{C_d}^{1}}(\bigoplus_{\lambda\in \Lambda^{\imath}(n, d)}x_{\lambda}\mathcal{H}_{C_d}^{1}),\quad \text{ where }x_{\lambda}=\sum\limits_{w\in W_{\lambda}}q^{\ell'_{\mathfrak{a}}(w)}T_{w}.
\end{align*}

\begin{rem}\label{more-general-gene}
In \cite[\S2.2]{BWW18} and also \cite[\S6.1]{LL21}, the authors have given a more general construction of $\tilde{S}^{\imath}(n, d)$, which is associated to $\mathcal{H}_{C_d}^{p}$.
\end{rem}

For $\lambda, \mu\in \Lambda^{\imath}(n, d)$ and $g\in \D_{\lambda\mu}$, we define $\tilde{\phi}_{\lambda, \mu}^{g}\in \tilde{S}^{\imath}(n, d)$ by
\begin{align*}
\tilde{\phi}_{\lambda, \mu}^{g}(x_{\nu}h)=\delta_{\mu, \nu}T_{\lambda\mu}^{g} h\quad \mathrm{for~}h\in \mathcal{H}_{C_d}^{1},\text{ where }T_{\lambda\mu}^{g}=\sum_{w\in W_{\lambda}gW_{\mu}} q^{\ell'_{\mathfrak{a}}(w)}T_{w}.
\end{align*}
Then the set $\{\tilde{\phi}_{\lambda, \mu}^{g}\:|\:\lambda, \mu\in \Lambda^{\imath}(n, d)\mathrm{~and~}g\in \D_{\lambda\mu}\}$ forms an $\A$-basis of $\tilde{S}^{\imath}(n, d)$ (see \cite[\S6.1]{LL21}).

By \cite[Theorem 5.2]{Lu03}, the Kazhdan--Lusztig basis $\{\mathcal{C}'_{w}\:|\:w\in W_{C_d}\}$ of $\mathcal{H}_{C_d}^{1}$ can be defined. For $\lambda\in \Lambda^{\imath}(n, d)$, let $w^{\lambda}_{\circ}$ be the longest element in $W_{\lambda}$. By \cite[Proposition 1.17(ii)]{Xi94}, we have $\mathcal{C}'_{w^{\lambda}_{\circ}}=q^{-\ell'_{\mathfrak{a}}(w^{\lambda}_{\circ})}x_{\lambda}$. Let $<$ denote the Bruhat ordering on $W_{C_d}$. Since $\ell'_{\mathfrak{a}}(s_{i})=1$ for $1\leq i\leq d-1$, by \cite[Theorem 6.6(b) and Corollary 6.7(b)]{Lu03} we obtain the following lemma.
\begin{lem}\label{e3}
Given $1\leq i\leq d-1$ and $w\in W_{C_d}$, we have $T_{i}\mathcal{C}'_{w}=q \mathcal{C}'_{w}$ whenever $s_{i}w< w$; $\mathcal{C}'_{w}T_{i}=q \mathcal{C}'_{w}$ whenever $ws_{i}< w$.
\end{lem}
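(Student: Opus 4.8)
The plan is to reduce the two assertions to Lusztig's product formulas for the Kazhdan--Lusztig basis under the weight function $\ell'_{\mathfrak{a}}$, by rewriting the generator $T_{i}$ in terms of the basis element $\mathcal{C}'_{s_{i}}$. The key feature making this work is that every $s_{i}$ with $1\leq i\leq d-1$ has weight $\ell'_{\mathfrak{a}}(s_{i})=1$, so these generators behave like ordinary positive-weight Coxeter generators; the generator $s_{0}$, which carries weight $0$, plays no role since it is excluded from the statement.

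First I would compute $\mathcal{C}'_{s_{i}}$ explicitly. Since the only element below $s_{i}$ in the Bruhat order is $e$, bar-invariance forces $\mathcal{C}'_{s_{i}}=T_{i}+cT_{e}$ for a unique $c\in q^{-1}\mathbb{Z}[q^{-1}]$; using $\overline{T_{i}}=T_{i}^{-1}=T_{i}-(q-q^{-1})$ (which follows from $(T_{i}-q)(T_{i}+q^{-1})=0$) one solves $c-\bar{c}=q^{-1}-q$, giving $c=q^{-1}$ and hence
\begin{align*}
\mathcal{C}'_{s_{i}}=T_{i}+q^{-1},\qquad\text{equivalently}\qquad T_{i}=\mathcal{C}'_{s_{i}}-q^{-1}.
\end{align*}
(This also matches $\mathcal{C}'_{w^{\lambda}_{\circ}}=q^{-\ell'_{\mathfrak{a}}(w^{\lambda}_{\circ})}x_{\lambda}$ of \cite[Proposition 1.17(ii)]{Xi94} in the rank-one case.) Next I would invoke \cite[Theorem 6.6(b)]{Lu03}: for the weight function $\ell'_{\mathfrak{a}}$ and any $w$ with $s_{i}w<w$ one has $\mathcal{C}'_{s_{i}}\mathcal{C}'_{w}=(q^{\ell'_{\mathfrak{a}}(s_{i})}+q^{-\ell'_{\mathfrak{a}}(s_{i})})\mathcal{C}'_{w}=(q+q^{-1})\mathcal{C}'_{w}$. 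Substituting the expression for $T_{i}$ then gives
\begin{align*}
T_{i}\mathcal{C}'_{w}=\mathcal{C}'_{s_{i}}\mathcal{C}'_{w}-q^{-1}\mathcal{C}'_{w}=(q+q^{-1})\mathcal{C}'_{w}-q^{-1}\mathcal{C}'_{w}=q\mathcal{C}'_{w},
\end{align*}
which is the first claim and is exactly \cite[Corollary 6.7(b)]{Lu03}. For the second claim I would argue symmetrically, applying either the right-handed analog of Theorem 6.6(b) or the anti-automorphism $T_{w}\mapsto T_{w^{-1}}$ (which sends $\mathcal{C}'_{w}\mapsto \mathcal{C}'_{w^{-1}}$ and swaps left and right descents) to get $\mathcal{C}'_{w}\mathcal{C}'_{s_{i}}=(q+q^{-1})\mathcal{C}'_{w}$ whenever $ws_{i}<w$, whence $\mathcal{C}'_{w}T_{i}=q\mathcal{C}'_{w}$.

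The only delicate point, and hence the main obstacle, is the bookkeeping of normalizations: I must confirm that the canonical basis recorded after \cite[Theorem 5.2]{Lu03} (bar-invariant, lower-triangular with coefficients in $q^{-1}\mathbb{Z}[q^{-1}]$) is normalized so that Lusztig's formula takes the form $(q^{\ell'_{\mathfrak{a}}(s)}+q^{-\ell'_{\mathfrak{a}}(s)})\mathcal{C}'_{w}$, with the weight $\ell'_{\mathfrak{a}}(s_{i})=1$ inserted in the correct place. Once the identity $\mathcal{C}'_{s_{i}}=T_{i}+q^{-1}$ is pinned down this way, both halves of the lemma are immediate substitutions, so no further computation is needed.
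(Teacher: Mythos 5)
Your proof is correct and takes essentially the same route as the paper: the paper's entire argument is the citation of \cite[Theorem 6.6(b) and Corollary 6.7(b)]{Lu03} combined with the observation that $\ell'_{\mathfrak{a}}(s_{i})=1$ for $1\leq i\leq d-1$, which is precisely the substitution you perform. Your explicit normalization check $\mathcal{C}'_{s_{i}}=T_{i}+q^{-1}$ and the derivation of the eigenvalue $q$ from the product formula $(q+q^{-1})\mathcal{C}'_{w}$ merely unpack the cited Corollary 6.7(b), so the two arguments coincide in substance.
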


For $\lambda, \mu\in \Lambda^{\imath}(n, d)$, let $\mathcal{H}_{\lambda\mu}$ be the $\A$-submodule of $\mathcal{H}_{C_d}^{1}$ with a basis $\{T_{\lambda\mu}^{g}\}_{g\in \D_{\lambda\mu}}$. By Lemma \ref{e3}, we have $T_{w}x_{\lambda}=q^{\ell'_{\mathfrak{a}}(w)} x_{\lambda}$ for any $w\in W_{\lambda}$ (cf. also \cite[Lemma 3.1.1]{LL21}). By this and \cite[(4.1.4)]{LL21} (see \cite[(1.9)]{Cur85} or \cite[Lemma 7.33, Proposition 7.34]{DDPW08} for the equal parameter case), we have the following results.
\begin{lem}\label{f5}
We have
\begin{align*}
x_{\lambda}\mathcal{H}_{C_d}^{1}&=\big\{h\in \mathcal{H}_{C_d}^{1}\:\big|\:T_{w}h=q^{\ell'_{\mathfrak{a}}(w)}h ~\text{ for all } w\in W_{\lambda}\big\},\\
\mathcal{H}_{C_d}^{1}x_{\mu}&=\big\{h\in \mathcal{H}_{C_d}^{1}\:\big|\:hT_{w'}=q^{\ell'_{\mathfrak{a}}(w')}h ~\text{ for all } w'\in W_{\mu}\big\},\\
\mathcal{H}_{\lambda\mu}&=x_{\lambda}\mathcal{H}_{C_d}^{1}\cap \mathcal{H}_{C_d}^{1}x_{\mu}.
\end{align*}
\end{lem}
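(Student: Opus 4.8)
The plan is to establish all three identities simultaneously by a coordinate analysis of an arbitrary element $h=\sum_{z\in W_{C_d}}c_z T_z$ written in the standard basis, following \cite[(4.1.4)]{LL21} and the equal-parameter arguments of \cite[(1.9)]{Cur85} and \cite[Lemma 7.33, Proposition 7.34]{DDPW08}. The one structural feature I would exploit throughout is that every simple reflection occurring in $W_\lambda$ or $W_\mu$ is an $s_i$ with $1\leq i\leq d-1$, so that $\ell'_{\mathfrak{a}}(s_i)=1$ and $T_{s_i}^{2}=1+(q-q^{-1})T_{s_i}$; the reflection $s_0$, which carries the unequal parameter, never belongs to these parabolic subgroups and therefore plays no role.

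First I would dispose of the inclusions ``$\subseteq$'' in the first two identities, which are immediate from the relation $T_w x_\lambda=q^{\ell'_{\mathfrak{a}}(w)}x_\lambda$ recorded before the lemma: if $h=x_\lambda h'$ and $w\in W_\lambda$, then $T_w h=(T_w x_\lambda)h'=q^{\ell'_{\mathfrak{a}}(w)}h$, and symmetrically on the right. For the reverse inclusion in the first identity, assume $T_w h=q^{\ell'_{\mathfrak{a}}(w)}h$ for all $w\in W_\lambda$; it suffices to treat a generator $s_i\in W_\lambda$. Expanding $T_{s_i}h$ via $T_{s_i}T_z=T_{s_i z}$ for $s_i z>z$ and $T_{s_i}T_z=T_{s_i z}+(q-q^{-1})T_z$ for $s_i z<z$, and comparing the coefficient of each $T_w$ on the two sides of $T_{s_i}h=q h$, I expect to obtain the single recursion $c_{s_i z}=q\,c_z$ whenever $s_i z>z$. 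Iterating along a reduced word shows $c_{uz}=q^{\ell'_{\mathfrak{a}}(u)}c_z$ for $u\in W_\lambda$ and $z$ minimal in $W_\lambda z$; grouping the standard basis by the left cosets $W_{C_d}=\bigsqcup_z W_\lambda z$ then rewrites $h$ as $x_\lambda(\sum_z c_z T_z)\in x_\lambda\mathcal{H}_{C_d}^{1}$. The second identity follows by the mirror-image argument on the right.

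For the third identity I would argue both inclusions from what precedes. To see $\mathcal{H}_{\lambda\mu}\subseteq x_\lambda\mathcal{H}_{C_d}^{1}\cap\mathcal{H}_{C_d}^{1}x_\mu$, it is enough to verify $T_{s_i}T_{\lambda\mu}^{g}=q\,T_{\lambda\mu}^{g}$ for each generator $s_i\in W_\lambda$ and the right analog for $W_\mu$; this is a short computation that pairs $z$ with $s_i z$ inside the $s_i$-stable set $W_\lambda g W_\mu$ and uses $\ell'_{\mathfrak{a}}(s_i z)=\ell'_{\mathfrak{a}}(z)+1$, after which the first two identities place $T_{\lambda\mu}^{g}$ in the intersection. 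For the reverse inclusion, given $h$ in the intersection, the two recursions say precisely that the normalized coefficients $\tilde c_z:=q^{-\ell'_{\mathfrak{a}}(z)}c_z$ are invariant under left multiplication by $W_\lambda$ and under right multiplication by $W_\mu$; hence $\tilde c_z$ is constant on each double coset $W_\lambda g W_\mu$, and regrouping gives $h=\sum_{g\in\D_{\lambda\mu}}\tilde c_g\,T_{\lambda\mu}^{g}\in\mathcal{H}_{\lambda\mu}$.

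The part that requires the most care, and where I would slow down, is the coefficient bookkeeping: one must check that the power of $q$ introduced by the recursion $c_{s_i z}=q\,c_z$ exactly cancels the length increment $\ell'_{\mathfrak{a}}(s_i z)=\ell'_{\mathfrak{a}}(z)+1$, so that $\tilde c_z$ is truly invariant and not merely invariant up to a power of $q$. This cancellation is what makes $\tilde c_z$ constant on double cosets and hence yields the decomposition of $h$; it works only because $\ell'_{\mathfrak{a}}(s_i)=1$ for every generator of $W_\lambda$ and $W_\mu$. Once this is checked, the unequal parameter at $s_0$ never intervenes and the equal-parameter reasoning of \cite{Cur85, DDPW08} transfers without change.
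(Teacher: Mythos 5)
Your proof is correct and takes essentially the same route as the paper: the paper establishes this lemma by invoking $T_{w}x_{\lambda}=q^{\ell'_{\mathfrak{a}}(w)}x_{\lambda}$ and then citing the equal-parameter arguments of \cite[(1.9)]{Cur85} and \cite[Lemma 7.33, Proposition 7.34]{DDPW08} (via \cite[(4.1.4)]{LL21}), whose transfer to $\mathcal{H}_{C_d}^{1}$ rests precisely on your key observation that every generator of $W_{\lambda}$ and $W_{\mu}$ is some $s_{i}$ with $1\leq i\leq d-1$ carrying parameter $q$, so the unequal parameter at $s_{0}$ never intervenes. Your coefficient recursion $c_{s_{i}z}=q\,c_{z}$ and the constancy of $q^{-\ell'_{\mathfrak{a}}(z)}c_{z}$ on double cosets is exactly the standard argument contained in those references, so you have simply written out in full what the paper leaves to citation.
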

For $\lambda, \mu\in \Lambda^{\imath}(n, d)$ and $g\in \D_{\lambda\mu}$, let $g_{\lambda\mu}^{+}$ be the longest element in $W_{\lambda}gW_{\mu}$. By Lemmas \ref{e3} and \ref{f5}, similar to \cite[(1.10)]{Cur85} (cf. also \cite[Corollary 7.35]{DDPW08} and \cite[Lemma 3.8]{Du92}), we can prove the following results.
\begin{lem}\label{e4}
For $\lambda, \mu\in \Lambda^{\imath}(n, d)$, the set $\{\mathcal{C}'_{w}\:|\:w\in \D_{\lambda\mu}^{+}\}$ is an $\A$-basis of $\mathcal{H}_{\lambda\mu}$. Moreover, we have
\begin{equation}\label{g3}
\mathcal{C}'_{g_{\lambda\mu}^{+}}=q^{-\ell'_{\mathfrak{a}}(g_{\lambda\mu}^{+})}T_{\lambda\mu}^{g}+\sum\limits_{\substack{y\in \D_{\lambda\mu}\\y_{\lambda\mu}^{+} <g_{\lambda\mu}^{+}}} p_{y_{\lambda\mu}^{+}, g_{\lambda\mu}^{+}} \cdot q^{-\ell'_{\mathfrak{a}}(y_{\lambda\mu}^{+})}T_{\lambda\mu}^{y},
\end{equation}
where $p_{y_{\lambda\mu}^{+}, g_{\lambda\mu}^{+}}\in q^{-1}\Z[q^{-1}]$ for any $y_{\lambda\mu}^{+} <g_{\lambda\mu}^{+}$.
\end{lem}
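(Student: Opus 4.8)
The plan is to follow the classical template of Curtis \cite[(1.10)]{Cur85} (see also \cite[Corollary 7.35]{DDPW08} and \cite[Lemma 3.8]{Du92}), adapted to the present unequal-parameter setting in which $\ell'_{\mathfrak{a}}(s_0)=0$. The argument splits into three steps: membership in $\mathcal{H}_{\lambda\mu}$, computation of the expansion coefficients, and deduction of the basis property.

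First I would show that each $\mathcal{C}'_{g_{\lambda\mu}^{+}}$ lies in $\mathcal{H}_{\lambda\mu}$. The crucial observation is that, by the definition of $\Lambda^{\imath}(n,d)$, every parabolic subgroup $W_{\lambda}$ is generated by simple reflections $s_i$ with $1\le i\le d-1$ only, so $s_0$ never occurs. Since $g_{\lambda\mu}^{+}$ is the longest element of the double coset $W_{\lambda}gW_{\mu}$, we have $s_i g_{\lambda\mu}^{+}<g_{\lambda\mu}^{+}$ for every generator $s_i$ of $W_{\lambda}$, whence $T_i\mathcal{C}'_{g_{\lambda\mu}^{+}}=q\mathcal{C}'_{g_{\lambda\mu}^{+}}$ by Lemma \ref{e3}. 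Iterating along a reduced expression of an arbitrary $w\in W_{\lambda}$ (all of whose letters are such $s_i$) gives $T_w\mathcal{C}'_{g_{\lambda\mu}^{+}}=q^{\ell'_{\mathfrak{a}}(w)}\mathcal{C}'_{g_{\lambda\mu}^{+}}$, and the symmetric computation on the right handles $W_{\mu}$. By the characterization in Lemma \ref{f5}, this places $\mathcal{C}'_{g_{\lambda\mu}^{+}}$ in $x_{\lambda}\mathcal{H}_{C_d}^{1}\cap \mathcal{H}_{C_d}^{1}x_{\mu}=\mathcal{H}_{\lambda\mu}$.

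Next I would pin down the coefficients by comparing with the Kazhdan--Lusztig expansion $\mathcal{C}'_{g_{\lambda\mu}^{+}}=\sum_{x\le g_{\lambda\mu}^{+}}p_{x, g_{\lambda\mu}^{+}}T_x$ of \cite[Theorem 5.2]{Lu03}, where $p_{g_{\lambda\mu}^{+}, g_{\lambda\mu}^{+}}=1$ and $p_{x, g_{\lambda\mu}^{+}}\in q^{-1}\Z[q^{-1}]$ for $x<g_{\lambda\mu}^{+}$. Writing $\mathcal{C}'_{g_{\lambda\mu}^{+}}=\sum_{y\in \D_{\lambda\mu}}c_y T_{\lambda\mu}^{y}$ with $c_y\in \A$, the key point is that for each $y$ the longest element $y_{\lambda\mu}^{+}$ of the coset $W_{\lambda}yW_{\mu}$ occurs in precisely one basis vector $T_{\lambda\mu}^{y}=\sum_{w\in W_{\lambda}yW_{\mu}}q^{\ell'_{\mathfrak{a}}(w)}T_w$, and there with coefficient $q^{\ell'_{\mathfrak{a}}(y_{\lambda\mu}^{+})}$. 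Comparing the coefficient of $T_{y_{\lambda\mu}^{+}}$ on both sides yields $c_y\, q^{\ell'_{\mathfrak{a}}(y_{\lambda\mu}^{+})}=p_{y_{\lambda\mu}^{+}, g_{\lambda\mu}^{+}}$, that is $c_y=q^{-\ell'_{\mathfrak{a}}(y_{\lambda\mu}^{+})}\,p_{y_{\lambda\mu}^{+}, g_{\lambda\mu}^{+}}$. In particular $c_g=q^{-\ell'_{\mathfrak{a}}(g_{\lambda\mu}^{+})}$, while $c_y=0$ unless $y_{\lambda\mu}^{+}\le g_{\lambda\mu}^{+}$ and $c_y=p_{y_{\lambda\mu}^{+}, g_{\lambda\mu}^{+}}\cdot q^{-\ell'_{\mathfrak{a}}(y_{\lambda\mu}^{+})}$ with $p_{y_{\lambda\mu}^{+}, g_{\lambda\mu}^{+}}\in q^{-1}\Z[q^{-1}]$ whenever $y_{\lambda\mu}^{+}<g_{\lambda\mu}^{+}$; this is exactly formula \eqref{g3}.

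Finally, the basis claim follows at once, since the bijection $g\mapsto g_{\lambda\mu}^{+}$ identifies $\D_{\lambda\mu}$ with $\D_{\lambda\mu}^{+}$. Formula \eqref{g3} exhibits the transition matrix from $\{\mathcal{C}'_{g_{\lambda\mu}^{+}}\}_{g\in\D_{\lambda\mu}}$ to the basis $\{T_{\lambda\mu}^{g}\}_{g\in\D_{\lambda\mu}}$ of $\mathcal{H}_{\lambda\mu}$ as triangular with respect to the Bruhat order on the longest coset representatives, with diagonal entries $q^{-\ell'_{\mathfrak{a}}(g_{\lambda\mu}^{+})}$, which are units in $\A$. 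The matrix is therefore invertible over $\A$, so the $\mathcal{C}'_{g_{\lambda\mu}^{+}}$ span $\mathcal{H}_{\lambda\mu}$ and are linearly independent, hence form an $\A$-basis. I expect the only genuinely delicate point to be the first step: one must check that Lemma \ref{e3} alone suffices, i.e.\ that no generator $s_0$ — for which $\ell'_{\mathfrak{a}}(s_0)=0$ and that lemma is unavailable — is ever required, which is precisely guaranteed by the fact that $s_0\notin W_{\lambda}, W_{\mu}$ in this $\imath$-setting.
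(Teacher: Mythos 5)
Your proof is correct and follows essentially the same route the paper indicates: the paper's proof is exactly the classical Curtis--Du triangularity argument (cf.\ \cite[(1.10)]{Cur85}, \cite[Corollary 7.35]{DDPW08}, \cite[Lemma 3.8]{Du92}) carried out via Lemmas \ref{e3} and \ref{f5}, which is what you do — membership of $\mathcal{C}'_{g_{\lambda\mu}^{+}}$ in $\mathcal{H}_{\lambda\mu}$ via the eigenvalue property, coefficient comparison against the Kazhdan--Lusztig expansion from \cite[Theorem 5.2]{Lu03}, and invertibility of the triangular transition matrix. You also correctly isolate the one point specific to this setting, namely that $s_0\notin W_\lambda, W_\mu$ so that Lemma \ref{e3} (which only covers $s_i$ with $1\le i\le d-1$) suffices.
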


We can define a bar involution $\bar{\cdot}$ on $\tilde{S}^{\imath}(n, d)$ similar to \eqref{d2}. We set $[\tilde{\phi}_{\lambda, \mu}^{g}]=q^{-\ell'_{\mathfrak{a}}(g_{\lambda\mu}^{+})+\ell'_{\mathfrak{a}}(w^{\mu}_{\circ})}\tilde{\phi}_{\lambda, \mu}^{g}$. By Lemma \ref{e4}, we have $\mathcal{C}'_{g^+_{\lambda\mu}}\in x_{\lambda}\mathcal{H}_{C_d}^{1}$. If we define
\begin{equation*}
\{\tilde{\phi}_{\lambda\mu}^g\} \in  \Hom_{\mathcal{H}_{C_d}^{1}}(x_{\mu}\mathcal{H}_{C_d}^{1}, x_{\lambda}\mathcal{H}_{C_d}^{1})\subset \tilde{S}^{\imath}(n, d)
\end{equation*}
by requiring
\begin{equation}\label{f1}
\{\tilde{\phi}_{\lambda\mu}^g\} (\mathcal{C}'_{w^{\mu}_\circ})=\mathcal{C}'_{g^+_{\lambda\mu}},
\end{equation}
then we have $\overline{\{\tilde{\phi}_{\lambda\mu}^g\}}=\{\tilde{\phi}_{\lambda\mu}^g\}$ (cf. \cite[Proposition 3.2(3)]{Du92}) and by \eqref{g3} (cf. \cite[(2.c)]{Du92}),
\begin{align*}
\{\tilde{\phi}_{\lambda\mu}^g\}=[\tilde{\phi}_{\lambda\mu}^g]+\sum\limits_{\substack{y\in \D_{\lambda\mu}\\y_{\lambda\mu}^{+} <g_{\lambda\mu}^{+}}} p_{y_{\lambda\mu}^{+}, g_{\lambda\mu}^{+}} [\tilde{\phi}_{\lambda\mu}^y].
\end{align*}
Thus, the set $\big\{\{\tilde{\phi}_{\lambda\mu}^g\}\:\big|\:\lambda, \mu\in \Lambda^{\imath}(n, d)\mathrm{~and~}g\in \D_{\lambda\mu}\big\}$ is an $\A$-basis of $\tilde{S}^{\imath}(n, d)$, which is called the canonical basis (cf. \cite[Theorem 6.2.3]{LL21}).

Recall that in \S\ref{e2}, we have a natural bijection between $\Xi^{\imath}$ or $\widetilde{\Xi}^{\imath}$ and $\Pi_{n, d}^{\imath}$, and for $A\in \Pi_{n, d}^{\imath}$, we denote by $(ro(A), w_{A}, co(A))$ and $(ro(A), w_{A}^{+}, co(A))$ the corresponding element in $\Xi^{\imath}$ and $\widetilde{\Xi}^{\imath}$, respectively. In the following, we set $\{A\}^{\heartsuit}=\{\tilde{\phi}_{\lambda\mu}^{w_{A}}\}$ if $A=(ro(A), w_{A}, co(A))=(\lambda, w_{A}, \mu)$.

Let $\tilde{h}_{x, y}^{z}$ (resp. $\tilde{g}_{A, B}^{C}$) denote the structure constants of $\mathcal{H}_{C_d}^{1}$ (resp. $\tilde{S}^{\imath}(n, d)$) with respect to the basis $\{\mathcal{C}'_{w}\:|\:w\in W_{C_d}\}$ (resp. $\{\{A\}^{\heartsuit}\:|\:A\in \Pi_{n, d}^{\imath}\}$), that is,
\[
\mathcal{C}'_{x}\cdot\mathcal{C}'_{y}=\sum_{z\in W_{C_d}}\tilde{h}_{x, y}^{z}\mathcal{C}'_{z},\qquad \{A\}^{\heartsuit}\cdot \{B\}^{\heartsuit}=\sum_{C\in \Pi_{n, d}^{\imath}}\tilde{g}_{A, B}^{C}\{C\}^{\heartsuit}.
\]

Similar to \cite[Lemma 5.1]{CLW20} and \cite[Proposition 3.4]{Du92}, using \eqref{f1} and Lemma \ref{e3} we can prove the following results.
\begin{lem}
\label{f2}
For any $A=(\lambda, w_A, \mu), B=(\mu, w_B, \nu), C=(\lambda, w_C, \nu)\in \Pi_{n, d}^{\imath}$, we have
\[
\pi(J_{\mu}) \cdot\tilde{g}_{A, B}^{C}=\tilde{h}_{w_{A}^{+}, w_{B}^{+}}^{w_{C}^{+}}, \text{ where } \pi(J_{\mu})=q^{-\ell'_{\mathfrak{a}}(w^{\mu}_{\circ})}\sum\limits_{w\in W_{\mu}}q^{2\ell'_{\mathfrak{a}}(w)}.
\]
\end{lem}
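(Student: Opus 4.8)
The plan is to evaluate both sides of the asserted identity as endomorphisms on the distinguished generator $\mathcal{C}'_{w^{\nu}_{\circ}}=q^{-\ell'_{\mathfrak{a}}(w^{\nu}_{\circ})}x_{\nu}$ of the summand $x_{\nu}\mathcal{H}_{C_d}^{1}$, thereby transporting the whole computation into $\mathcal{H}_{C_d}^{1}$. Since multiplication in $\tilde{S}^{\imath}(n,d)$ is composition of endomorphisms and $\{B\}^{\heartsuit}$ maps the $\nu$-summand to the $\mu$-summand while $\{A\}^{\heartsuit}$ maps the $\mu$-summand to the $\lambda$-summand, applying $\{A\}^{\heartsuit}\{B\}^{\heartsuit}$ to $\mathcal{C}'_{w^{\nu}_{\circ}}$ and using the defining property \eqref{f1} twice yields $\{A\}^{\heartsuit}(\mathcal{C}'_{w_{B}^{+}})$ on one side and $\sum_{C}\tilde{g}_{A,B}^{C}\mathcal{C}'_{w_{C}^{+}}$ on the other (the relevant $C$ all satisfy $ro(C)=\lambda$, $co(C)=\nu$). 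Thus it suffices to identify $\{A\}^{\heartsuit}(\mathcal{C}'_{w_{B}^{+}})$ explicitly and then compare coefficients of the $\mathcal{C}'_{w_{C}^{+}}$.

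The key device is to realize the right $\mathcal{H}_{C_d}^{1}$-module homomorphism $\{A\}^{\heartsuit}\colon x_{\mu}\mathcal{H}_{C_d}^{1}\to x_{\lambda}\mathcal{H}_{C_d}^{1}$ as a left multiplication. Because $\mathcal{C}'_{w_{A}^{+}}\in\mathcal{H}_{\lambda\mu}\subseteq\mathcal{H}_{C_d}^{1}x_{\mu}$ by Lemmas \ref{e4} and \ref{f5}, I may choose $\eta_{A}\in\mathcal{H}_{C_d}^{1}$ with $\eta_{A}x_{\mu}=q^{\ell'_{\mathfrak{a}}(w^{\mu}_{\circ})}\mathcal{C}'_{w_{A}^{+}}$. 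Left multiplication by $\eta_{A}$ is right $\mathcal{H}_{C_d}^{1}$-linear, it sends the cyclic generator $x_{\mu}$ to $q^{\ell'_{\mathfrak{a}}(w^{\mu}_{\circ})}\mathcal{C}'_{w_{A}^{+}}=\{A\}^{\heartsuit}(x_{\mu})$, and it carries $x_{\mu}\mathcal{H}_{C_d}^{1}$ into $x_{\lambda}\mathcal{H}_{C_d}^{1}$ since $\mathcal{C}'_{w_{A}^{+}}\in x_{\lambda}\mathcal{H}_{C_d}^{1}$; as $x_{\mu}\mathcal{H}_{C_d}^{1}$ is generated by $x_{\mu}$, this map coincides with $\{A\}^{\heartsuit}$. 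In particular $\{A\}^{\heartsuit}(\mathcal{C}'_{w_{B}^{+}})=\eta_{A}\,\mathcal{C}'_{w_{B}^{+}}$.

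The second ingredient is an eigenvalue computation from Lemma \ref{e3}. As $w_{B}^{+}$ is the longest element of $W_{\mu}w_{B}W_{\nu}$, every generator $s_{i}$ of $W_{\mu}$ satisfies $s_{i}w_{B}^{+}<w_{B}^{+}$, so iterating Lemma \ref{e3} along a reduced word gives $T_{w}\mathcal{C}'_{w_{B}^{+}}=q^{\ell'_{\mathfrak{a}}(w)}\mathcal{C}'_{w_{B}^{+}}$ for every $w\in W_{\mu}$, whence $x_{\mu}\mathcal{C}'_{w_{B}^{+}}=\big(\sum_{w\in W_{\mu}}q^{2\ell'_{\mathfrak{a}}(w)}\big)\mathcal{C}'_{w_{B}^{+}}$. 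Combining the realization with this relation, the definition of $\pi(J_{\mu})$, and the identity $\eta_{A}x_{\mu}=q^{\ell'_{\mathfrak{a}}(w^{\mu}_{\circ})}\mathcal{C}'_{w_{A}^{+}}$ together with associativity, I obtain
\[
\pi(J_{\mu})\,\{A\}^{\heartsuit}(\mathcal{C}'_{w_{B}^{+}})=q^{-\ell'_{\mathfrak{a}}(w^{\mu}_{\circ})}\,\eta_{A}\,x_{\mu}\mathcal{C}'_{w_{B}^{+}}=q^{-\ell'_{\mathfrak{a}}(w^{\mu}_{\circ})}(\eta_{A}x_{\mu})\mathcal{C}'_{w_{B}^{+}}=\mathcal{C}'_{w_{A}^{+}}\mathcal{C}'_{w_{B}^{+}}.
\]

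To conclude, I record that $\mathcal{C}'_{w_{A}^{+}}\mathcal{C}'_{w_{B}^{+}}$ lies in $\mathcal{H}_{\lambda\nu}=x_{\lambda}\mathcal{H}_{C_d}^{1}\cap\mathcal{H}_{C_d}^{1}x_{\nu}$: the left factor is in $x_{\lambda}\mathcal{H}_{C_d}^{1}$ and the right factor in $\mathcal{H}_{C_d}^{1}x_{\nu}$ by Lemma \ref{e4}, and the eigenvalue characterizations of Lemma \ref{f5} are preserved under the respective one-sided multiplications. By Lemma \ref{e4} the family $\{\mathcal{C}'_{w_{C}^{+}}\mid C=(\lambda,\,\cdot\,,\nu)\in\Pi_{n,d}^{\imath}\}$ is an $\A$-basis of $\mathcal{H}_{\lambda\nu}$, so in $\mathcal{C}'_{w_{A}^{+}}\mathcal{C}'_{w_{B}^{+}}=\sum_{z}\tilde{h}_{w_{A}^{+},w_{B}^{+}}^{z}\mathcal{C}'_{z}$ only the terms with $z=w_{C}^{+}$ survive; comparing the coefficient of $\mathcal{C}'_{w_{C}^{+}}$ in $\pi(J_{\mu})\sum_{C}\tilde{g}_{A,B}^{C}\mathcal{C}'_{w_{C}^{+}}=\mathcal{C}'_{w_{A}^{+}}\mathcal{C}'_{w_{B}^{+}}$ then gives $\pi(J_{\mu})\tilde{g}_{A,B}^{C}=\tilde{h}_{w_{A}^{+},w_{B}^{+}}^{w_{C}^{+}}$. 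I expect the second paragraph to be the main obstacle: one must check carefully that $\{A\}^{\heartsuit}$ genuinely is left multiplication, i.e. that the $\eta_{A}$-action is independent of the choice of $\eta_{A}$ on $x_{\mu}\mathcal{H}_{C_d}^{1}$ and that it lands in $x_{\lambda}\mathcal{H}_{C_d}^{1}$; once this realization is secured, the eigenvalue bookkeeping and the final coefficient comparison are routine.
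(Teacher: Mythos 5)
Your proposal is correct and takes essentially the same approach as the paper: the paper proves this lemma by appealing to the standard argument of \cite[Proposition 3.4]{Du92} and \cite[Lemma 5.1]{CLW20}, which rests on exactly the two ingredients you use, namely the defining property \eqref{f1} and the eigenvalue property of Lemma \ref{e3}, with the factor $\pi(J_{\mu})$ arising from $x_{\mu}$ acting on a Kazhdan--Lusztig basis element. Your device of realizing $\{A\}^{\heartsuit}$ as left multiplication by $\eta_{A}$ is just the mirror image of the usual factorization $\mathcal{C}'_{w_{B}^{+}}=x_{\mu}h'$ combined with right $\mathcal{H}_{C_d}^{1}$-linearity, so the substance of the computation is identical.
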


By applying the arguments in \cite[\S2.4.8]{GJ11} to $\mathcal{H}_{C_d}^{1}$, we see that
\begin{align}\label{f4}
\tilde{h}_{x, y}^{z}\in \mathbb{N}[q, q^{-1}]\text{ for all }x, y, z\in W_{C_d},
\end{align}
and (P1)--(P15) in \cite[Conj.~ 14.2]{Lu03} hold for $\mathcal{H}_{C_d}^{1}$. These imply that we can apply the results in \cite[\S18]{Lu03}; in particular, we can define the asymptotic algebra for $\mathcal{H}_{C_d}^{1}$ and establish an analog of \cite[Proposition 18.4]{Lu03} for it. We shall cite the results in loc. cit. directly.

Recall that $S=\{s_{0}, s_{1}, \ldots, s_{d-1}\}$. We set $\tilde{S}=\{s_{1}, \ldots, s_{d-1}\}$. Recall that $\ell'_{\mathfrak{a}}(t)=1$ for each $t\in \tilde{S}$. For $w\in W_{C_d}$, we set $\mathcal{L}(w)=\{s\in \tilde{S}\:|\:sw< w\}$ and $\mathcal{R}(w)=\{s\in \tilde{S}\:|\:ws< w\}$. In the remainder of this section, we shall write $\mathcal{C}'_{y}\preceq_{\star}\mathcal{C}'_{w}$ as $y\preceq_{\star}w$ and $\mathcal{C}'_{y}\sim_{\star}\mathcal{C}'_{w}$ as $y\sim_{\star}w$ for $\star\in \{L, R, LR\}$. Similar to \cite[Lemma 8.6]{Lu03}, we have the following lemma.
\begin{lem}\label{f3}
Let $w, w'\in W_{C_d}$.
\begin{itemize}
\item[(a)] If $w\preceq_{L} w'$, then $\mathcal{R}(w')\subseteq \mathcal{R}(w)$. If $w\sim_{L} w'$, then $\mathcal{R}(w')=\mathcal{R}(w)$.
\item[(b)] If $w\preceq_{R} w'$, then $\mathcal{L}(w')\subseteq \mathcal{L}(w)$. If $w\sim_{R} w'$, then $\mathcal{L}(w')=\mathcal{L}(w)$.
\end{itemize}
\end{lem}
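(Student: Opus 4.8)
The plan is to transport the proof of \cite[Lemma 8.6]{Lu03} to the unequal-parameter algebra $\mathcal{H}_{C_d}^{1}$ equipped with the weight function $\ell'_{\mathfrak{a}}$. I will prove (a) in detail; part (b) then follows by the left--right symmetric argument, replacing right multiplication by left multiplication, $\mathcal{R}$ by $\mathcal{L}$, and $ws$ by $sw$ throughout. Since set inclusion is transitive and $\preceq_{L}$ is by definition the transitive closure of the one-step relation $\leftarrow_{L}$, it suffices to treat a single step: assuming $w\leftarrow_{L} w'$, that is, $\mathcal{C}'_{w}$ occurs with nonzero coefficient in $\mathcal{C}'_{x}\mathcal{C}'_{w'}$ for some $x\in W_{C_d}$, I will show $\mathcal{R}(w')\subseteq\mathcal{R}(w)$. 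Chaining the resulting inclusions along a sequence realizing $w\preceq_{L} w'$ then yields the first assertion, and applying it to both $w\preceq_{L} w'$ and $w'\preceq_{L} w$ yields the equality in the $\sim_{L}$ case.

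Next I fix $s\in\mathcal{R}(w')$, so $w's<w'$ with $s\in\tilde S$ and hence $\ell'_{\mathfrak{a}}(s)=1$. By \cite[Theorem 6.6(b)]{Lu03} (equivalently, by Lemma \ref{e3}) one has the multiplication rule $\mathcal{C}'_{w'}\mathcal{C}'_{s}=(q+q^{-1})\mathcal{C}'_{w'}$. Writing $X=\mathcal{C}'_{x}\mathcal{C}'_{w'}=\sum_{z}a_{z}\mathcal{C}'_{z}$ with $a_{z}\in\A$ (in fact $a_z\in\mathbb{N}[q,q^{-1}]$ by \eqref{f4}) and $a_{w}\neq 0$, associativity gives
\begin{align*}
X\mathcal{C}'_{s}=\mathcal{C}'_{x}\big(\mathcal{C}'_{w'}\mathcal{C}'_{s}\big)=(q+q^{-1})X.
\end{align*}
On the other hand, I expand the left-hand side termwise using the multiplication formula of \cite[Theorem 6.6]{Lu03}: for $z$ with $zs<z$ one has $\mathcal{C}'_{z}\mathcal{C}'_{s}=(q+q^{-1})\mathcal{C}'_{z}$, while for $z$ with $zs>z$ one has $\mathcal{C}'_{z}\mathcal{C}'_{s}=\mathcal{C}'_{zs}+\sum_{z'\colon z's<z'}M^{s}_{z',z}\mathcal{C}'_{z'}$.

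The decisive step is then a coefficient comparison. Let $u\in W_{C_d}$ satisfy $us>u$. Every $\mathcal{C}'$-basis element occurring in any product $\mathcal{C}'_{z}\mathcal{C}'_{s}$ has $s$ as a right descent: for $zs<z$ the only term is $\mathcal{C}'_{z}$ itself, while for $zs>z$ the terms are $\mathcal{C}'_{zs}$, with $(zs)s=z<zs$, together with the $\mathcal{C}'_{z'}$ for which $z's<z'$. Hence $\mathcal{C}'_{u}$ cannot appear on the left-hand side of the displayed identity, so comparing coefficients of $\mathcal{C}'_{u}$ forces $(q+q^{-1})a_{u}=0$; as $\A$ is a domain and $q+q^{-1}\neq 0$, we get $a_{u}=0$. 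Therefore $a_{z}\neq 0$ implies $zs<z$, i.e. $s\in\mathcal{R}(z)$; taking $z=w$, which is legitimate since $a_{w}\neq 0$, gives $s\in\mathcal{R}(w)$, completing the proof that $\mathcal{R}(w')\subseteq\mathcal{R}(w)$. I expect the only genuinely delicate point to be the appeal to this multiplication rule together with the descent-support property of the structure coefficients $M^{s}_{z',z}$ (nonzero only when $z's<z'$); this is exactly where one needs that the results of \cite[Ch.~6]{Lu03} apply to $\mathcal{H}_{C_d}^{1}$, which is guaranteed here because $\ell'_{\mathfrak{a}}$ is a weight function and, as noted after \eqref{f4}, properties (P1)--(P15) hold for $\mathcal{H}_{C_d}^{1}$.
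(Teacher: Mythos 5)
Your proof is correct, and it shares the paper's overall skeleton: reduce to a single step $\mathcal{C}'_{w}\leftarrow_{L}\mathcal{C}'_{w'}$, then show that every canonical basis element occurring in $\mathcal{C}'_{x}\mathcal{C}'_{w'}$ inherits the right descent $s\in\mathcal{R}(w')$. Where you genuinely diverge is in how that inheritance is established. The paper makes it a one-line citation: since $w's<w'$, the element $\mathcal{C}'_{w'}$ lies in ${}^{s}\mathcal{H}_{C_d}^{1}=\oplus_{y;\,ys<y}\A\,\mathcal{C}'_{y}$, which by \cite[Lemma 8.4(b)]{Lu03} is a \emph{left ideal} of $\mathcal{H}_{C_d}^{1}$; hence $\mathcal{C}'_{x}\mathcal{C}'_{w'}$ stays in this span and $ws<w$ follows at once. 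You instead re-derive exactly this containment by hand: from $\mathcal{C}'_{w'}\mathcal{C}'_{s}=(q+q^{-1})\mathcal{C}'_{w'}$ (your Lemma \ref{e3}) and associativity you obtain the eigenvalue identity $X\mathcal{C}'_{s}=(q+q^{-1})X$ for $X=\mathcal{C}'_{x}\mathcal{C}'_{w'}$, and then use the descent-support property of the right-handed multiplication formula of \cite[Theorem 6.6]{Lu03} (every term of any $\mathcal{C}'_{z}\mathcal{C}'_{s}$ has $s$ as a right descent) together with a coefficient comparison to force $a_{u}=0$ whenever $us>u$. In effect you have inlined a proof of the precise instance of Lusztig's Lemma 8.4(b) that the paper invokes as a black box. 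What your route buys is self-containedness: only the multiplication formula is needed, and the argument makes visible why the restriction to $s\in\tilde{S}$ (so that $\ell'_{\mathfrak{a}}(s)=1$ and the parameter is nontrivial) is the genuine unequal-parameter subtlety here. What the paper's route buys is brevity. Both arguments rest, as you correctly note at the end, on the fact that $\ell'_{\mathfrak{a}}$ is a weight function, so that the results of \cite{Lu03} apply to $\mathcal{H}_{C_d}^{1}$.
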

\begin{proof}
To prove the first assertion of (a), it suffices to prove the case when $\mathcal{C}'_{w}\leftarrow_{L} \mathcal{C}'_{w'}$. We assume that the coefficient of $\mathcal{C}'_{w}$ is nonzero when expanding $\mathcal{C}'_{z}\mathcal{C}'_{w'}$ for some $z\in W_{C_d}$. Let $t\in \mathcal{R}(w')$. Then we have $\mathcal{C}'_{w'}\in {}^{t}\!\hspace{0.5mm}\mathcal{H}_{C_d}^{1}$, where ${}^{t}\!\hspace{0.5mm}\mathcal{H}_{C_d}^{1}=\oplus_{y;yt< y}\A\mathcal{C}'_{y}$. By\cite[Lemma 8.4(b)]{Lu03}, ${}^{t}\!\hspace{0.5mm}\mathcal{H}_{C_d}^{1}$ is a left ideal of $\mathcal{H}_{C_d}^{1}$. Hence $\mathcal{C}'_{z}\mathcal{C}'_{w'}\in {}^{t}\!\hspace{0.5mm}\mathcal{H}_{C_d}^{1}$. From the definition of ${}^{t}\!\hspace{0.5mm}\mathcal{H}_{C_d}^{1}$, we must have $wt< w$, that is, $t\in \mathcal{R}(w)$. Hence $\mathcal{R}(w')\subseteq \mathcal{R}(w)$. The second assertion of (a) follows immediately from the first one. The proof of (b) is entirely similar to that of (a).
\end{proof}

Now we can provide a classification of left, right and two-sided cells for $\tilde{S}^{\imath}(n, d)$ with respect to the canonical basis $\{\{A\}^{\heartsuit}\:|\:A\in \Pi_{n, d}^{\imath}\}$ (cf. \cite[Lemma 2.2 and Corollary 2.3]{Du96}).
\begin{prop}
\label{lem:d=d2=d000ad-i}
For $A, B\in \Pi_{n, d}^{\imath}$, we have the following results.
\begin{enumerate}
\item
$\{A\}^{\heartsuit}\preceq_{L}\{B\}^{\heartsuit}$ if and only if $co(A)=co(B)$ and $w_{A}^{+}\preceq_{L} w_{B}^{+}$. Similarly, $\{A\}^{\heartsuit}\preceq_{R}\{B\}^{\heartsuit}$ if and only if $ro(A)=ro(B)$ and $w_{A}^{+}\preceq_{R} w_{B}^{+}$.
\item
$\{A\}^{\heartsuit}\sim_{L}\{B\}^{\heartsuit}$ if and only if $co(A)=co(B)$ and $w_{A}^{+}\sim_{L} w_{B}^{+}$. Similarly, $\{A\}^{\heartsuit}\sim_{R}\{B\}^{\heartsuit}$ if and only if $ro(A)=ro(B)$ and $w_{A}^{+}\sim_{R} w_{B}^{+}$.
\item
$\{A\}^{\heartsuit}\sim_{LR}\{B\}^{\heartsuit}$ if and only if $w_{A}^{+}\sim_{LR} w_{B}^{+}$.
\end{enumerate}
\end{prop}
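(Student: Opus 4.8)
The plan is to adapt Du's argument \cite[Lemma 2.2, Corollary 2.3]{Du96}, already used in Proposition \ref{lem:d=d2=d000ad}, transporting the cell preorders between $\tilde{S}^{\imath}(n, d)$ and $\mathcal{H}_{C_d}^{1}$ through the structure-constant identity of Lemma \ref{f2}. The starting point is that $\pi(J_{\mu})$ is a nonzero element of the integral domain $\A$, so that identity upgrades to the equivalence $\tilde{g}_{A, B}^{C}\neq 0\Longleftrightarrow \tilde{h}_{w_{A}^{+}, w_{B}^{+}}^{w_{C}^{+}}\neq 0$, valid whenever $co(A)=ro(B)$ so that the product $\{A\}^{\heartsuit}\{B\}^{\heartsuit}$ is defined. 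This equivalence is the bridge turning each elementary relation $\leftarrow_{L}$ or $\leftarrow_{R}$ in $\tilde{S}^{\imath}(n, d)$ into the matching relation for the longest coset elements in $\mathcal{H}_{C_d}^{1}$, and conversely.

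For the forward implications in (1) I would take a defining chain. If $\{A\}^{\heartsuit}\preceq_{L}\{B\}^{\heartsuit}$, write $\{B\}^{\heartsuit}=\{C_1\}^{\heartsuit}, \ldots, \{C_m\}^{\heartsuit}=\{A\}^{\heartsuit}$ with $\{C_{i+1}\}^{\heartsuit}\leftarrow_{L}\{C_i\}^{\heartsuit}$. Each $\{C_i\}^{\heartsuit}$ lies in a $\Hom$-space with fixed source $x_{co(B)}\mathcal{H}_{C_d}^{1}$, so left multiplication preserves the column index and $co(A)=co(B)$; applying the bridge to each step gives $w_{C_{i+1}}^{+}\leftarrow_{L}w_{C_i}^{+}$, whence $w_{A}^{+}\preceq_{L}w_{B}^{+}$. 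The $\preceq_{R}$ statement is symmetric, with $ro$ replacing $co$, and the forward half of (3) is the same computation along a two-sided chain, where no index is preserved globally but each individual step still lifts.

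The hard part will be the reverse implications, i.e. producing a Schur-algebra chain from a Hecke-algebra one. Given $co(A)=co(B)$ and $w_{A}^{+}\preceq_{L}w_{B}^{+}$, Lemma \ref{another-charact-cells} applied to $\mathcal{H}_{C_d}^{1}$ (legitimate since $\tilde{h}_{x, y}^{z}\in \mathbb{N}[q, q^{-1}]$ by \eqref{f4}) yields a single $y\in W_{C_d}$ with $\tilde{h}_{y, w_{B}^{+}}^{w_{A}^{+}}\neq 0$. To reinsert this into Lemma \ref{f2} I must replace $y$ by the longest element $w_{X}^{+}$ of the double coset $W_{ro(A)}\backslash W_{C_d}/W_{ro(B)}$, that is, force the left descent set of the multiplier to contain the generators of $W_{ro(A)}$ and its right descent set those of $W_{ro(B)}$. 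Here positivity is essential: each generator $s$ of $W_{ro(B)}$ satisfies $s w_{B}^{+}<w_{B}^{+}$, so by Lemma \ref{e3} (via $\mathcal{C}'_{w_{B}^{+}}\in x_{ro(B)}\mathcal{H}_{C_d}^{1}$, Lemma \ref{e4}) the element $\mathcal{C}'_{w_{B}^{+}}$ absorbs $\mathcal{C}'_{s}$ from the left; comparing the two ways of expanding $\mathcal{C}'_{y}\mathcal{C}'_{s}\mathcal{C}'_{w_{B}^{+}}$ and using that \eqref{f4} forbids any cancellation, I can adjoin each missing right descent of $W_{ro(B)}$ to $y$ while keeping $\tilde{h}_{\cdot, w_{B}^{+}}^{w_{A}^{+}}\neq 0$, either by passing to $ys$ or to a strictly shorter element. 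The left descents of $W_{ro(A)}$ are adjoined symmetrically, using $s w_{A}^{+}<w_{A}^{+}$, and Lemma \ref{f3} guarantees the descent sets behave monotonically along the procedure so that it terminates at $y=w_{X}^{+}$. Then the bridge gives $\tilde{g}_{X, B}^{A}\neq 0$, hence $\{A\}^{\heartsuit}\preceq_{L}\{B\}^{\heartsuit}$.

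The remaining cases are variations. The reverse part of $\preceq_{R}$ symmetrizes on the opposite side; for (3) I would invoke the two-sided form of Lemma \ref{another-charact-cells} to get $\mathcal{C}'_{y}, \mathcal{C}'_{y'}$ with $\mathcal{C}'_{w_{A}^{+}}$ occurring in $\mathcal{C}'_{y}\mathcal{C}'_{w_{B}^{+}}\mathcal{C}'_{y'}$, then symmetrize $y$ on the left and $y'$ on the right simultaneously and apply Lemma \ref{f2} twice to land $\{A\}^{\heartsuit}$ inside $\{X\}^{\heartsuit}\{B\}^{\heartsuit}\{X'\}^{\heartsuit}$, giving $\{A\}^{\heartsuit}\preceq_{LR}\{B\}^{\heartsuit}$. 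Finally, statement (2) and the equivalence form of (3) are formal: $\sim_{\star}$ is $\preceq_{\star}$ in both directions, and the side conditions $co(A)=co(B)$, $ro(A)=ro(B)$, $w_{A}^{+}\sim_{\star}w_{B}^{+}$ are themselves symmetric, so they follow at once from the preorder statements.
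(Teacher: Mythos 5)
Your overall architecture agrees with the paper's: forward implications via the structure-constant identity of Lemma \ref{f2} (using that $\pi(J_{\mu})$ is a nonzero element of the domain $\A$), reverse implications via positivity \eqref{f4}, Lemma \ref{another-charact-cells}, and absorption of parabolic Kazhdan--Lusztig elements. However, your execution of the crucial symmetrization step --- upgrading a single multiplier $y$ with $\tilde{h}_{y,\, w_{B}^{+}}^{w_{A}^{+}}\neq 0$ to a double-coset longest element $w_{X}^{+}$ --- has a genuine gap. Your iterative procedure replaces $y$ by some $z$ occurring in $\mathcal{C}'_{y}\mathcal{C}'_{s}$; this does secure $s\in \mathcal{R}(z)$, but nothing prevents $z$ from losing right descents secured earlier. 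Lemma \ref{f3} cannot supply the claimed monotonicity: it makes $\mathcal{L}$ monotone under $\preceq_{R}$ and $\mathcal{R}$ monotone under $\preceq_{L}$, whereas your right-multiplication steps would need monotonicity of $\mathcal{R}$ under $\preceq_{R}$, which is false. Concretely, starting from $y=s_1$ and adjoining $s_2$ gives $\mathcal{C}'_{s_1}\mathcal{C}'_{s_2}=\mathcal{C}'_{s_1s_2}$, which has lost the descent $s_1$; adjoining $s_1$ back gives $\mathcal{C}'_{s_1s_2}\mathcal{C}'_{s_1}=\mathcal{C}'_{s_1s_2s_1}+\mathcal{C}'_{s_1}$, and positivity only guarantees that \emph{some} branch keeps $\tilde{h}_{\cdot,\, w_{B}^{+}}^{w_{A}^{+}}\neq 0$ --- possibly the branch $z=s_1$, which returns you to the initial state. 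So the procedure can cycle, and no invariant you cite rules this out.

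The repair is the paper's one-shot version of the same idea: with $\lambda=ro(A)$, $\mu=ro(B)$, pass from $\mathcal{C}'_{w}\mathcal{C}'_{w_{B}^{+}}$ directly to $\mathcal{C}'_{w_{\circ}^{\lambda}}\mathcal{C}'_{w}\mathcal{C}'_{w_{\circ}^{\mu}}\mathcal{C}'_{w_{B}^{+}}$. Lemma \ref{e3} gives $\mathcal{C}'_{w_{\circ}^{\mu}}\mathcal{C}'_{w_{B}^{+}}=c\,\mathcal{C}'_{w_{B}^{+}}$ and $\mathcal{C}'_{w_{\circ}^{\lambda}}\mathcal{C}'_{w_{A}^{+}}=c'\,\mathcal{C}'_{w_{A}^{+}}$ with $c,c'\neq 0$, so by \eqref{f4} the coefficient of $\mathcal{C}'_{w_{A}^{+}}$ stays nonzero; Lemmas \ref{f5} and \ref{e4} identify $\mathcal{C}'_{w_{\circ}^{\lambda}}\mathcal{C}'_{w}\mathcal{C}'_{w_{\circ}^{\mu}}$ as a combination of the elements $\mathcal{C}'_{w_{D}^{+}}$ with $D=(\lambda,w_{D},\mu)\in \Pi_{n,d}^{\imath}$, and Lemma \ref{f2} then converts $\tilde{h}_{w_{D}^{+},\, w_{B}^{+}}^{w_{A}^{+}}\neq 0$ into $\tilde{g}_{D, B}^{A}\neq 0$. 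I also note that for part (3) the paper does not perform a two-sided symmetrization at all: it invokes the analog of \cite[Proposition 18.4]{Lu03} (available since (P1)--(P15) hold, thanks to \eqref{f4}) to produce $x$ with $w_{A}^{+}\sim_{L}x\sim_{R}w_{B}^{+}$, then uses Lemma \ref{f3} together with \cite[(1.2)(i)]{Cur85} to see that $x$ is automatically the longest element of its double coset, hence $x=w_{D}^{+}$, and concludes by part (2). Your two-sided symmetrization of $\mathcal{C}'_{y}\mathcal{C}'_{w_{B}^{+}}\mathcal{C}'_{y'}$ would be a legitimate, and arguably more elementary, alternative to that asymptotic-algebra input --- but only once the symmetrization step itself is repaired as above.
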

\begin{proof}
(1) The ``only if'' part follows from Lemma \ref{f2}.

Conversely, suppose that $co(A)=co(B)=\gamma$ and $w_{A}^{+}\preceq_{L} w_{B}^{+}$. By \eqref{f4} and Lemma \ref{another-charact-cells}, $\mathcal{C}'_{w_{A}^{+}}$ appears with nonzero coefficient in the product $\mathcal{C}'_{w}\mathcal{C}'_{w_{B}^{+}}$ for some $w$. Denote $A=(\lambda,w_{A},\gamma)$ and $B=(\mu,w_{B},\gamma)$ for $\lambda,\mu\in \Lambda^{\imath}(n, d)$. By \eqref{f4} and Lemma \ref{e3}, $\mathcal{C}'_{w_{A}^{+}}$ appears with nonzero coefficient in $\mathcal{C}'_{w_\circ^\lambda}\mathcal{C}'_{w}\mathcal{C}'_{w_\circ^\mu}\mathcal{C}'_{w_{B}^{+}}$. By Lemmas \ref{f5} and \ref{e4}, $\mathcal{C}'_{w_\circ^\lambda}\mathcal{C}'_{w}\mathcal{C}'_{w_\circ^\mu}$ is a linear combination of the elements $\mathcal{C}'_{w_{D}^{+}}$ ($D=(\lambda,w_D,\mu)\in \Pi_{n, d}^{\imath}$). Thus, $\mathcal{C}'_{w_{A}^{+}}$ appears with nonzero coefficient in some product $\mathcal{C}'_{w_{D}^{+}}\mathcal{C}'_{w_{B}^{+}}$. It follows from Lemma \ref{f2} that $\{A\}^{\heartsuit}$ appears with nonzero coefficient in $\{D\}^{\heartsuit}\{B\}^{\heartsuit}$, hence $\{A\}^{\heartsuit}\preceq_{L}\{B\}^{\heartsuit}$. The proof for the second claim on $\preceq_{R}$ is entirely similar.

(2) It follows from (1).

(3) The ``only if'' part follows from Lemma \ref{f2}.

Conversely, suppose that $w_{A}^{+}\sim_{LR} w_{B}^{+}$. By \cite[Proposition 18.4]{Lu03} there exists $x\in W_{C_d}$ such that $w_{A}^{+}\sim_{L} x \sim_{R}w_{B}^{+}$. Denote $co(A)=\gamma$ and $ro(B) =\mu$. By Lemma \ref{f3} and \cite[(1.2)(i)]{Cur85}, we see that $x$ is the longest element in $W_{\mu}x W_{\gamma}$, and hence $x=w_{D}^{+}$ for $D=(\mu,w_D,\gamma)\in \Pi_{n, d}^{\imath}$. By (2) we have $\{A\}^{\heartsuit}\sim_{L}\{D\}^{\heartsuit}\sim_{R}\{B\}^{\heartsuit}$, hence $\{A\}^{\heartsuit}\sim_{LR}\{B\}^{\heartsuit}$.
\end{proof}

\subsection{Description of two-sided cells in $\mathcal{H}_{D_d}$}
We define a {\em symbol} in type $D_{d}$ to be an array of nonnegative integers
\begin{align}\label{g2}
\Lambda={\lambda_1<\lambda_2<\cdots <\lambda_{m} \choose \mu_1<\mu_2<\cdots <\mu_{m}}
\end{align}
such that $\sum_{i=1}^{m}\lambda_{i}+\sum_{j=1}^{m}\mu_{j}=d+m(m-1)$, where $m\in \mathbb{Z}_{\geq 0}$ (cf. \cite[\S5]{Lu79}). We define an equivalence relation $\sim$ on the set of symbols in type $D_{d}$ as
\begin{equation*}  \label{dim:Sirrep-2}
{\lambda_1<\lambda_2<\cdots <\lambda_{m} \choose \mu_1<\mu_2<\cdots <\mu_{m}}\sim {0<\lambda_1+1<\lambda_2+1<\cdots <\lambda_{m}+1 \choose 0<\mu_1+1<\mu_2+1<\cdots <\mu_{m}+1},
\end{equation*}
and
\begin{equation*}  \label{dim:Sirrep-2}
{\lambda_1<\lambda_2<\cdots <\lambda_{m} \choose \mu_1<\mu_2<\cdots <\mu_{m}}\sim  {\mu_1<\mu_2<\cdots <\mu_{m} \choose \lambda_1<\lambda_2<\cdots <\lambda_{m}}.
\end{equation*}
Denote by $[\Lambda]$ the equivalence class of a symbol $\Lambda$ and $\tilde{\Phi}_{d}$ the set of equivalence classes of symbols relative to $\sim$. We make the convention that each symbol in \eqref{g2} with $\lambda_i=\mu_i$ for all $1\leq i\leq m$ should be counted {\em twice}, i.e., it gives rise to two elements of $\tilde{\Phi}_{d}$.

Let $W_{D_d}$ be the Coxeter group of type $D_d$ with simple reflections $s_{0}^{d}$ and $s'_j$ ($1\leq j\leq d-1$). It is known that $W_{D_d}$ can be embedded into $W_{C_d}$ via $s_{0}^{d}\mapsto s_0s_1s_0$ and $s'_j\mapsto s_j$ for $1\leq j\leq d-1$, recalling that $s_i$ ($0\leq i\leq d-1$) are the simple reflections in $W_{C_d}$ (cf. \cite[Example 2.4.4(a)]{GJ11}). We shall identify $W_{D_d}$ with its image in $W_{C_d}$ under the embedding. Thus, $W_{D_d}$ consists of the permutations
\begin{equation*}
\left(\hspace{-1.6mm}
\begin{array}{cccccc}
-d & \cdots & -1&1&\cdots& d\\
-i_d & \cdots & -i_1&i_1&\cdots& i_d
\end{array}
\hspace{-1.5mm}\right)
\end{equation*}
in $W_{C_d}$ such that the number of negative numbers among $i_1,\ldots,i_d$ is even.

Let $\alpha=(\alpha_{m}, \ldots, \alpha_2, \alpha_1)$, $\beta=(\beta_m, \ldots, \beta_2, \beta_1)$ be two partitions such that $\sum_{i=1}^{m}\alpha_{i}+\sum_{j=1}^{m}\beta_{j}=d, \alpha_1\geq 0, \beta_{1}\geq 0$. Let $\alpha'=(\alpha_{m}, \ldots, \alpha_2, \alpha_1,0)$. Let $E_{\alpha, \beta}$ be the representation of $W_{D_{d}}$ obtained by restriction of the representation $E_{\alpha', \beta}$ of $W_{C_d}$, where $E_{\alpha', \beta}$ is the irreducible representation of $W_{C_d}$ associated to the ordered pair of partitions $(\alpha', \beta)$. Then $E_{\alpha, \beta}=E_{\beta, \alpha}$ is irreducible if $\alpha\neq \beta$. If $\alpha=\beta$, $E_{\alpha, \alpha}$ splits into two distinct irreducible $W_{D_{d}}$-modules $E_{\alpha, \alpha}^{I}, E_{\alpha, \alpha}^{II}$. All irreducible representations of $W_{D_{d}}$ are obtained in this way. If $(\alpha, \beta)$ is a pair of partitions as above, with $\alpha\neq \beta$, we define $[\Lambda]\in \tilde{\Phi}_{d}$ by setting $\lambda_i :=\alpha_i+i-1$ $(1\leq i\leq m)$, $\mu_j :=\beta_j+j-1$ $(1\leq j\leq m)$. We then set $E^{\Lambda} :=E_{\alpha, \beta}$. If $\alpha=\beta$, the same formulae define two elements $[\Lambda(I)], [\Lambda(II)]$ of $\tilde{\Phi}_{d}$, and we set $E^{\Lambda(I)} :=E_{\alpha, \alpha}^{I}$, $E^{\Lambda(II)} :=E_{\alpha, \alpha}^{II}$. Thus, we see that there is a one-to-one correspondence between $\tilde{\Phi}_{d}$ and the irreducible representation of $W_{D_{d}}$ $($up to isomorphism$)$ (cf. \cite[\S5]{Lu79}).

By \cite[p.175]{BV82}, we can define a map $\chi$ from the set of symbols in type $C_d$ to that in type $D_d$ by
\begin{equation*}
\chi(\Lambda)={\lambda_1<\lambda_2<\cdots <\lambda_{m+1} \choose 0<\mu_1+1<\cdots <\mu_{m}+1}\text{ for }\Lambda={\lambda_1<\lambda_2<\cdots <\lambda_{m+1} \choose \mu_1<\mu_2<\cdots <\mu_{m}}.
\end{equation*}
For each $w\in W_{D_d}\subset W_{C_d}$, by the Robinson--Schensted algorithm for $\mathfrak{S}_{2d}$ and Proposition \ref{lem:d=d2=d000adadc-i}, we have $PT_{\imath}(w)=par_{\imath}[\Lambda]$ for a unique $[\Lambda]$, where $\Lambda$ is a symbol in type $C_d$. By definition, we have $\chi(\Lambda)\sim\chi(\Lambda')$ as symbols in type $D_d$ if $\Lambda\sim\Lambda'$, where $\Lambda, \Lambda'$ are two symbols in type $C_d$. Therefore, $w$ determines an equivalence class $[\chi(\Lambda)]$ of symbols in type $D_d$. We call $[\chi(\Lambda)]$ the {\em symbol class} attached to $w$, and denote it by $sym(w)$.



We now define an equivalence relation $\approx$ on $\tilde{\Phi}_{d}$. We say
$$\bigg[{\lambda_1<\lambda_2<\cdots <\lambda_{m} \choose \mu_1<\mu_2<\cdots <\mu_{m}}\bigg] \approx  \bigg[{\lambda_1'<\lambda_2'<\cdots <\lambda_{m}' \choose \mu_1'<\mu_2'<\cdots <\mu_{m}'}\bigg]$$
if and only if $$\{\lambda_1, \lambda_2, \ldots, \lambda_{m}, \mu_1, \mu_2, \ldots, \mu_{m}\}=\{\lambda_1', \lambda_2', \ldots, \lambda_{m}', \mu_1', \mu_2', \ldots, \mu_{m}'\}\mbox{ as two sets}.$$
We make the convention that the two elements in $\tilde{\Phi}_{d}$, which arise from a symbol in \eqref{g2} with $\lambda_i=\mu_i$ for all $i$, are not equivalent.

We shall identify $\mathcal{H}_{D_d}$ with a subalgebra of $\mathcal{H}_{C_d}^{1}$, and $\{\mathcal{C}'_{w}\:|\:w\in W_{D_d}\}$ is its Kazhdan--Lusztig basis (cf. \cite[Proposition 2.4.5]{GJ11}). The following proposition gives an explicit description of two-sided cells in $\mathcal{H}_{D_d}$ with respect to the basis.
\begin{prop} $($see \cite[Theorem 18]{BV82}$)$
\label{theor:d=d2=d000adadc-i-20190919}
For any two elements $w, w'\in W_{D_d}$, we have $w\sim_{LR} w'$ $($in $\mathcal{H}_{D_d}$$)$ if and only if $sym(w)\approx sym(w')$.
\end{prop}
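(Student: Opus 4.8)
The plan is to deduce the statement from the type $C$ description in Proposition \ref{theor:d=d2=d000adadc-i}, exploiting the embedding $\mathcal{H}_{D_d}\subset\mathcal{H}_{C_d}^{1}$ together with the diagram automorphism of $W_{D_d}$ induced by conjugation with $s_0$. Write $\sigma(w)=s_0ws_0$ for $w\in W_{D_d}$; this is an automorphism of $W_{D_d}$ (it interchanges $s_0^d$ and $s'_1$ and fixes $s'_j$ for $j\geq 2$). Since $\mathcal{H}_{C_d}^{1}$ is obtained at $p=1$ we have $T_0^{2}=1$, and because $\ell'_{\mathfrak{a}}(s_0)=0$ the element $T_0$ is bar-invariant, whence $\mathcal{C}'_{s_0}=T_0$. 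Thus $\sigma=\mathrm{Ad}(T_0)$ restricts to an algebra automorphism of $\mathcal{H}_{D_d}$ permuting its Kazhdan--Lusztig basis, i.e.\ $T_0\mathcal{C}'_wT_0=\mathcal{C}'_{\sigma(w)}$ for $w\in W_{D_d}$.

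First I would record that for $x,y,z\in W_{D_d}$ the structure constant $\tilde h_{x,y}^{z}$ computed in $\mathcal{H}_{D_d}$ coincides with the one computed in $\mathcal{H}_{C_d}^{1}$: this is immediate from the fact that $\mathcal{H}_{D_d}$ is a subalgebra of $\mathcal{H}_{C_d}^{1}$ whose Kazhdan--Lusztig basis is the restriction of the one of $\mathcal{H}_{C_d}^{1}$ (cf.\ \cite[Proposition 2.4.5]{GJ11}). Consequently $\preceq_{LR}$ in $\mathcal{H}_{D_d}$ refines $\preceq_{LR}$ in $\mathcal{H}_{C_d}^{1}$ on $W_{D_d}$, so every two-sided cell of $W_{D_d}$ is contained in a single two-sided cell of $W_{C_d}$. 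Moreover, since $\mathcal{C}'_{\sigma(w)}=\mathcal{C}'_{s_0}\mathcal{C}'_w\mathcal{C}'_{s_0}$ for $w\in W_{D_d}$, we get $\sigma(w)\sim_{LR}w$ in $W_{C_d}$; hence the intersection of a $W_{C_d}$ two-sided cell with $W_{D_d}$ is a $\sigma$-stable union of $W_{D_d}$ two-sided cells.

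The decisive step is to show that such an intersection is a single $\sigma$-orbit of $W_{D_d}$ cells, namely either one $\sigma$-stable cell or a pair of cells interchanged by $\sigma$. Using the decomposition $\mathcal{H}_{C_d}^{1}=\mathcal{H}_{D_d}\oplus\mathcal{H}_{D_d}T_0$ together with $T_0\mathcal{C}'_w=\mathcal{C}'_{\sigma(w)}T_0$, one reduces the $\preceq_{LR}$-relations of $\mathcal{H}_{C_d}^{1}$ between elements of $W_{D_d}$ to $\preceq_{LR}$-relations of $\mathcal{H}_{D_d}$ twisted by $\sigma$; that there are at most two such cells, swapped by $\sigma$, is governed by Lusztig's cell-and-family theory, available here because (P1)--(P15) hold for $\mathcal{H}_{C_d}^{1}$ (and, being of equal-parameter type $D$, for $\mathcal{H}_{D_d}$), so that \cite[\S18]{Lu03} applies. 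The splitting occurs precisely when the irreducible $W_{C_d}$-representation attached to the cell restricts reducibly to $W_{D_d}$, that is, for the degenerate symbols with $\lambda_i=\mu_i$ giving $E_{\alpha,\alpha}=E_{\alpha,\alpha}^{I}\oplus E_{\alpha,\alpha}^{II}$.

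Finally I would match this with the symbol combinatorics. Under the correspondence with irreducible representations, $\sigma$ acts on symbols by interchanging the two rows, i.e.\ by $E_{\alpha,\beta}\mapsto E_{\beta,\alpha}$; by the defining relations of $\tilde{\Phi}_{d}$ this is the identity on every non-degenerate class and interchanges the two classes $[\Lambda(I)],[\Lambda(II)]$ arising from a degenerate symbol. Combining the type $C$ description $w\sim_{LR}w'\Leftrightarrow PT_{\imath}(w)\approx PT_{\imath}(w')$ (Proposition \ref{theor:d=d2=d000adadc-i}) with $PT_{\imath}(w)=par_{\imath}[\Lambda]$, the definition $sym(w)=[\chi(\Lambda)]$, and the conventions on $\tilde{\Phi}_{d}$ (each degenerate symbol counted twice, its two classes declared non-$\approx$-equivalent), one checks that $sym(w)$ modulo $\approx$ is constant on each $W_{D_d}$ two-sided cell and separates the two cells produced by a splitting, yielding $w\sim_{LR}w'$ in $\mathcal{H}_{D_d}$ iff $sym(w)\approx sym(w')$. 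The main obstacle is exactly the decisive step: proving that a $W_{C_d}$ cell meets $W_{D_d}$ in a single $\sigma$-orbit of cells and that $sym$ modulo $\approx$ is the complete invariant distinguishing them, which is the substance of Barbasch--Vogan's type $D$ analysis \cite[Theorem 18, p.175]{BV82}.
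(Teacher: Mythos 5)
The paper itself contains no proof of this proposition: it is quoted directly from \cite[Theorem 18]{BV82}, whose actual proof runs through primitive-ideal theory ($\tau$-invariants and wall-crossing), not through any Hecke-algebra embedding. Your sketch must therefore stand as an independent proof, and it does not: by your own admission the ``decisive step'' --- that a two-sided cell of $\mathcal{H}_{C_d}^{1}$ meets $W_{D_d}$ in a single $\sigma$-orbit of $W_{D_d}$-cells, and that $sym$ modulo $\approx$ separates the two cells of a split orbit --- is deferred to ``the substance of Barbasch--Vogan's type $D$ analysis,'' i.e.\ to the very theorem being proved, so the argument is circular. The steps you do carry out ($\mathcal{C}'_{s_0}=T_0$, $T_0\mathcal{C}'_wT_0=\mathcal{C}'_{s_0ws_0}$, that each $\mathcal{H}_{D_d}$-cell lies inside an $\mathcal{H}_{C_d}^{1}$-cell whose intersection with $W_{D_d}$ is $\sigma$-stable) are correct but constitute the routine half. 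Note also that the single-$\sigma$-orbit claim is essentially Proposition \ref{theor:characterization of two-sided cells in HCd1} $(=$ \cite[Proposition 2.4.9]{GJ11}$)$, which cannot be invoked here without circularity either, since it describes the cells of $\mathcal{H}_{C_d}^{1}$ \emph{in terms of} the type $D$ cells you are trying to determine.

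There is a second, independent flaw: your matching step conflates two different cell structures on $W_{C_d}$. Proposition \ref{theor:d=d2=d000adadc-i} describes the two-sided cells of the \emph{equal-parameter} algebra $\HH$, whereas your framework necessarily lives in the specialization $\mathcal{H}_{C_d}^{1}$ (for $p\neq 1$ the subalgebra generated by $T_0T_1T_0,T_1,\ldots,T_{d-1}$ is not $\mathcal{H}_{D_d}$), and these two cell partitions of $W_{C_d}$ genuinely differ: the cells of $\mathcal{H}_{C_d}^{1}$ are indexed by $\widehat{\mathcal{SP}}_{d}$, those of $\HH$ by $\mathcal{SP}_{d}^{\imath}$. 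Even granting the equal-parameter description as input, the transfer fails exactly at the split classes. Concretely, take $d=2$ and $w=s_1$, $w'=s_0s_1s_0$, both in $W_{D_2}$: they lie in the same two-sided cell of equal-parameter $W_{C_2}$ (the six-element middle cell of the dihedral group), have equal shapes $PT_{\imath}(w)=PT_{\imath}(w')=(2,2)$, hence the same image $\big[{0<2 \choose 0<2}\big]$ under $\chi$; yet in $W_{D_2}\cong A_1\times A_1$ they lie in \emph{different} two-sided cells, distinguished only by the labels $I$, $II$ attached to the degenerate symbol. Nothing in the data you assemble (Robinson--Schensted shapes, $\chi$, the $\approx$-conventions on $\tilde{\Phi}_{d}$) determines which of the two inequivalent classes $[\Lambda(I)]$, $[\Lambda(II)]$ is $sym(s_1)$ and which is $sym(s_0s_1s_0)$; resolving this $I/II$ ambiguity is precisely the hard, genuinely new input of Barbasch--Vogan's type $D$ theorem, and it cannot be ``checked'' from the type $C$ statement.
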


Let $\mathcal{G}_{W_{D}}$ be the subset of $\tilde{\Phi}_{d}$ consisting of equivalence classes of symbols such that $\lambda_{i}\leq \mu_{i}\leq\lambda_{i+1}$ or $\mu_{i}\leq \lambda_{i}\leq\mu_{i+1}$ for any $i$. Obviously, each equivalence class of $\tilde{\Phi}_{d}$ relative to $\approx$ contains exactly one element of $\mathcal{G}_{W_{D}}$. We denote by $\tilde{\mathcal{P}}_{d}$ the set of partitions of $2d$ with every {\em even} part occurring an even number of times, where we shall count a {\em very even} partition (in which all parts are even) twice and use Roman numerals $I$ and $II$ to label them (cf. \cite[Theorem 5.1.4]{CoMc93}). An element $\lambda\in \tilde{\mathcal{P}}_{d}$ is called {\em special} if $\lambda^{t}\in \mathcal{P}_{d}^{\imath}$; in particular, all very even partitions are special (cf. \cite[\S6.3]{CoMc93}). Denote by $\widetilde{\mathcal{SP}}_{d}$ the set of special partitions in $\tilde{\mathcal{P}}_{d}$. Then there is a bijection between $\mathcal{G}_{W_{D}}$ and $\widetilde{\mathcal{SP}}_{d}$ (cf. \cite[p.80]{Mc96}). By Proposition \ref{theor:d=d2=d000adadc-i-20190919}, we see that there is a bijection between the set of two-sided cells in $\mathcal{H}_{D_d}$ and $\widetilde{\mathcal{SP}}_{d}$.

\subsection{Description of two-sided cells in $\tilde{S}^{\imath}(n, d)$}
We first recall a characterization of two-sided cells in $\mathcal{H}_{C_d}^{1}$. Recall that we have identified $W_{D_d}$ with a subgroup of $W_{C_d}$. We set $\Omega=\{e, s_{0}\}$. Then we have a semidirect product decomposition $W_{C_d}=\Omega\ltimes W_{D_d}$ (cf. \cite[\S2.4.3(a)]{GJ11}). The following proposition gives a characterization of two-sided cells in $\mathcal{H}_{C_d}^{1}$ in terms of those in $\mathcal{H}_{D_d}$.
\begin{prop} $($see \cite[Proposition 2.4.9]{GJ11}$)$
\label{theor:characterization of two-sided cells in HCd1}
Let $\omega_1, \omega_2\in \Omega$ and $x_1, x_2\in W_{D_d}$. Then $\omega_1x_1\preceq_{LR} \omega_2x_2$ $($in $\mathcal{H}_{C_d}^{1}$$)$ if and only if there exists some $\omega\in \Omega$ such that $x_1 \preceq_{LR} \omega x_2\omega^{-1}$ $($in $\mathcal{H}_{D_d}$$)$. Thus, the two-sided cells in $\mathcal{H}_{C_d}^{1}$ are of the form $\Omega\cdot \mathcal{F}\cdot\Omega$, where $\mathcal{F}$ is a two-sided cell in $\mathcal{H}_{D_d}$.
\end{prop}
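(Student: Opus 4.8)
The plan is to exploit the semidirect product decomposition $W_{C_d}=\Omega\ltimes W_{D_d}$ together with the fact that the generator $s_0$ has weight $\ell'_{\mathfrak{a}}(s_0)=0$, which forces $T_{s_0}$ to be a bar-invariant unit. From the quadratic relation at $p=1$ one gets $T_{s_0}^2=1$, hence $\overline{T_{s_0}}=T_{s_0}^{-1}=T_{s_0}$, and moreover $T_{s_0}T_y=T_{s_0 y}$ for every $y\in W_{C_d}$ because the usual correction term $q^{\ell'_{\mathfrak{a}}(s_0)}-q^{-\ell'_{\mathfrak{a}}(s_0)}$ vanishes. Conjugation by $s_0$ realizes the order-two diagram automorphism $\gamma$ of $W_{D_d}$ (it interchanges $s_1$ and $s_0s_1s_0$ and fixes the remaining generators), and $h\mapsto T_{s_0}hT_{s_0}$ is the corresponding algebra automorphism of $\mathcal{H}_{C_d}^1$ restricting to $\mathcal{H}_{D_d}$. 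First I would record a key lemma, call it Lemma~A, stating that the Kazhdan--Lusztig basis of $\mathcal{H}_{C_d}^1$ is compatible with the coset decomposition $\mathcal{H}_{C_d}^1=\mathcal{H}_{D_d}\oplus T_{s_0}\mathcal{H}_{D_d}$: for $x\in W_{D_d}$ one has $\mathcal{C}'_x\in\mathcal{H}_{D_d}$ (this is exactly the compatibility of the two Kazhdan--Lusztig bases, \cite[Proposition 2.4.5]{GJ11}), one has $\mathcal{C}'_{s_0x}=T_{s_0}\mathcal{C}'_x\in T_{s_0}\mathcal{H}_{D_d}$, and $\gamma(\mathcal{C}'_x)=T_{s_0}\mathcal{C}'_xT_{s_0}=\mathcal{C}'_{\gamma(x)}$.

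Granting Lemma~A, the first reduction is to show that $\omega x\sim_{LR}x$ in $\mathcal{H}_{C_d}^1$ for every $\omega\in\Omega$ and $x\in W_{D_d}$. Indeed, since $T_{s_0}\mathcal{C}'_x=\mathcal{C}'_{s_0x}$ and $T_{s_0}\mathcal{C}'_{s_0x}=\mathcal{C}'_x$, the element $\mathcal{C}'_{s_0x}$ occurs in $T_{s_0}\mathcal{C}'_x$ and $\mathcal{C}'_x$ occurs in $T_{s_0}\mathcal{C}'_{s_0x}$, so $x\preceq_L s_0x$ and $s_0x\preceq_L x$, whence $s_0x\sim_L x$ and in particular $\omega x\sim_{LR}x$. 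Consequently $\omega_1x_1\preceq_{LR}\omega_2x_2$ in $\mathcal{H}_{C_d}^1$ is equivalent to $x_1\preceq_{LR}x_2$ in $\mathcal{H}_{C_d}^1$, and the proposition reduces to the assertion that $x_1\preceq_{LR}x_2$ in $\mathcal{H}_{C_d}^1$ holds if and only if $x_1\preceq_{LR}\gamma^k(x_2)$ in $\mathcal{H}_{D_d}$ for some $k\in\{0,1\}$, noting that $\gamma^k(x_2)=\omega x_2\omega^{-1}$ for $\omega=s_0^k$.

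Both implications I would run through the characterization of $\preceq_{LR}$ by a single two-sided product (Lemma~\ref{another-charact-cells}), which applies because the structure constants of $\mathcal{H}_{C_d}^1$, and hence of its subalgebra $\mathcal{H}_{D_d}$, lie in $\mathbb{N}[q,q^{-1}]$ by \eqref{f4}. For the ``if'' direction, a witnessing product $b\,\mathcal{C}'_{\gamma^k(x_2)}\,b'$ inside $\mathcal{H}_{D_d}$ containing $\mathcal{C}'_{x_1}$ becomes, via $\mathcal{C}'_{\gamma^k(x_2)}=T_{s_0}^k\mathcal{C}'_{x_2}T_{s_0}^k$, a product in $\mathcal{H}_{C_d}^1$ containing $\mathcal{C}'_{x_1}$, so $x_1\preceq_{LR}x_2$ there. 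For the ``only if'' direction I would take a product $\beta\,\mathcal{C}'_{x_2}\,\beta'$ in $\mathcal{H}_{C_d}^1$ in whose Kazhdan--Lusztig expansion $\mathcal{C}'_{x_1}$ appears, write $\beta,\beta'\in\mathcal{H}_{D_d}\oplus T_{s_0}\mathcal{H}_{D_d}$, and push every occurrence of $T_{s_0}$ to the right using $T_{s_0}h=\gamma(h)T_{s_0}$; this rewrites the product as $P+QT_{s_0}$ with $P,Q\in\mathcal{H}_{D_d}$, where $P$ is a sum of terms of the shape $u\,\mathcal{C}'_{\gamma^j(x_2)}\,v$ with $u,v\in\mathcal{H}_{D_d}$. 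By Lemma~A the coefficient of $\mathcal{C}'_{x_1}$ in the whole product equals its coefficient in $P$, and by the positivity \eqref{f4} there is no cancellation among the summands of $P$; hence $\mathcal{C}'_{x_1}$ already occurs in some single $u\,\mathcal{C}'_{\gamma^j(x_2)}\,v$, giving $x_1\preceq_{LR}\gamma^j(x_2)$ in $\mathcal{H}_{D_d}$. Reading the resulting equivalence at the level of equivalence classes yields that each two-sided cell of $\mathcal{H}_{C_d}^1$ is $\Omega\cdot\mathcal{F}\cdot\Omega$ for a two-sided cell $\mathcal{F}$ of $\mathcal{H}_{D_d}$, since $\Omega\mathcal{F}\Omega$ automatically absorbs $\gamma(\mathcal{F})=s_0\mathcal{F}s_0$.

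The hard part will be Lemma~A, specifically the identity $\mathcal{C}'_{s_0x}=T_{s_0}\mathcal{C}'_x$ and the attendant splitting of the canonical basis along the two cosets of $W_{D_d}$. The element $T_{s_0}\mathcal{C}'_x$ is bar-invariant with leading term $T_{s_0x}$ and lower terms in $q^{-1}\mathbb{Z}[q^{-1}]$, so by the uniqueness characterization of the Kazhdan--Lusztig basis it will coincide with $\mathcal{C}'_{s_0x}$ once one checks that these lower terms are supported on $\{z:z<s_0x\}$; this is exactly where the weight-zero behaviour of $s_0$ and a Bruhat-order lifting argument enter, and where the compatibility of the type-$C$ and type-$D$ bases must be invoked to guarantee that $\mathcal{C}'_x$ has no terms outside $W_{D_d}$. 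Everything downstream is then a formal manipulation of the preorders using only that $T_{s_0}$ is a bar-invariant involution intertwining the two cosets and that the structure constants are positive.
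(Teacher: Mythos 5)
The paper offers no internal proof of this proposition at all: it is quoted verbatim from \cite[Proposition 2.4.9]{GJ11}, and the surrounding text only assembles the ingredients (the identification of $\mathcal{H}_{D_d}$ inside $\mathcal{H}_{C_d}^{1}$ with matching Kazhdan--Lusztig bases via \cite[Proposition 2.4.5]{GJ11}, and the positivity \eqref{f4}). So your proposal is not an alternative to the paper's argument but a reconstruction of the argument the paper outsources, and it is essentially the correct one: the crossed-product decomposition $\mathcal{H}_{C_d}^{1}=\mathcal{H}_{D_d}\oplus T_{s_0}\mathcal{H}_{D_d}$, the weight-zero relations $T_{s_0}^{2}=1$, $\overline{T_{s_0}}=T_{s_0}$, $T_{s_0}T_y=T_{s_0y}$, the diagram automorphism $\gamma=\mathrm{Ad}(s_0)$ of $W_{D_d}$, and then the two-way transfer of witnessing products through Lemma~\ref{another-charact-cells} using \eqref{f4} to rule out cancellation. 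Your final bookkeeping, showing that a two-sided cell of $\mathcal{H}_{C_d}^{1}$ meeting $\mathcal{F}\subset W_{D_d}$ equals $\mathcal{F}\cup\gamma(\mathcal{F})\cup s_0\mathcal{F}\cup s_0\gamma(\mathcal{F})=\Omega\cdot\mathcal{F}\cdot\Omega$, is also correct. What the paper's route buys is brevity; what yours buys is a self-contained proof using only facts the paper has already stated.

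Two repairs, both minor. First, in proving $s_0x\sim_{L}x$ you multiply by $T_{s_0}$, but the cell preorders only allow multiplication by basis elements; you should note that $T_{s_0}=\mathcal{C}'_{s_0}$ is itself a Kazhdan--Lusztig basis element (it is bar-invariant and equals $T_{s_0}$ on the nose), after which your computation $\mathcal{C}'_{s_0}\mathcal{C}'_x=\mathcal{C}'_{s_0x}$ and $\mathcal{C}'_{s_0}\mathcal{C}'_{s_0x}=\mathcal{C}'_x$ gives $s_0x\sim_L x$ exactly as you say. Second, the step you flag as the hard part, $\mathcal{C}'_{s_0x}=T_{s_0}\mathcal{C}'_x$, needs no Bruhat-order lifting and no support analysis. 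The uniqueness in \cite[Theorem 5.2]{Lu03} is global: a bar-invariant element of $\bigoplus_{y}q^{-1}\Z[q^{-1}]T_y$ must vanish (take a Bruhat-maximal $y$ in its support and compare coefficients), so $\mathcal{C}'_{w}$ is the unique bar-invariant element congruent to $T_w$ modulo $\bigoplus_{y}q^{-1}\Z[q^{-1}]T_y$, with no a priori restriction on which $T_y$ occur. Since $T_{s_0}\mathcal{C}'_x=T_{s_0x}+\sum_{y<x}p_{y,x}T_{s_0y}$ is bar-invariant with $p_{y,x}\in q^{-1}\Z[q^{-1}]$ (these are type $D$ Kazhdan--Lusztig polynomials, using \cite[Proposition 2.4.5]{GJ11}), the identity follows immediately; the support condition you wanted then holds a posteriori, and the same uniqueness argument gives $\gamma(\mathcal{C}'_x)=\mathcal{C}'_{\gamma(x)}$. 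With these two observations your Lemma~A is fully proved and the rest of your argument goes through as written.
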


There is an outer automorphism on $W_{D_d}$ induced by the simple reflection $s_0$, which fixes the two-sided cells corresponding to special partitions which have odd parts, and permutes the very even partitions labelled by $I$ and $II$.

Let $\hat{\mathcal{P}}_{d}$ denote the set of partitions of $2d$ in which the multiplicity of every {\em even} part is even. An element $\lambda\in \hat{\mathcal{P}}_{d}$ is {\em special} if $\lambda^{t}\in \mathcal{P}_{d}^{\imath}$. Let $\widehat{\mathcal{SP}}_{d}$ be the set of special partitions in $\hat{\mathcal{P}}_{d}$. By Proposition \ref{theor:characterization of two-sided cells in HCd1}, we see that there is a one-to-one correspondence between the set of two-sided cells in $\mathcal{H}_{C_d}^{1}$ and $\widehat{\mathcal{SP}}_{d}$.

Recall that for each $A\in \Pi_{n, d}^{\imath}$, in Lemma \ref{longest element-i} we have constructed the element $w_{A}^{+}\in W_{C_d}$ attached to it. Thus, by Propositions \ref{lem:d=d2=d000ad-i}-\ref{theor:characterization of two-sided cells in HCd1}, we can give an approach to determining whether or not two canonical basis elements of $\tilde{S}^{\imath}(n, d)$ lie in the same two-sided cell.

We propose the following conjecture.
\begin{conj}
Assume that $w\in W_{D_d}\subset W_{C_d}$. Then we have $sym(s_0w)=sym(w)$, where the two elements in $\tilde{\Phi}_{d}$, which arise from a symbol in \eqref{g2} with $\lambda_i=\mu_i$ for all $i$, will be regarded as identical, and $sym(s_0w)$ is defined in a manner similar to $sym(w)$.
\end{conj}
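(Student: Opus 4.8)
The plan is to reduce the statement to a clean combinatorial assertion about type-$C$ symbols and then to a local analysis of the Robinson--Schensted shape under the transposition $s_0$. Recall from the paragraph defining $sym$ (just before Proposition~\ref{theor:d=d2=d000adadc-i-20190919}) that $sym$ factors through $PT_{\imath}$: since $par_{\imath}$ is injective on $\Phi_d$ (recalled above, cf. \cite{Mc96}), the partition $PT_{\imath}(w)$ determines a unique type-$C$ symbol class $[\Lambda(w)]$ with $par_{\imath}[\Lambda(w)]=PT_{\imath}(w)$, and then $sym(w)=[\chi(\Lambda(w))]$. Consequently it suffices to prove that $\chi(\Lambda(w))$ and $\chi(\Lambda(s_0w))$ are equivalent in $\tilde{\Phi}_d$, i.e. agree up to the shift relation and the row-swap $\binom{a}{b}\sim\binom{b}{a}$. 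Writing $\chi(\Lambda(w))$ with top row $\lambda(w)$ (the halves of the even entries of the $\beta$-set of $\Lambda(w)$) and bottom row $\{0\}\cup(\mu(w)+1)$ (built from the odd entries), the target becomes the assertion that the \emph{unordered} pair of rows of $\chi(\Lambda(\cdot))$ is an invariant of the pair $\{w,s_0w\}$.

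Next I would pass to the embedding $W_{C_d}\hookrightarrow\mathfrak{S}_{2d}$ used to define $PT_{\imath}$. Under it every $w\in W_{C_d}$ is centrally symmetric, $w(2d+1-i)=2d+1-w(i)$, and $s_0$ acts on the left by interchanging the two central values $d$ and $d+1$; these values occupy positions $P_1$ and $2d+1-P_1$ exchanged by the central reflection, so the two points $(P_1,d+1)$ and $(2d+1-P_1,d)$ form a reflection pair. I would then invoke the standard fact (via $P(s_0w)^{\mathrm{shape}}=P(w^{-1}s_0)^{\mathrm{shape}}$ and the Knuth-locality of right multiplication by an adjacent transposition) that interchanging two consecutive values changes the Robinson--Schensted shape by moving at most one box. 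Thus the $\beta$-set of $\Lambda(s_0w)$ is obtained from that of $\Lambda(w)$ either by no change or by a single \emph{central flip}: one even entry $2a$ is replaced by the odd entry $2a-1$, and one odd entry $2b+1$ by the even entry $2b+2$. In terms of $\chi(\Lambda)$ this flip moves $a$ from the top row to the bottom row and $b+1$ from the bottom row to the top row, fixing all other entries.

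The heart of the argument is to show that this flip is always a genuine row swap (or vacuous). Here central symmetry is essential: because the flipped $\beta$-numbers are forced to be the two \emph{central} ones attached to the reflection pair $(P_1,d+1),(2d+1-P_1,d)$, the symmetry of the $\beta$-set pins the two rows of $\chi(\Lambda(w))$ to coincide off a common set $S$, namely $\lambda(w)=S\cup\{a\}$ and $\{0\}\cup(\mu(w)+1)=S\cup\{b+1\}$. The flip then replaces these by $S\cup\{b+1\}$ and $S\cup\{a\}$, which is exactly the row-swapped symbol, so $\chi(\Lambda(s_0w))$ differs from $\chi(\Lambda(w))$ by $\binom{a}{b}\sim\binom{b}{a}$ (after a shift to equalise sizes) and $sym(s_0w)=sym(w)$. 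In the degenerate case $a=b+1$, equivalently when $s_0$ leaves $PT_{\imath}$ unchanged, the assertion is immediate; and in the very-even case, where the two rows already coincide, the row-swap is trivial and $s_0$ merely interchanges the labels $I$ and $II$ (the outer automorphism recorded after Proposition~\ref{theor:characterization of two-sided cells in HCd1}), which the stated convention identifies.

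I expect the main obstacle to be the box-move analysis of the middle paragraph: controlling precisely which box of $PT_{\imath}(w)$ moves under the central transposition and proving that central symmetry forces the ``two rows differ off a common $S$'' structure. The cleanest route is likely to carry out this step in the language of the domino tableaux of Barbasch--Vogan \cite[p.175]{BV82} rather than with bare insertion, since there $s_0$ acts transparently on the central domino and the passage $\chi$ between type-$C$ and type-$D$ symbols is designed to make exactly this comparison.
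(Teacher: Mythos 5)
You should first note that the paper itself contains no proof of this statement: it is posed as a conjecture and only checked on examples ($d=2,3$, plus a pointer to the $W_{D_4}$ tables in \cite{BV82}), so your proposal has to stand entirely on its own. It does not, because both of its load-bearing steps are unjustified, and the first one is actually false. You invoke as a ``standard fact'' that interchanging two consecutive values changes the Robinson--Schensted shape by moving at most one box. This is not a theorem, and it fails: take $w=38245716\in\mathfrak{S}_8$ (one-line notation) and interchange the consecutive values $4$ and $5$ (they occupy the adjacent positions $4,5$), obtaining $38254716$. Running the paper's insertion algorithm, the shapes are $(3,3,1,1)$ and $(4,2,2)$ respectively, so \emph{two} boxes move. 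The Greene-invariant/``Knuth-locality'' reasoning you sketch only shows that every row partial sum and every column partial sum of the shape changes by at most $1$, and the pair $(3,3,1,1)$, $(4,2,2)$ satisfies all of those inequalities; so that reasoning can never rule out such jumps. Consequently your reduction of the $\beta$-set change to a ``single central flip'' collapses; to salvage it you would need to prove a one-box property specifically for centrally symmetric permutations and the two central values $d,d+1$, which (if it is true at all) is a genuine missing step, not a citation.

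The second, deeper gap is the crux of your argument: the claim that central symmetry ``pins the two rows of $\chi(\Lambda(w))$ to coincide off a common set $S$'', i.e.\ that they differ in exactly one entry. No argument is given --- and none can run as you describe, since $\beta$-numbers are invariants of the shape and are not ``attached'' to the reflection pair of positions, so the phrase ``the flipped $\beta$-numbers are forced to be the two central ones'' has no mathematical content --- and the claim is false as an unconditional statement about $W_{D_d}$: the element $w=[2,-4,-1,3]\in W_{D_4}$, whose image in $\mathfrak{S}_8$ is $25836147$, has $PT_{\imath}(w)=(3,3,1,1)$, hence type-$C$ symbol $\binom{0<1<3}{1<3}$ and $\chi$-image $\binom{0<1<3}{0<2<4}$, whose rows differ in \emph{two} entries. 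So the row-swap conclusion could at best hold conditionally on the shape actually changing under $s_0$, and supplying exactly that implication is the hard content of the conjecture, which your proposal leaves untouched. It is telling that in the two-box example above one nevertheless has equality of $sym$ --- the $\chi$-symbols of $(3,3,1,1)$ and $(4,2,2)$ are row swaps of each other --- which shows the conjectured invariance is subtler than your one-box mechanism and suggests that any real proof must control multi-box moves as well, most plausibly via the domino-tableau technology (Garfinkle, van Leeuwen) that you mention in closing but do not carry out.
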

In fact, if the conjecture is proved, similar to Theorem \ref{theor:charac of cells in Schur algs-i} we can obtain a combinatorial description of two-sided cells in $\tilde{S}^{\imath}(n, d)$.

We illustrate the above conjecture with some examples.
\begin{example}
Assume that $d=2$. \\
(1) We have $PT_{\imath}(s_0)=(2,1,1)=par_{\imath}[{1<2 \choose 0}]$ and $PT_{\imath}(e)=(1,1,1,1)=par_{\imath}[{0<1<2 \choose 1<2}]$. Then we have $sym(s_0)=[{1<2 \choose 0<1}]=[{0<1 \choose 1<2}]=[{0<1<2 \choose 0<2<3}]=sym(e)$.\\
(2) We have $PT_{\imath}(s_0s_1)=(2,2)=par_{\imath}[{0<2 \choose 1}]$ and $PT_{\imath}(s_1)=(2,2)=par_{\imath}[{0<2 \choose 1}]$. Then we have $sym(s_0s_1)=[{0<2 \choose 0<2}]=sym(s_1)$.\\
(3) We have $PT_{\imath}(s_0s_1s_0)=(2,2)=par_{\imath}[{0<2 \choose 1}]$ and $PT_{\imath}(s_1s_0)=(2,2)=par_{\imath}[{0<2 \choose 1}]$. Then we have $sym(s_0s_1s_0)=[{0<2 \choose 0<2}]=sym(s_1s_0)$.\\
(4) We have $PT_{\imath}(s_0s_1s_0s_1)=(4)=par_{\imath}[{0<3 \choose 0}]$ and $PT_{\imath}(s_1s_0s_1)=(3,1)=par_{\imath}[{0<1 \choose 2}]$. Then we have $sym(s_0s_1s_0s_1)=[{0<3 \choose 0<1}]=[{0<1 \choose 0<3}]=sym(s_1s_0s_1)$.

Assume that $d=3$. We have $PT_{\imath}(s_0s_1s_0s_2s_1s_0s_1s_2s_1)=(6)=par_{\imath}[{0<4 \choose 0}]$ and $PT_{\imath}(s_1s_0s_2s_1s_0s_1s_2s_1)=(5,1)=par_{\imath}[{0<1 \choose 3}]$. Then we have $sym(s_0s_1s_0s_2s_1s_0s_1s_2s_1)=[{0<4 \choose 0<1}]=[{0<1 \choose 0<4}]=sym(s_1s_0s_2s_1s_0s_1s_2s_1)$.

Assume that $d=4$. The 192 elements in $W_{D_4}$ are listed on \cite[pp.176-178]{BV82}. We leave it to the reader to check the cases.
\end{example}

\begin{rem}\label{remark-tilde-Schur}
Since the $\tilde{\imath}$-Schur algebra $\tilde{S}^{\imath}(n, d)$ is a specialization of a more general construction in \cite[\S6.1]{LL21} (cf. Remark \ref{more-general-gene}), we can apply their results to our situation.

In \cite[\S6.3]{LL21}, a stabilization algebra $\dot{\mathbb{K}}_{n}^{\imath}$ has been constructed from the family of $\tilde{\imath}$-Schur algebras $\tilde{S}^{\imath}(n, d)$ as $d$ varies. From its construction, it should be possible to define a surjective algebra homomorphism from $\dot{\mathbb{K}}_{n}^{\imath}$ to $\tilde{S}^{\imath}(n, d)$ for each $d$ (cf. \cite[Proposition A.17]{BKLW18}). Assume that $\mathbf{U}_{q}^{\imath}(\mathfrak{g}\mathfrak{l}_{n})$ and $\mathbf{U}_{q}^{\imath}(\mathfrak{s}\mathfrak{l}_{n})$ are the $\imath$quantum groups associated to $\mathfrak{g}\mathfrak{l}_{n}$ and $\mathfrak{s}\mathfrak{l}_{n}$, which are the coideal subalgebras of $\mathbf{U}_{q}(\mathfrak{g}\mathfrak{l}_{n})$ and $\mathbf{U}_{q}(\mathfrak{s}\mathfrak{l}_{n})$, respectively; let $\dot{\mathbf{U}}_{q}^{\imath}(\mathfrak{g}\mathfrak{l}_{n})$ and $\dot{\mathbf{U}}_{q}^{\imath}(\mathfrak{s}\mathfrak{l}_{n})$ are the modified $\imath$quantum groups attached to $\mathbf{U}_{q}^{\imath}(\mathfrak{g}\mathfrak{l}_{n})$ and $\mathbf{U}_{q}^{\imath}(\mathfrak{s}\mathfrak{l}_{n})$; we refer the reader to \cite[\S6.4]{LiW18} and also \cite[\S7.3]{LL21} for the definition of these algebras.

In \cite[Theorem 7.3.2]{LL21}, an algebra isomorphism from $\dot{\mathbf{U}}_{q}^{\imath}(\mathfrak{g}\mathfrak{l}_{n})$ to $\dot{\mathbb{K}}_{n}^{\imath}$ has been established, whose composition with the possible homomorphism from $\dot{\mathbb{K}}_{n}^{\imath}$ to $\tilde{S}^{\imath}(n, d)$ will yield a surjective algebra homomorphism from $\dot{\mathbf{U}}_{q}^{\imath}(\mathfrak{g}\mathfrak{l}_{n})$ to $\tilde{S}^{\imath}(n, d)$. Then, similar to the construction of $\phi_{d}^{\imath}$ in \cite[(6.11)]{LiW18}, we can construct an algebra homomorphism $\tilde{\phi}_{d}^{\imath} :\dot{\mathbf{U}}_{q}^{\imath}(\mathfrak{s}\mathfrak{l}_{n})\rightarrow \tilde{S}^{\imath}(n, d)$.

We conjecture that $\tilde{\phi}_{d}^{\imath}$ sends each canonical basis element of $\dot{\mathbf{U}}_{q}^{\imath}(\mathfrak{s}\mathfrak{l}_{n})$ to a canonical basis element of $\tilde{S}^{\imath}(n, d)$ or zero, and moreover, its kernel is spanned by the canonical basis elements whose images are zero under $\tilde{\phi}_{d}^{\imath}$. If the conjecture holds, we can lift the combinatorial description of two-sided cells in $\tilde{S}^{\imath}(n, d)$ to give a characterization of two-sided cells in $\dot{\mathbf{U}}_{q}^{\imath}(\mathfrak{s}\mathfrak{l}_{n})$, which is similar to the claim of Theorem \ref{characte-two-sided-cells-j} in Section 4.
\end{rem}





\begin{thebibliography}{DDPW08}\frenchspacing

\bibitem[B17]{B17} H.~Bao,
{\em Kazhdan--Lusztig theory of super type $D$ and quantum symmetric pairs},
Represent. Theory {\bf 21} (2017), 247--276.

\bibitem[BKLW18]{BKLW18}
H. Bao, J. Kujawa, Y. Li and W. Wang,
{\em Geometric {S}chur duality of classical type}, (Appendix by Bao, Li and Wang),
Transform. Groups {\bf 23} (2018), 329--389.

\bibitem[BSWW18]{BSWW18} H. Bao, Peng Shan, W. Wang, B. Webster, {\em Categorification of quantum symmetric pairs I}, Quantum Topol. {\bf 9} (2018), 643--714.

\bibitem[BW18]{BW18} H. Bao and W. Wang, {\em A new approach to Kazhdan--Lusztig theory of type $B$ via
quantum symmetric pairs}, Ast\'{e}risque {\bf 402} (2018).

\bibitem[BW18b]{BW18b} H. Bao and W. Wang, {\em Canonical bases arising from quantum symmetric pairs}, Invent. Math. {\bf 213} (2018), 1099--1177.

\bibitem[BW21]{BW21} H. Bao and W. Wang, {\em Canonical bases arising from quantum symmetric pairs of Kac--Moody type}, Comp. Math. {\bf 157} (2021), 1507--1537.

\bibitem[BWW18]{BWW18} H. Bao, W. Wang and H. Watanabe, {\em Multiparameter quantum Schur duality of type $B$}, Proc. Amer. Math. Soc. {\bf 146} (2018), 3203--3216.

\bibitem[BV82]{BV82}
D. Barbasch and D. Vogan,
{\em Primitive ideals and orbital integrals in complex classical groups}, Math. Ann. {\bf 259} (1982), 153--199.

\bibitem[BV83]{BV83}
D. Barbasch and D. Vogan, {\em Primitive ideals and orbital integrals in complex exceptional groups}, J. Algebra {\bf 80} (1983), 350--382.

\bibitem[BB81]{BB81} A. Beilinson and J. Bernstein, {\em Localisation de $\mathfrak{g}$-modules}, (French) C. R. Acad. Sci. Paris S\'{e}r. I Math. {\bf 292} (1981), 15--18.

\bibitem[BK00]{BK00}  A. Berenstein and A. Kirillov, {\em Domino tableaux, Sch\"{u}tzenberger involution, and the symmetric group action}, Discrete Math. {\bf 225} (2000), 15--24.

\bibitem[BK81]{BK81} J.-L. Brylinski and M. Kashiwara, {\em Kazhdan--Lusztig conjecture and holonomic systems}, Invent. Math. {\bf 64} (1981), 387--410.


\bibitem[CoMc93]{CoMc93}
D. Collingwood and W. McGovern, {\em Nilpotent orbits in semisimple Lie algebras}, Van Nostrand Reinhold Mathematics Series. New York, 1993. xiv+186 pp.

\bibitem[CLW20]{CLW20}
W. Cui, L. Luo and W. Wang, {\em Cells in affine $q$-Schur algebras}, Israel J. Math. (to appear), \href{https://arxiv.org/abs/2004.00193}{arXiv:2004.00193}.

\bibitem[Cur85]{Cur85}
C.~Curtis, {\em On {L}usztig's isomorphism theorem for {H}ecke algebras}, J. Algebra {\bf 92} (1985), 348--365.

\bibitem[DDPW08]{DDPW08}
B. Deng, J. Du, B. Parshall and J. Wang, {\em Finite dimensional algebras and quantum groups}, Mathematical Surveys and Monographs, {\bf 150}. American Mathematical Society, Providence, RI, 2008. xxvi+759 pp.

\bibitem[Du92]{Du92} J.~Du,
{\em  Kazhdan--Lusztig bases and isomorphism theorems for $q$-Schur algebras},
Kazhdan--Lusztig theory and related topics (Chicago, IL, 1989), 121--140, Contemp. Math., 139, Amer. Math. Soc., Providence, RI, 1992.

\bibitem[Du96]{Du96} J. Du, {\em Cells in certain sets of matrices}, Tohoku Math. J. {\bf 48} (1996), 417--427.

\bibitem[ES18]{ES18} M. Ehrig and C. Stroppel, {\em Nazarov--Wenzl algebras, coideal subalgebras and categorified skew Howe duality}, Adv. Math. {\bf 331} (2018), 58--142.

\bibitem[FLLLW20]{FLLLW20} Z. Fan, C. Lai, Y. Li, L. Luo and W. Wang, {\em Affine flag varieties and quantum symmetric pairs}, Mem. Amer. Math. Soc. {\bf 265} (2020), no. 1285, v+123 pp.

\bibitem[FLLLW22]{FLLLW22} Z. Fan, C. Lai, Y. Li, L. Luo and W. Wang, {\em Affine Hecke algebras and quantum symmetric pairs}, Mem. Amer. Math. Soc. (to appear), \href{https://arxiv.org/abs/1609.06199v3}{arXiv:1609.06199v3}.

\bibitem[JK81]{JK81}
G. James and A. Kerber, {\em The representation theory of the symmetric group}, Encyclopedia of Mathematics and its Applications, {\bf 16}. Addison-Wesley Publishing Co., Reading, Mass., 1981. xxviii+510 pp.

\bibitem[GJ11]{GJ11} M. Geck and N. Jacon, {\em Representations of Hecke algebras at roots of unity}, Algebra and Applications, {\bf 15}. Springer-Verlag London, Ltd., London, 2011. xii+401 pp.

\bibitem[Gr79]{Gr79} C. Greene, {\em Some partitions associated with a partially ordered set}, J. Combinatorial Theory A {\bf 20} (1979), 69--79.

\bibitem[Jo77]{Jo77} A. Joseph, {\em A characteristic variety for the primitive spectrum of a semisimple Lie algebra}, Non-commutative harmonic analysis, pp. 102--118. Lect. Notes in Math., Vol. {\bf 587}, Springer, Berlin, 1977.

\bibitem[KL79]{KL79}
D.~Kazhdan and G.~Lusztig, {\em Representations of {C}oxeter groups and {H}ecke algebras}, Invent. Math. {\bf 53} (1979), 165--184.

\bibitem[Ko14]{Ko14} S. Kolb, {\em Quantum symmetric Kac--Moody pairs}, Adv. Math. {\bf 267} (2014), 395--469.

\bibitem[LL21]{LL21} C.~Lai and L.~Luo,
{\em Schur algebras and quantum symmetric pairs with unequal parameters}, Int. Math. Res. Not. {\bf 13} (2021), 10207--10259.

\bibitem[Le99]{Le99} G. Letzter, {\em Symmetric pairs for quantized enveloping algebras}, J. Algebra {\bf 220} (1999), 729--767.

\bibitem[Le02]{Le02} G. Letzter, {\em Coideal subalgebras and quantum symmetric pairs}, New directions in Hopf algebras (Cambridge), MSRI publications, {\bf 43}, Cambridge Univ. Press, 2002, pp. 117--166.

\bibitem[LiW18]{LiW18} Y. Li and W. Wang, {\em Positivity vs negativity of canonical bases}, Bull. Inst. Math. Acad. Sin. (N.S.) {\bf 13} (2018), 143--198.

\bibitem[Lu77]{Lu77} G. Lusztig, {\em Irreducible representations of finite classical groups}, Invent. Math. {\bf 43} (1977), 125--175.

\bibitem[Lu79]{Lu79} G. Lusztig, {\em A class of irreducible representations of a Weyl group }, Indag. Math. {\bf 41} (1979), 323--335.

\bibitem[Lu82]{Lu82} G. Lusztig, {\em A class of irreducible representations of a Weyl group II}, Indag. Math. {\bf 44} (1982), 219--226.

\bibitem[Lu84]{Lu84} G. Lusztig, {\em Characters of reductive groups over a finite field}, Annals of Mathematics Studies, Vol. {\bf 107}, Princeton Univ. Press, Princeton, NJ, 1984.

\bibitem[Lu85]{Lu85} G. Lusztig, {\em Cells in affine Weyl groups}, in ``Algebraic groups and related topics", Adv. Stud. Pure Math., Vol. {\bf 6}, North-Holland and Kinokuniya, Tokyo and Amsterdam, 1985, pp. 255--287.

\bibitem[Lu85b]{Lu85b} G. Lusztig, {\em The two-sided cells of the affine Weyl group of type $\tilde{A}_{n}$}, Infinite-dimensional groups with applications (Berkeley, Calif., 1984), 275--283, Math. Sci. Res. Inst. Publ., {\bf 4}, Springer, New York, 1985.

\bibitem[Lu95]{Lu95} G. Lusztig, {\em Quantum groups at $v=\infty$}, Functional analysis on the eve of the 21st century, Vol. 1 (New Brunswick, NJ, 1993), 199--221, Progr. Math., {\bf 131}, Birkh\"{a}user Boston, Boston, MA, 1995.


\bibitem[Lu03]{Lu03} G. Lusztig, {\em Hecke algebras with unequal parameters}, CRM Monograph Series, {\bf 18}. American Mathematical Society, Providence, RI, 2003. vi+136 pp.

\bibitem[Lu10]{Lu10} G. Lusztig, {\em Introduction to quantum groups}, Reprint of the 1994 edition. Modern Birkh\"{a}user Classics. Birkh\"{a}user/Springer, New York, 2010. xiv+346 pp.

\bibitem[McG03]{McG03} K. McGerty, {\em Cells in quantum affine $\mathfrak{s}\mathfrak{l}_{n}$}, Int. Math. Res. Not. {\bf 24} (2003), 1341--1361.

\bibitem[McG12]{McG12} K. McGerty, {\em On the geometric realization of the inner product and canonical basis for
quantum affine $\mathfrak{s}\mathfrak{l}_{n}$}, Algebra and Number Theory {\bf 6} (2012), 1097--1131.

\bibitem[Mc96]{Mc96} W. McGovern, {\em Left cells and domino tableaux in classical Weyl groups}, Comp. Math. {\bf 101} (1996), 77--98.

\bibitem[Mc99]{Mc99} W. McGovern, {\em On the Spaltenstein--Steinberg map for classical Lie algebras}, Comm. in Algebra {\bf 27} (1999), 2979--2993.

\bibitem[N96]{N96} M. Noumi, {\em Macdonald's symmetric polynomials as zonal spherical functions on some quantum homogeneous spaces}, Adv. Math. {\bf 123} (1996), 16--77.

\bibitem[NS95]{NS95} M. Noumi and T. Sugitani, {\em Quantum symmetric spaces and related $q$-orthogonal polynomials}, Group theoretical methods in physics (Toyonaka, 1994), 28--40, World Sci. Publ., River Edge, NJ, 1995.

\bibitem[NDS97]{NDS97} M. Noumi, M. Dijkhuizen and T. Sugitani, {\em Multivariable Askey--Wilson polynomials and quantum complex Grassmannians}, Special functions, $q$-series and related topics (Toronto, ON, 1995), 167--177, Fields Inst. Commun., {\bf 14}, Amer. Math. Soc., Providence, RI, 1997.


\bibitem[Shi86]{Shi86}
J. Shi, {\em The Kazhdan--Lusztig cells in certain affine Weyl groups}, Lect. Notes in Math. {\bf 1179}, Springer, 1986.

\bibitem[Sp82]{Sp82} T. A. Springer, {\em Quelques applications de la cohomologie d'intersection}, Bourbaki Seminar, Vol. {\bf 589}, Ast\'{e}risque, 92-93, Soc. Math. France, Paris, 1982, pp. 249--273.

\bibitem[St96]{St96} J. Stembridge, {\em Canonical bases and self-evacuating tableaux}, Duke Math. J. {\bf 82} (1996), 585--606.

\bibitem[W21]{W21} W.~Wang,
{\em Quantum symmetric pairs}, Proceedings of ICM2022, \href{https://arxiv.org/abs/2112.10911}{arXiv:2112.10911}.

\bibitem[Xi94]{Xi94}
N. Xi, {\em Representations of affine Hecke algebras}, Lecture Notes in Mathematics, {\bf 1587}. Springer-Verlag, Berlin, 1994. viii+137 pp.


\end{thebibliography}
\end{document}